\newtheorem{Theorem}{Theorem}[section]
\newtheorem{Proposition}[Theorem]{Proposition}
\newtheorem{Lemma}[Theorem]{Lemma}
\newtheorem*{Claim}{Claim}
\theoremstyle{definition}
\newtheorem{Definition}[Theorem]{Definition}
\newtheorem{remark}[Theorem]{Remark}
\newcommand{\rca}{\mathsf{RCA}_0}
\newcommand{\wkl}{\mathsf{WKL}_0}
\newcommand{\aca}{\mathsf{ACA}_0}
\newcommand{\pa}{\mathsf{PA}}
\newcommand{\pica}{\Pi^1_1\mbox{-}\mathsf{CA}_0}
\DeclareMathOperator{\dom}{\mathrm{dom}}
\DeclareMathOperator{\ran}{\mathrm{ran}}
\newcommand{\lscdom}{{\rm dom}}
\newcommand{\lscsupp}{{\rm supp}}
\newcommand{\andd}{\wedge}
\newcommand{\la}{\langle}
\newcommand{\ra}{\rangle}
\newcommand{\da}{{\downarrow}}
\newcommand{\imp}{\rightarrow}
\newcommand{\Imp}{\Rightarrow}
\newcommand{\biimp}{\leftrightarrow}
\newcommand{\smf}{\smallfrown}
\newcommand{\Nb}{\mathbb{N}}
\newcommand{\Qb}{\mathbb{Q}}
\newcommand{\Rb}{\mathbb{R}}
\newcommand{\fnc}{f}
\newcommand{\fvp}{\mathrm{FVP}}
\newcommand{\lvp}{\mathrm{LVP}}
\newcommand{\sel}{(}
\newcommand{\ser}{)}
\newcommand{\ucode}[1]{\stackrel {#1}\rightharpoonup}
\newcommand{\vcode}[1]{\stackrel {#1}\rightharpoondown}
\newcommand{\fcode}[1]{\stackrel {#1}\to}
\newcommand{\ballsub}{\subsetplus}
\newcommand{\secsub}{\sqsubset}
\newcommand{\secseq}{\sqsubseteq}
\newcommand{\BaireSet}{\Nb^{<\Nb}}
\newcommand{\peq}{\preccurlyeq}
\newcommand{\ignore}[1]{}
\def\compax{{\rm CA}}
\newcommand{\ball}[2]{B_{#2}(#1)}
\newcommand{\rest}{{\upharpoonright}}
\newcommand{\MC}[1]{\mathcal{#1}}
\newcommand{\spc}[1]{\mathcal{#1}}
   \def\MR#1{}
\title[Ekeland's principle in weak and strong systems]{Ekeland's variational principle in weak and strong systems of arithmetic}
\author{David Fern\'andez-Duque}
\address{Department of Mathematics\\
Ghent University\\
Krijgslaan 281 S22\\
B-9000 Ghent\\
Belgium}
\email{David.FernandezDuque@UGent.be}
\author{Paul Shafer}
\address{
School of Mathematics\\
University of Leeds\\
Leeds\\
LS2 9JT\\
UK}
\email{p.e.shafer@leeds.ac.uk}
\urladdr{\url{http://www1.maths.leeds.ac.uk/~matpsh/}}
\author{Keita Yokoyama}
\address {School of Information Science\\ Japan Advanced Institute of Science and Technology\\
1-1 Asahidai, Nomi, Ishikawa 923-1292, Japan}
\email{y-keita@jaist.ac.jp}
\urladdr{\url{http://www.jaist.ac.jp/~y-keita/}}
\subjclass[2020]{Primary 03B30, 03F35, 03F60, 58E30}
\keywords{computability theory, reverse mathematics, second-order arithmetic, variational principles}
\date{\today}
\begin{document}

\begin{abstract}
We analyze Ekeland's variational principle in the context of reverse mathematics. We find that that the full variational principle is equivalent to $\pica$, a strong theory of second-order arithmetic, while natural restrictions (e.g.~to compact spaces or to continuous functions) yield statements equivalent to weak K\"onig's lemma ($\wkl$) and to arithmetical comprehension ($\aca$).  We also find that the localized version of Ekeland's variational principle is equivalent to $\pica$, even when restricted to continuous functions.  This is a rare example of a statement about continuous functions having great logical strength.
\end{abstract}

\maketitle

\section{Introduction}

The field of {\em reverse mathematics}, introduced by H.\ Friedman~\cite{Friedman}, aims to identify the minimal foundational assumptions required to prove specific results from several mathematical fields, including mathematical analysis.
There are many advantages to determining these minimal assumptions, among them aiding in extracting computational content from theorems whose original proofs were non-constructive.
It is also desirable from a methodological perspective to avoid strong set-theoretic assumptions when possible, and indeed advances in reverse mathematics have shown us that a large portion of known mathematics can be carried out within a relatively small fragment of second-order arithmetic \cite{SimpsonSOSOA}.
Second-order arithmetic extends the language of Peano arithmetic by adding variables and quantifiers ranging over sets of natural numbers (see \S\ref{SecSSOA}); this is sufficient to formalize many familiar concepts from analysis, including real numbers (and, more generally, points in a complete separable metric space, also called a {\em Polish space}), continuous functions, and open and closed sets (see \S\ref{sec-MetricDefs}).
One can then ask which axioms are needed to prove (say) the Stone--Weierstrass theorem in this framework.

Although there are many exceptions, a surprisingly large portion of the theorems analyzed are equivalent to one of a handful of systems of second-order arithmetic. These axiomatic systems are always assumed to extend the `base theory' of reverse mathematics, {\em recursive comprehension} ($\rca$), and the ones we consider are {\em weak K\"onig's lemma} ($\wkl$), {\em arithmetical comprehension} ($\aca$), and {\em $\Pi^1_1$-comprehension} ($\pica$), listed in strictly increasing order of strength.
Each of $\rca$, $\aca$ and $\pica$ include axioms asserting that sets of the form $ \{ n \in \Nb : \varphi(n)\}$ exist, where in the case of $\rca$, $\varphi$ must express a computable predicate; in the case of $\aca$, $\varphi$ may contain arbitrary quantifiers over natural numbers (but not over sets of natural numbers); and in the case of $\pica$, $\varphi$ is of the form $ \forall X \psi (n,X)$, where $X$ is a second-order variable and $\psi(n,X)$ contains no additional second-order quantifiers. Meanwhile, the theory $\wkl$ asserts that any infinite binary tree has an infinite branch.

It is known, for example, that the Baire category theorem is provable in $\rca$, that the Heine--Borel theorem is equivalent to $\wkl$, and that the Stone--Weierstrass theorem is equivalent to $\aca$.  As we will see throughout the text, there are many more examples of theorems equivalent to these theories.  On the other hand, statements equivalent to $\pica$ are much more difficult to come by.  One example is the Cantor--Bendixson theorem, stating that any closed subset of a Polish space can be written as the union of a countable set and a perfect set.

From a logical perspective, there is a vast gap in strength between the systems $\aca$ and $\pica$.  $\aca$ may be thought of as the second-order analog of the familiar first-order system of Peano arithmetic ($\pa$).  Indeed, $\aca$ is conservative over $\pa$, meaning that for every first-order statement $\varphi$ (i.e., for every $\varphi$ that may contain quantifiers over natural numbers but not over sets of natural numbers), $\varphi$ is provable in $\aca$ if and only if it is provable in $\pa$.  Both $\aca$ and $\pa$ have proof-theoretic ordinal $\varepsilon_0$.  In computational terms, $\aca$ is characterized by the existence of Turing jumps.  That is, over $\rca$, $\aca$ is equivalent to the statement ``for every set $X$, the Turing jump of $X$ is also a set.''  $\pica$, on the other hand, is a squarely impredicative system whose proof-theoretic ordinal is far above $\Gamma_0$.  In fact, its proof-theoretic ordinal is far above even the Bachmann--Howard ordinal.  In computational terms, $\pica$ is characterized by the existence of hyperjumps.  That is, over $\rca$, $\pica$ is equivalent to the statement ``for every set $X$, the hyperjump of $X$ is also a set.''  See~\cite{SimpsonSOSOA} for analyses of the Heine--Borel, Stone--Weierstrass, and Cantor--Bendixson theorems and for characterizations of $\aca$ and $\pica$ in terms of jumps and hyperjumps, and see~\cite{Pohlers1998} for an ordinal analysis of $\pica$.

Our goal in this article is to study Ekeland's variational principle~\cite{ekeland1974} in the context of reverse mathematics.  This principle states that under certain conditions, lower semi-continuous functions on complete metric spaces always attain `approximate minima,' which we call {\em critical points}.  In his original paper~\cite{ekeland1974} and the survey~\cite{ekeland1979}, Ekeland provides many applications of his variational principle, centered around optimization problems concerning minimal surfaces, partial differential equations, geodesics, the geometry of Banach spaces, control theory, and more.  Ekeland's variational principle has been studied extensively, leading to many variants and extensions (see e.g.~\cites{Georgiev1988,bosch2007}).  The variational principle can also be used to give an easy proof of Caristi's fixed point theorem~\cite{caristi1976}, whose logical strength we analyze in forthcoming work~\cites{LC17Talk, PLS2017Talk, ASL18Talk}.

{\em A priori}, an analysis of Ekeland's variational principle in reverse mathematics is a natural and interesting project.
Ekeland's variational principle is a well-known and important result, and understanding its computational content could lead to algorithms for approximating critical points, or at least to determining that no such algorithm exists.
From a technical perspective, lower semi-continuous functions have not yet received much attention in reverse mathematics, and developing a theory of these functions in second-order arithmetic is an interesting endeavor on its own (see \S\ref{secSemiCont}).

However, {\em a posteriori} the analysis of Ekeland's variational principle is even more interesting and quite surprising: as we will see, natural restrictions of the result (e.g.~to compact spaces, to continuous $f$) yield statements equivalent over $\rca$ to each of $\wkl$, $\aca$, and $\pica$, including what is, to the best of our knowledge, the first statement about continuous functions stemming from analysis that is equivalent to $\pica$.

Before diving into formal systems, let us discuss the variational principle in more detail and sketch a proof.
Let $\overline \Rb = \Rb \cup \{\pm \infty\}$, where we stipulate that $\sup \varnothing = -\infty$ and $\inf \varnothing = \infty$.
If $\spc X$ is a metric space, recall that a function $\fnc \colon \spc X \to \overline \Rb$ is {\em lower semi-continuous} if for every $x\in \spc X$ and every $\lambda < f(x)$, there is a $\delta > 0$ such that whenever $d(x,y) < \delta$, it follows that $\fnc (y) > \lambda$. The notion of an {\em upper semi-continuous} function is defined dually. Clearly, a function that is both upper and lower semi-continuous is continuous.  Additionally, recall the following well-known characterization of semi-continuity.

\begin{Lemma}\label{LemmSCF}
Let $\spc X$ be a metric space.
A function $\fnc \colon \spc X \imp \overline \Rb$ is
\begin{enumerate}[label=(\alph*)]
\item lower semi-continuous if and only if $\{(x, y) \in \spc X \times \Rb : \fnc(x) \leq y\}$ is closed;
\item upper semi-continuous if and only if $\{(x, y) \in \spc X \times \Rb : \fnc(x) \geq y\}$ is closed.
\end{enumerate}
\end{Lemma}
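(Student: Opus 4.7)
The plan is to prove (a) directly from the definition of lower semi-continuity, showing that the $\varepsilon$-$\delta$ condition on $\fnc$ matches openness of the complement $E^c$ of $E := \{(x, y) \in \spc X \times \Rb : \fnc(x) \leq y\}$ in the product topology on $\spc X \times \Rb$. Part (b) will then follow immediately by applying (a) to $-\fnc$, because $g$ is upper semi-continuous exactly when $-g$ is lower semi-continuous, and the map $(x,y) \mapsto (x,-y)$ is a self-homeomorphism of $\spc X \times \Rb$ that carries the set associated to $-\fnc$ in (a) onto the set associated to $\fnc$ in (b).

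For the forward direction of (a), I would pick $(x, y) \in E^c$, so $\fnc(x) > y$, and hunt for a product-open box around $(x, y)$ disjoint from $E$. The key move is to insert a finite buffer $\eta > 0$ with $y + \eta < \fnc(x)$ and then apply the lower semi-continuity condition at $\lambda := y + \eta$ to obtain $\delta > 0$ such that $d(x, x') < \delta$ implies $\fnc(x') > y + \eta$; the box $\ball{x}{\delta} \times (y - \eta, y + \eta)$ then sits inside $E^c$. For the converse, given $\lambda < \fnc(x)$ with $\lambda \in \Rb$, I would observe $(x, \lambda) \in E^c$ and read off the required $\delta$ from any product-open box around $(x,\lambda)$ contained in $E^c$; the case $\lambda = -\infty$ reduces to this by first replacing $\lambda$ with any finite $\mu$ strictly between $-\infty$ and $\fnc(x)$.

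The main obstacle is really just bookkeeping around the extended real values: since $E$ is formed inside $\spc X \times \Rb$ rather than $\spc X \times \overline \Rb$, the natural formula $\eta := (\fnc(x) - y)/2$ breaks down when $\fnc(x) = +\infty$, where one takes $\eta := 1$ instead, and symmetrically when $\lambda = -\infty$ in the converse direction one must pass through a finite intermediate value. Once these corner cases are dispatched, the argument is the standard topological characterization of semi-continuity and should transfer without difficulty to the base theory $\rca$ of the paper, where closed sets are coded as complements of countable unions of basic open balls.
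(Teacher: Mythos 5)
Your argument is correct, and it is the standard one. Note that the paper does not actually prove Lemma~\ref{LemmSCF}: it is stated as a recalled, well-known fact, with the real work deferred to the formalized analogue, Proposition~\ref{prop-LSCandGraph}. Your forward direction (buffer $\eta$ with $y+\eta<\fnc(x)$, apply semi-continuity at $\lambda=y+\eta$, box $\ball{x}{\delta}\times(y-\eta,y+\eta)$) is exactly the idea that Proposition~\ref{prop-LSCandGraph}\ref{itLSCandGraphOne} implements via codes, where the set $\{\la x,y\ra:\fnc(x)>y\}$ is exhibited as $\Sigma^0_1$ and hence open. Your handling of the corner cases $\fnc(x)=+\infty$ and $\lambda=-\infty$ is right. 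The one place where I would push back is your closing remark that the argument ``should transfer without difficulty'' to $\rca$: the forward direction does, but the converse does not transfer verbatim, which is why Proposition~\ref{prop-LSCandGraph}\ref{itLSCandGraphTwo} imposes extra hypotheses on the closed set (upward closure in the second coordinate and the provable existence of the fiberwise infima) before recovering a coded lower semi-continuous function. For the classical lemma as stated, though, your proof is complete.
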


If $\fnc \colon \spc X \to \Rb_{\geq 0}$ is continuous but $\spc X$ is not compact, then $\fnc$ may not attain its infimum, and since every continuous function is also lower semi-continuous, the same holds of lower semi-continuous functions. Nevertheless, Ekeland's variational principle states that $\fnc$ has points that are in a sense approximate local minima, and we call these points {\em $\varepsilon$-critical points}. For the sake of brevity, we will often refer to lower semi-continuous functions $\fnc \colon \spc X \imp \overline{\Rb}_{\geq 0}$ as {\em potentials}.

\begin{Definition}\label{DefCritical}
Let $\spc X$ be a metric space, let $\varepsilon>0$, and let $\fnc \colon \spc X \imp \overline{\Rb}_{\geq 0}$ be a potential.  A point $x_* \in \spc X$ is an \emph{$\varepsilon$-critical point of $\fnc$} if
\begin{align*}
(\forall y \in \spc X)[( \varepsilon d(x_*, y)  \leq \fnc(x_*) -  \fnc(y)  ) \imp y = x_*].
\end{align*}
\end{Definition}

Ekeland's variational principle then reads as follows.

\begin{Theorem}[Ekeland \cite{ekeland1974}]\label{TheoEVP}
If $\spc X$ is a complete metric space and $\fnc \colon \spc X \imp \overline{\Rb}_{\geq 0}$ is any potential, then for every $\varepsilon > 0$, $\fnc$ has an $\varepsilon$-critical point $x_\ast$.

Moreover, for any $x_0 \in \spc X$, we can choose $x_\ast$ so that
\begin{equation}\label{localCondition}
\varepsilon d( x_ 0, x_\ast) \leq \fnc (x_0) - \fnc (x_\ast) .
\end{equation}
\end{Theorem}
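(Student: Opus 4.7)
The plan is to iteratively construct an approximating sequence $(x_n)_{n \in \Nb}$ starting from $x_0$ and let $x_\ast$ be its limit. Introduce the preorder $y \peq x$ iff $\varepsilon d(x,y) \leq \fnc(x) - \fnc(y)$, and for each $x \in \spc X$ set $S(x) = \{y \in \spc X : y \peq x\}$. Two facts about this preorder will be used repeatedly: it is transitive, by the triangle inequality applied to $d$; and each $S(x)$ is closed, since Lemma \ref{LemmSCF} makes $y \mapsto \varepsilon d(x,y) + \fnc(y)$ lower semi-continuous. We may also assume $\fnc(x_0) < \infty$, since otherwise \eqref{localCondition} is automatic and the construction below can be restarted from any point of finite potential.

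The recursion is: given $x_n$, pick $x_{n+1} \in S(x_n)$ with
\[
\fnc(x_{n+1}) \leq \inf_{y \in S(x_n)} \fnc(y) + 2^{-n}.
\]
First I would show that $(x_n)$ is Cauchy. The defining inequality $x_{n+1} \peq x_n$ gives $\varepsilon d(x_n, x_{n+1}) \leq \fnc(x_n) - \fnc(x_{n+1})$, so $(\fnc(x_n))$ is non-increasing and nonnegative, and summing telescopes to a quantity bounded by $\varepsilon^{-1} \fnc(x_0)$; thus $\sum_n d(x_n, x_{n+1}) < \infty$ and $(x_n)$ is Cauchy. By completeness it converges to some $x_\ast \in \spc X$. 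Transitivity of $\peq$ gives $x_n \in S(x_0)$ for all $n$, and closedness of $S(x_0)$ then yields $x_\ast \in S(x_0)$, which is precisely the local condition \eqref{localCondition}.

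The main step is to verify that $x_\ast$ is $\varepsilon$-critical. Suppose $y \in S(x_\ast)$. The same transitivity-plus-closure argument, applied with $x_n$ in place of $x_0$, gives $x_\ast \in S(x_n)$, and then by transitivity $y \in S(x_n)$ for every $n$. Consequently $\fnc(y) \geq \fnc(x_{n+1}) - 2^{-n}$. Since $(\fnc(x_n))$ is non-increasing and $\fnc$ is lower semi-continuous at $x_\ast$, letting $n \to \infty$ gives $\fnc(y) \geq \lim_n \fnc(x_n) \geq \fnc(x_\ast)$. Combined with $\varepsilon d(x_\ast, y) \leq \fnc(x_\ast) - \fnc(y)$ coming from $y \in S(x_\ast)$, this forces $d(x_\ast, y) = 0$, so $y = x_\ast$.

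The step I expect to be most delicate is the recursive selection of $x_{n+1}$ within $2^{-n}$ of the infimum of $\fnc$ on the closed set $S(x_n)$. Classically this is immediate, but from a reverse-mathematical viewpoint it requires some form of countable choice applied to a predicate whose complexity reflects that of the potential and the presentation of $\spc X$. This is presumably where axiomatic strength enters, and navigating it carefully should shape the analysis carried out in the rest of the paper.
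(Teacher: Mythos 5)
Your proposal is correct and follows essentially the same route the paper sketches in the introduction (the Br\'ezis--Browder/Du argument): the order $\peq_\varepsilon$, the sets $S(x)$, the recursive choice of $x_{n+1}$ within $2^{-n}$ of $\inf \fnc[S(x_n)]$, the telescoping/Cauchy argument, and lower semi-continuity at the limit. You have correctly filled in the details the paper leaves to the reader, and your closing remark about where the choice of $x_{n+1}$ hides the logical strength matches the paper's own diagnosis of why the formalization is delicate.
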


In the literature, it is often assumed that $\fnc(x_0) \leq \varepsilon + \inf (f)$, in which case \eqref{localCondition} is replaced by $d(x_0, x_\ast) \leq 1$.
We refer to Theorem \ref{TheoEVP} without \eqref{localCondition} as the {\em free variational principle} ($\fvp$) and with \eqref{localCondition} as the {\em localized variational principle} ($\lvp$); note that Ekeland instead calls these the {\em weak} and {\em strong} principles, respectively.

Let us sketch a proof similar to that presented by Du~\cite{Du16}, which is based on one given by Br\'ezis and Browder~\cite{Brezis1976} in a more general order-theoretic setting.
Suppose that $\spc X$ is a complete metric space, let $x_0\in \spc X$, and let $\fnc \colon \spc X \imp \overline{\Rb}_{\geq 0}$ be any potential. For $\varepsilon > 0$, define a partial order $\peq_\varepsilon$ on $\spc X$ given by $y \peq_\varepsilon x$ if and only if $ \varepsilon d(x,y) \leq \fnc (x) -\fnc (y)$. For $x\in \spc X$, define $S(x) = \{y \in \spc X : y\peq_\varepsilon x\}$. Then we construct a sequence $(x_n : n<\omega)$ by letting $x_{n+1} \in S(x_n) $ be such that $f(x_{n+1}) < \inf f[ S(x_n) ] + 2^{-n}$.
It is not too hard to check that $(x_n : n<\omega)$ is Cauchy and hence converges to some $x_\ast \in \spc X$, and, using the fact that $\fnc$ is lower semi-continuous, that $\fnc(x_\ast) \leq \fnc(x_n)$ for all $n<\omega$. From this it readily follows that $x_\ast$ is $\varepsilon$-critical and satisfies \eqref{localCondition}; we leave the details to the reader.

As it turns out, there are some issues when attempting to formalize this argument in second-order arithmetic. The sets $S(x_n)$ are closed, and finding infima of lower semi-continuous functions on closed sets requires $\pica$ in general. It is easier to formalize the argument when $S(x_n)$ is open {\em and} $\fnc$ is continuous, and we can achieve this by replacing $S(x_n)$ by a suitable open set.  However, this modification typically causes \eqref{localCondition} to fail.
In fact, we obtain that the $\fvp$ for continuous functions is equivalent to $\aca$ (\S\ref{SecFormalEVP}), while the $\lvp$ is equivalent to $\pica$, even when $f$ is assumed to be continuous (\S\ref{secLVP}).  This last result is particularly interesting because it gives an example of a statement about continuous functions that is equivalent to $\pica$; typically the mathematics of continuous functions can be carried out in $\aca$ and below.

By tweaking other parameters in the statement, for example taking $\spc X$ to be compact or even to be a specific space such as the unit interval, we obtain several variants of the variational principle, each equivalent to one of $\wkl$, $\aca$ or $\pica$.
The equivalence between these theories and variational principles is established over $\rca$, and hinges on known equivalences, e.g.~the equivalence between $\aca$ and the fact that every increasing sequence of rationals bounded above has a supremum. The main reversals are found in \S\ref{secReversals}.
A full summary of our main results is found in \S\ref{secConc}.

\section{Subsystems of second-order arithmetic}\label{SecSSOA}

In this section we review the subsystems of second-order arithmetic we will be working with, loosely following the presentation of Simpson \cite{SimpsonSOSOA}.
Let us first settle conventions regarding syntax.  The language of second-order arithmetic consists of first-order variables intended to range over natural numbers; second-order variables intended to range over sets of natural numbers; constant symbols ${\tt 0}$ and ${\tt 1}$; $2$-ary function symbols $+$ and $\times$; and $2$-ary relation symbols $=$, $<$, and $\in$.  Terms and formulas are built from variables, constant symbols, function symbols, relation symbols, propositional connectives ($\neg$, $\wedge$, $\vee$, etc.), and the quantifiers $\forall$ and $\exists$.  The equality relation symbol is restricted to first-order objects.  Equality for second-order objects is defined via $\in$, and we write $X = Y$ as an abbreviation for $\forall n(n \in X \biimp n \in Y)$.  Similarly, we write $X \subseteq Y$ as an abbreviation for $\forall n(n \in X \imp n \in Y)$.

We use ${\Delta}^0_0$ to denote the class of all formulas, possibly with parameters, where no second-order quantifiers appear and all first-order quantifiers are {\em bounded;} that is, of the form $\forall x (x< t \rightarrow  \varphi)$ or $\exists x (x < t \wedge \varphi)$. We simultaneously define ${\Sigma}^0_{0}={\Pi}^0_{0}={\Delta}^0_0$, and we recursively define ${\Sigma}^0_{n+1}$ to be the set of all formulas of the form $\exists x \varphi$ with $\varphi\in {\Pi}^0_{n}$ and similarly define ${\Pi}^0_{n+1}$ to be the set of all formulas of the form $\forall x \varphi$ with $\varphi\in {\Sigma}^0_{n}$. We denote by ${\Pi}^0_\omega$ the union of all ${\Pi}^0_n$; these are the {\em arithmetical formulas.} 
The classes ${\Sigma}^1_n$ and ${\Pi}^1_n$ are defined analogously by counting alternations of second-order quantifiers and setting ${\Sigma}_0^1 = {\Pi}^1_0 = {\Delta}^1_0 = {\Pi}^0_\omega$.

We use Robinson arithmetic $\sf Q$ as our background theory, which essentially consists of the axioms of $\pa$ without induction (see~\cite[Definition~I.1.1]{HajekPudlak:1993:Metamathematics}).  When added to the basic axioms of $\sf Q$, the following schemes axiomatize many of the theories we consider.  Below, $\Gamma$ denotes a class of formulas.

\begin{itemize}

\item $\Gamma\mbox{-}\compax: \ \exists X\forall x\ \big (x\in X\leftrightarrow \varphi(x)\big )$,  where $\varphi\in\Gamma$ and $X$ is not free in $\varphi$;

\item ${\Delta}^0_1\mbox{-}\compax: \ \forall x \big (\pi(x)\leftrightarrow\sigma(x) \big )\rightarrow\exists X\forall x\ \big (x\in X\leftrightarrow \sigma(x)\big )$, where $\sigma\in{\Sigma}^0_1$, $\pi\in{\Pi}^0_1$, and $X$ is not free in $\sigma$;

\item ${\rm I}\Gamma : \ \varphi({\tt 0})\wedge\forall x\, \big (\varphi(x) \to\varphi(x+ {\tt 1}) \big )\ \to\ \forall x \ \varphi(x)$, where $\varphi\in\Gamma$.
\end{itemize}

Recursive comprehension, the base theory of second-order arithmetic, is defined as
\[\rca : \ {\sf Q} + {\rm I} \Sigma^0_1 + {\Delta}^0_1\mbox{-}\compax.\]
We also define the theory of arithmetical comprehension $\aca\colon \rca + \Sigma^ 0 _1$-$\compax$ and the much stronger theory $\pica \colon \aca + \Pi^ 1 _1$-$\compax$.  This gives us three of the four important theories that we consider; the fourth, $\wkl$, requires formalizing trees in second-order arithmetic.

$\rca$ suffices to define a bijective pairing function $\langle \cdot, \cdot \rangle \colon \Nb^{2} \to \Nb$ that is increasing in both coordinates, such as $\langle x,y\rangle = (x+y)(x+y+1)/2$.  We also use $\langle \cdot,\cdot \rangle$ to denote a pairing function on sets, which may also be defined in a standard way. With this, a binary relation is a pair of sets $\langle A,R\rangle $ with $A,R\subseteq \Nb$, where the elements of $R$ are of the form $\langle n,m\rangle$ with $n,m\in A$. When a binary relation $F$ is meant to represent a function, we write $y=F(x)$ instead of $\langle x,y\rangle\in F$.

$\rca$ also suffices to implement the typical codings of finite sequences of natural numbers as natural numbers (see~\cite[Section~II.2]{SimpsonSOSOA}, for example).  The set of all finite sequences of natural numbers is denoted $\BaireSet$. We write $\sigma\secseq \tau $ if $\sigma$ is an initial segment of $\tau$, $\sigma\secsub \tau $ if $\sigma$ is a proper initial segment of $\tau$, and define $\mathop\downarrow \sigma=\{\tau \in \BaireSet:\tau \secseq \sigma\}$.  If $x \colon \Nb \to \Nb$ and $n \in \Nb$, we write $ x \rest n$ for the finite sequence $(x (i))_{i<n}$. We extend the use of $\secsub$ by writing $\sigma \secsub x$ whenever $\sigma = x \rest n$ for some $n$.

\begin{Definition}
A {\em tree} is a set $T\subseteq \BaireSet$ such that $\forall \sigma(\sigma \in T \imp \da\sigma \subseteq T)$.  We say that $T$ is a {\em binary tree} if $T\subseteq \{0,1\}^{< \Nb}$, that is, if all entries appearing in elements of $T$ are either $0$ or $1$.  When $T\subseteq \BaireSet$ is a tree, we say that an infinite sequence $x$ is a \emph{path through $T$} if $\forall n (x \rest n\in T)$. The collection of all paths through $T$ is denoted $[T]$.
\end{Definition}

Say that a set $X\subseteq \Nb$ is {\em finite} if it is bounded above, and say that $X$ is infinite otherwise. With this, we define the axiom $\rm WKL$ to be the natural formalization of the following theorem.

\begin{Theorem}[K\"onig]
Every infinite binary tree has an infinite path. 
\end{Theorem}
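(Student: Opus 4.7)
The plan is to build an infinite path through $T$ by a greedy construction that, at each level, moves to a child having infinitely many descendants in $T$. Call a node $\sigma \in T$ \emph{extendable} if the set $\{\tau \in T : \sigma \secseq \tau\}$ is infinite. By hypothesis $T$ is infinite, so the empty sequence $\langle\rangle$ is extendable.

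The key combinatorial observation I would establish first is the following: if $\sigma \in T$ is extendable, then at least one of $\sigma \smf \langle 0 \rangle$ and $\sigma \smf \langle 1 \rangle$ lies in $T$ and is itself extendable. Indeed, since $T$ is downward closed, every $\tau \in T$ with $\sigma \secsub \tau$ extends one of the two children $\sigma \smf \langle i \rangle$ (for $i$ the $|\sigma|$-th entry of $\tau$), and that child therefore belongs to $T$. Thus the infinite set $\{\tau \in T : \sigma \secsub \tau\}$ is the union of the two sets $\{\tau \in T : \sigma \smf \langle i\rangle \secseq \tau\}$ for $i \in \{0,1\}$. If neither piece were infinite, the union would be finite, contradicting extendability of $\sigma$; hence some child is extendable.

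With this in hand, I would recursively define a sequence $(\sigma_n)_{n \in \Nb}$ of extendable nodes with $\sigma_n \in \{0,1\}^n$ and $\sigma_n \secsub \sigma_{n+1}$: set $\sigma_0 = \langle\rangle$, and given an extendable $\sigma_n$, let $\sigma_{n+1}$ be the lexicographically first extendable child guaranteed by the previous paragraph. Finally, define $x \colon \Nb \to \{0,1\}$ by letting $x(n)$ be the last entry of $\sigma_{n+1}$. By construction $x \rest n = \sigma_n \in T$ for every $n$, so $x \in [T]$, as required.

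The only subtle point, and the reason this classical argument will resist formalization in $\rca$, is the appeal to extendability: the predicate ``$\sigma$ has infinitely many extensions in $T$'' is $\Pi^0_2$, so the recursive construction of $(\sigma_n)$ implicitly uses a choice that $\rca$ cannot in general make effective. Classically this causes no difficulty, but it is precisely this step that forces the formalization $\wkl$ to be an independent axiom rather than a theorem of the base system.
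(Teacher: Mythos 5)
Your proof is the standard classical argument for K\"onig's lemma and it is correct: the ``extendable node'' predicate, the pigeonhole step showing an extendable node has an extendable child, and the recursive construction of the path are all as they should be. Note that the paper itself offers no proof of this statement --- it is stated only as the classical theorem whose formalization is then \emph{adopted as the axiom} $\mathrm{WKL}$, so there is nothing to compare against on that side. Your closing remark about non-formalizability in $\rca$ is exactly the right point, with one small refinement: for a \emph{binary} tree the assertion ``$\sigma$ has extensions in $T$ of every length'' has only a bounded existential quantifier inside the universal one, so extendability is in fact $\Pi^0_1$ rather than $\Pi^0_2$; but it is still not $\Delta^0_1$, which is what blocks $\rca$ from forming the set of extendable nodes (and indeed there are computable infinite binary trees with no computable path, so the failure is genuine, not merely an artifact of the proof).
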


For reference, we list the theories we have defined in increasing order of strength:

\begin{center}
\begin{tabular}{ll}
$\rca:$&${\sf Q} + {\rm I}{ \Sigma}^0_1 + { \Delta}^0_1$-$\compax$;\\
$\wkl :$&$\rca + \rm WKL$;\\
$\aca :$&$\rca + { \Sigma}^0_1$-$\compax$;\\
$\pica:$&$\aca + { \Pi}^1_1$-$\compax$.\\
\end{tabular}
\end{center}

The standard reference for second-order arithmetic is Simpson's~\cite{SimpsonSOSOA}, and we refer the reader to it for a complete treatment of all the material mentioned above.

\section{Metric spaces in second-order arithmetic}\label{sec-MetricDefs}

Part of the appeal of second-order arithmetic as a foundational system for mathematics is that it suffices to develop a large part of mathematical analysis, particularly when dealing with complete separable metric spaces. However, this requires some coding machinery. In this section, we review this machinery and establish notation that will be used throughout.

First, we assume that $\Qb$ is represented in some standard way using e.g.~pairs of natural numbers and that $\Rb$ is represented by rapidly converging Cauchy sequences of rationals as in~\cite[Section~II.4]{SimpsonSOSOA}.  We use the notation $\Qb_{>0}= \{q \in \Qb: q > 0 \}$, and we define $\Qb_{\geq 0}$, $\Rb_{\geq 0}$, etc.\ analogously.

\begin{Definition}[{$\rca$; see~\cite[Definition~II.5.1]{SimpsonSOSOA}}]
A (code for a) complete separable metric space $\spc X = \widehat X$ is a non-empty set $X \subseteq \Nb$ together with a sequence of real numbers $d\colon X \times X \to \Rb_{\geq 0}$ such that $d(a, a) = 0$, $d(a, b) = d(b, a) \geq 0$, and $d(a, b)+d(b, c) \geq d(a, c)$ for all $a, b, c \in X$. A {\em point} of $\widehat X$ is a sequence $x = (x_i)_{i \in \Nb}$ of elements of $X$ such that for all $i\leq j$, $d(x_i , x_j)\leq  2^{-i}$. We write $x\in \widehat X$ to mean that
$x$ is a point of $\widehat X$.
We identify $a\in X$ with the sequence $(a)_{i \in \Nb}$ and consider $X$ as a dense subset of $\widehat X$.
We set $d(x,y)=\lim_{n\to\infty}d(x_n,y_n)$, and write $x =_{\spc X} y$ if $d(x,y)=0$ (subscripts will be omitted if there is no confusion).
\end{Definition}

We use either notation $\spc X$ or $\widehat X$ to denote complete separable metric spaces.  The symbol $\widehat X$ emphasizes that the space is coded by the dense set $X$ (and the metric on $X$).  The following spaces appear throughout.

\begin{enumerate}

\item The real line, $\Rb$, with dense set the rational numbers, $\Qb$, equipped with the usual metric.
Closed subintervals of $\Rb$ may be represented similarly, where the dense set for $[a,b]$ is $\Qb \cap [a,b]$.

\item The Baire space, with dense set the set of sequences $x \in \Nb^\Nb$ that are eventually zero and
\[d(x,y) = \max \big ( \{ 0 \} \cup \big \{2^{-n}: x(n) \neq y(n) \} \big ). \] 

\item The Cantor space, which is $\{0,1\}^\Nb$ (also denoted $2^\Nb$) seen as a subspace of the Baire space.

\item $\mathcal C \big ( [a,b] \big )$ with $a<b$ rational numbers. The dense set is given by piecewise linear continuous functions $f\colon [a,b] \to \Rb$ with rational breakpoints, each represented by finitely many pairs $\langle x,f(x) \rangle \in \Qb \times \Qb$.  The metric is $d(f,g) = \max_{x\in [a,b]} |f(x) - g(x)|$.\footnote{We emphasize that $\mathcal C \big ( [a,b] \big )$ is the space of uniformly continuous functions $f\colon [a,b] \to \Rb$ that have moduli of uniform continuity in the sense of~\cite[Definition IV.2.1]{SimpsonSOSOA}.  The statement ``every continuous function $f\colon [a,b] \to \Rb$ has a modulus of uniform continuity'' is equivalent to $\wkl$ over $\rca$.}

\end{enumerate}

In the cases of the Baire space and the Cantor space, a sequence that is eventually zero may be represented via an appropriate finite initial segment and hence as a natural number.  If $\sigma$ is a finite sequence, then where convenient we identify $\sigma$ with $\sigma^\smf 0^\Nb$, which is the infinite sequence beginning with $\sigma$ and is then identically zero.  We remark also that the official dense set for $\mathcal C \big ( [a,b] \big )$ in~\cite{SimpsonSOSOA} is the set of polynomial functions $f\colon [a,b] \to \Qb$ with rational coefficients, but piecewise linear functions are more convenient for us.  The two presentations are equivalent over $\rca$ (see \cite[Example II.10.3 and Lemma IV.2.4]{SimpsonSOSOA}).

\begin{Definition}[{$\rca$; see~\cite[Definition~II.5.6]{SimpsonSOSOA}}]
Let $\widehat X$ be a complete separable metric space. A {\em (code for a) rational open ball $\ball ar$} is an ordered pair $\la a,r\ra$, with $a\in X$ and $r \in \Qb_{>0}$.  We define $\ball ar\ballsub \ball bq$ if $d(a,b) + r < q$.

A {\em (code for an) open set} $\MC U$ in $\widehat X$ is a set $U \subseteq \Nb \times X \times \Qb_{>0}$, where a point $x \in \widehat X$ is said to belong to $\MC U$ (abbreviated $x \in  \MC U$) if it satisfies the ${\Sigma}^0_1$ condition $\exists n \exists a\exists r (d(x, a) < r \wedge (n, a, r) \in U)$.  A {\em (code for a) closed set $\mathcal C$} is the same as a code for its complement $\widehat X\setminus \mathcal  C$, except we define $x\in \mathcal  C$ if $x\not\in \widehat X\setminus \mathcal  C$.
\end{Definition}

The intuition behind the above definition is that a code for an open set is an enumeration of rational open balls.  If $U \subseteq \Nb \times X \times \Qb_{>0}$ codes an open set, then the rational open ball $\ball{a}{r}$ appears in the enumeration if there is an $n$ with $(n,a,r) \in U$.

We remark that for $\spc X \subseteq \Rb$, we may represent basic open sets in the form $(a-r,a+r)$ rather than $\ball ar$, and we will often prefer this representation.  On occasion we will regard closed balls (or, more precisely, closures of balls) as metric spaces in their own right, in which case if $\ball ar$ is a ball in $\widehat X $, then $\overline {\ball ar} $ is the subspace with dense set $\ball ar \cap X$.

We may also reason about compactness within second-order arithmetic.

\begin{Definition}[$\rca$; {see~\cite[Definition~III.2.3]{SimpsonSOSOA}}]\label{def-compact}
A complete separable metric space $\spc X$ is \emph{compact} if there is a sequence of finite sequences $((x_{i,j} : j < n_i) : i \in \Nb)$ of points in $\spc X$ such that
\begin{align*}
(\forall z \in \spc X)(\forall i \in \Nb)(\exists j < n_i)(d(z, x_{i,j}) < 2^{-i}).
\end{align*}
We say that $\spc X$ is {\em Heine--Borel compact} if for every sequence $(\ball{a_k}{r_k} : k \in \Nb)$ of rational open balls such that $(\forall x \in \spc X)(\exists k \in \Nb)(x \in \ball{a_k}{r_k})$, there is an $N \in \Nb$ such that $(\forall x \in \spc X)(\exists k < N)(x \in \ball{a_k}{r_k})$.
\end{Definition}

What we call `compact' might be more properly called `uniformly (or effectively) totally bounded,' but when working with complete separable metric spaces in second-order arithmetic, the convention is to use `compact' for this notion, and we do not wish to deviate.  $\wkl$ is required to show that every compact (in the above sense) complete separable metric space is Heine--Borel compact.

\begin{Theorem}[{see~\cite[Theorem~IV.1.2 and Theorem~IV.1.5]{SimpsonSOSOA}}]\label{thm-WKLCompact}
The following are equivalent over $\rca$.
\begin{enumerate}
\item $\wkl$.
\item Every compact complete separable metric space is Heine--Borel compact.
\item The unit interval $[0,1]$ is Heine--Borel compact.
\end{enumerate}
\end{Theorem}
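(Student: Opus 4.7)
My plan is to establish the cycle $(1) \imp (2) \imp (3) \imp (1)$. The implication $(2) \imp (3)$ is immediate because $[0,1]$ is compact in the sense of Definition~\ref{def-compact}: the $2^{-i}$-net $(k/2^i : 0 \leq k \leq 2^i)$ is a witness sequence.

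For $(1) \imp (2)$, I would work inside $\wkl$. Let $\spc X$ be compact with witness $((x_{i,j} : j < n_i) : i \in \Nb)$, and let $(\ball{a_k}{r_k})_{k \in \Nb}$ cover $\spc X$; assume toward contradiction that no finite subcollection covers $\spc X$. Build a bounded, finite-branching tree $T$ whose level-$n$ nodes are sequences $\sigma$ with $\sigma(i) < n_i$ and $d(x_{i,\sigma(i)}, x_{j,\sigma(j)}) \leq 2^{-i} + 2^{-j}$ for $i \leq j < n$, further restricted to those $\sigma$ that witness $x_{n-1,\sigma(n-1)}$ staying at distance at least $r_k + 2^{-(n-1)}$ from $a_k$ for every $k < n$. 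The no-finite-subcover hypothesis keeps this refined tree infinite at every level (a point $y_n \in \spc X$ not covered by $\ball{a_0}{r_0}, \ldots, \ball{a_{n-1}}{r_{n-1}}$ gives rise to such a $\sigma$ of length $n$), and bounded K\"onig's lemma (equivalent to $\wkl$) produces a path $\sigma^*$. Its limit $x_\ast \in \spc X$ is covered by some $\ball{a_{k_0}}{r_{k_0}}$, but the tree condition forces $x_\ast$ to avoid every $\ball{a_k}{r_k}$ for $k < |\sigma^*|$, a contradiction.

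For $(3) \imp (1)$, I would reduce to the compactness of Cantor space. First, Heine--Borel compactness passes to closed subspaces (augment any cover of the subspace by a sequence of rational balls enumerating the open complement), so in particular the middle-thirds Cantor set $C \subseteq [0,1]$ is Heine--Borel compact. The ternary map $\sigma \mapsto \sum_{i<|\sigma|} 2\sigma(i) 3^{-i-1}$ identifies $C$ with $2^\Nb$ equipped with its product metric, transferring Heine--Borel compactness to $2^\Nb$. Now given an infinite binary tree $T$ with no path, the basic clopen sets $N_\sigma = \{x \in 2^\Nb : \sigma \secsub x\}$ for $\sigma \notin T$ cover $2^\Nb$, so by Heine--Borel there is a finite subcover $N_{\sigma_1}, \ldots, N_{\sigma_m}$. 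Setting $N = \max_i |\sigma_i|$, every $\tau \in 2^N$ has some $\sigma_i \notin T$ as an initial segment, hence $\tau \notin T$ by downward closure; but then $T$ has no nodes at level $N$, contradicting that $T$ is infinite.

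The principal obstacle in this argument is formalization in $\rca$. In $(1) \imp (2)$, the ``not covered'' predicate used to refine the tree is on its face $\Pi^0_1$, so some care is needed to replace it by a $\Delta^0_1$ condition involving explicit distance estimates at the finite precision available at each level; this is what the strict-inequality buffer $r_k + 2^{-(n-1)}$ is designed to arrange. In $(3) \imp (1)$, one must verify inside $\rca$ that Heine--Borel compactness transfers both across the passage to the closed subspace $C$ and across the homeomorphism $C \cong 2^\Nb$, by tracking how rational balls in one presentation translate to rational balls in the other. Both verifications are routine but benefit from being made explicit in the weak base theory.
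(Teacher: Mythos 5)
The paper itself gives no proof of this theorem: it is quoted directly from Simpson (Theorems IV.1.2 and IV.1.5), so there is nothing in-paper to compare against. Your cycle $(1)\imp(2)\imp(3)\imp(1)$ is the standard argument from that source -- a bounded tree of net-indices plus bounded K\"onig's lemma for $(1)\imp(2)$, and the middle-thirds copy of $2^{\Nb}$ with the clopen cover $\{N_\sigma : \sigma \notin T\}$ for $(3)\imp(1)$. The $(2)\imp(3)$ and $(3)\imp(1)$ parts are correct modulo the ball-translation bookkeeping you already flag.

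There is, however, a concrete error in $(1)\imp(2)$: your avoidance buffer points the wrong way. If $y_n$ is not covered by $\ball{a_0}{r_0},\dots,\ball{a_{n-1}}{r_{n-1}}$, you only know $d(y_n,a_k)\ge r_k$, so a net point $x_{n-1,\sigma(n-1)}$ chosen within $2^{-(n-1)}$ of $y_n$ is only guaranteed to satisfy $d(x_{n-1,\sigma(n-1)},a_k)\ge r_k-2^{-(n-1)}$; it can easily fail your requirement $d(x_{n-1,\sigma(n-1)},a_k)\ge r_k+2^{-(n-1)}$ (for instance when $y_n$ lies on the sphere of radius $r_k$ about $a_k$). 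With your condition the tree can therefore die out even when no finite subcover exists, and the proof collapses exactly at the ``tree is infinite'' step. The repair is standard: impose $d(x_{m-1,\sigma(m-1)},a_k)\ge r_k-2^{-(m-1)}$ for all $m\le |\sigma|$ and $k<m$ (imposing it at every level also makes the set genuinely downward closed, which your formulation is not, since your covering clause constrains only the last coordinate), and phrase it via a fixed-precision rational approximation to the distance so that membership is $\Delta^0_1$ -- note that the buffer by itself does nothing to reduce the complexity of the real-number comparison. The limit point $x_*$ of a path then satisfies $d(x_*,x_{n-1,\sigma^*(n-1)})\le 2^{-(n-1)}$, hence $d(x_*,a_k)\ge r_k-2^{-(n-2)}$ for every $n>k$, hence $d(x_*,a_k)\ge r_k$, so $x_*$ is uncovered and you get your contradiction.
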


Sequential compactness is an even stronger notion.  $\aca$ is required to show that every compact space is sequentially compact.
\begin{Theorem}[{see~\cite[Theorem~III.2.2 and Theorem~III.2.7]{SimpsonSOSOA}}]\label{thm-SeqCompact}
The following are equivalent over $\rca$.
\begin{enumerate}
\item $\aca$.
\item Every infinite sequence of points in a compact complete separable metric space has a convergent subsequence.
\item Every infinite sequence of points in $[0,1]$ has a convergent subsequence.
\item Every increasing (or decreasing) sequence of points in $[0,1]$ converges.
\item\label{itRational} Every increasing (or decreasing) sequence of rational points in $[0,1]$ converges.
\end{enumerate}
\end{Theorem}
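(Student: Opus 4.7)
The plan is to establish the cycle $1 \Rightarrow 2 \Rightarrow 3 \Rightarrow 4 \Rightarrow 5 \Rightarrow 1$. The middle implications are almost formal, so the work lives in the top and bottom of the cycle.

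For $1 \Rightarrow 2$, suppose $\aca$ and let $(x_n : n \in \Nb)$ be a sequence in a compact space $\spc X$ with covering nets $((x_{i,j} : j < n_i) : i \in \Nb)$. For each finite sequence $\sigma = (j_0, \ldots, j_{i-1})$ with $j_k < n_k$, the predicate ``there are infinitely many $n$ such that $d(x_n, x_{k, j_k}) < 2^{-k}$ for all $k < i$'' is $\Pi^0_2$, hence arithmetical, so by $\Sigma^0_1$-$\compax$ (which $\aca$ provides) I can form the tree $T$ of all such $\sigma$. An easy induction on $i$ shows $T$ has elements of every length, and at each level we can pick the least $j < n_i$ that keeps us on $T$, yielding an arithmetically definable path $\sigma^\ast$. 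Thinning $(x_n)$ along $\sigma^\ast$ using a second arithmetical selection produces a Cauchy, hence convergent, subsequence. Implication $2 \Rightarrow 3$ is immediate because $[0,1]$ is compact in the sense of Definition~\ref{def-compact} (witnessed by the dyadic nets), and $3 \Rightarrow 4$ follows since a convergent subsequence of a monotone bounded sequence forces the whole sequence to converge to the same limit, a fact provable in $\rca$. Then $4 \Rightarrow 5$ is a trivial restriction from reals to rationals.

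For the reversal $5 \Rightarrow 1$, I would use the standard base-$3$ coding trick. It suffices to derive $\Sigma^0_1$-$\compax$ over $\rca$, which in turn reduces to showing that the range of an arbitrary injection $g \colon \Nb \imp \Nb$ exists as a set. Define the rational sequence
\[
a_n = \sum_{k < n} 3^{-g(k)-1},
\]
which lies in $\rca$ because it is a primitive recursive sum of rationals. It is strictly increasing and bounded above by $\sum_{j \geq 0} 3^{-j-1} = 1/2$, so by (\ref{itRational}) it converges to some real $L \in [0, 1/2]$. Since $g$ is injective, at most one term $3^{-j-1}$ appears in the infinite sum, so the base-$3$ expansion of $L$ is unique and has digit $1$ in position $j+1$ precisely when $j \in \ran(g)$ and digit $0$ otherwise. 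Membership in $\ran(g)$ is therefore both $\Sigma^0_1$ (by definition) and $\Pi^0_1$ (by inspecting $L$ to sufficient precision), so $\Delta^0_1$-$\compax$ yields $\ran(g)$ as a set, completing the reversal.

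The main obstacle is the reversal $5 \Rightarrow 1$, where one must be careful that the coding of $L$ is genuinely unambiguous in $\rca$ (hence base $3$ rather than base $2$, to avoid dyadic rationals having two expansions) and that the equivalence of $\Sigma^0_1$-$\compax$ with the ``ranges of injections exist'' principle is invoked cleanly. The forward direction $1 \Rightarrow 2$ is also slightly delicate because naively selecting ``the ball containing infinitely many $x_n$'' is an arithmetical but not computable choice; the subsequence itself is then extracted by a second arithmetical selection, and one should verify that both applications of $\Sigma^0_1$-$\compax$ suffice and that the resulting subsequence is Cauchy with rate $2^{-i}$.
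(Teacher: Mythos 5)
Your argument is correct and is essentially the standard proof: the paper does not prove this theorem itself but cites Simpson's Theorems~III.2.2 and III.2.7, and both your forward chain (the tree of balls containing infinitely many terms, a leftmost arithmetical path, then thinning to a Cauchy subsequence) and your reversal (a bounded increasing rational sum coding the range of an injection, combined with the equivalence of $\aca$ with ``ranges of injections exist'') are exactly the arguments given there. One phrase to tighten in $5 \Rightarrow 1$: the base-$3$ expansion of $L$ is not literally unique when $\ran(g)$ is finite (e.g.\ $1/3 = 0.1_3 = 0.0222\ldots_3$); what actually makes the digit extraction $\Delta^0_1$ in $L$ is that the reals of the form $\sum_{j \in S} 3^{-j-1}$ having digit $0$ versus digit $1$ at a fixed position lie in two closed intervals separated by a gap, so each alternative is $\Sigma^0_1$ and exactly one holds.
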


Note that the equivalence with \eqref{itRational} is not stated explicitly in \cite{SimpsonSOSOA}, but it follows from the proof of Theorem~III.2.2 given there.  We conclude this section by defining continuous functions between complete separable metric spaces.  The idea is to code a continuous function $f \colon \spc X \imp \spc Y$ by an enumeration of (codes for) pairs of open balls $\la \ball{a}{r}, \ball{b}{q} \ra$, where $\ball{a}{r} \subseteq \spc X$ and $\ball{b}{q} \subseteq \spc Y$.  If the pair $\la \ball{a}{r}, \ball{b}{q} \ra$ appears in the enumeration, then this means that $f$ maps $\ball{a}{r}$ into the closure of $\ball{b}{q}$.

\begin{Definition}[$\rca$; {\cite[Definition~II.6.1]{SimpsonSOSOA}}]\label{def-continuous}
Let $\spc X = \widehat X$ and $\spc Y = \widehat Y$ be complete separable metric spaces.  A \emph{continuous partial function} $f\colon \spc X \to \spc Y$ is coded by a set $\Phi \subseteq \Nb \times X \times \Qb_{>0} \times Y \times \Qb_{>0}$ that satisfies the properties below.  Let us write $\ball ar \fcode \Phi \ball bs$ for $\exists n \ \big (\sel n, a, r, b, s \ser \in \Phi \big )$.  Then, for all $a, a' \in X$, all $b,b'\in Y$, and all $r, r',s,s' \in \Qb_{>0}$, $\Phi$ must satisfy:
\begin{enumerate}[label=({\sc cf}\arabic*)]
\item\label{cfone} if $\ball{a}{r} \fcode \Phi \ball{b}{s}$ and $\ball{a}{r} \fcode \Phi \ball{b'}{s'}$, then $d(b,b') \leq s+s'$;

\item\label{cftwo} if $\ball{a}{r} \fcode \Phi \ball{b}{s}$ and $\ball{a'}{r'} \ballsub \ball{a}{r}$, then $\ball{a'}{r'}  \fcode \Phi \ball{b}{s}$;

\item\label{cfthree} if $\ball{a}{r} \fcode \Phi \ball{b}{s}$ and $\ball{b}{s} \ballsub \ball{b'}{s'}$, then $\ball{a}{r} \fcode \Phi \ball{b'}{s'}$.
\end{enumerate}

A point $x \in \spc X$ is in the domain of the function $f$ coded by $\Phi$ if, for every $\varepsilon > 0$, there are $\ball{a}{r}\fcode \Phi \ball{b}{s}$ such that $d(x, a) < r$ and $s < \varepsilon$. If $x \in  \dom(f)$, we define the value $f(x)$ to be the unique point $y \in \spc Y$ such that $d(y, b) \leq  s$ for all $\ball{a}{r} \fcode \Phi \ball{b}{s}$ with $d(x, a) < r$.
A {\em continuous function} is a continuous partial function $f \colon \spc X \to \spc Y$ with $\dom(f) = \spc X$.
In case $\spc Y=\Rb$, we often write $\ball{a}{r} \fcode \Phi (u,v)$ for $\ball{a}{r} \fcode \Phi \ball{b}{s}$ with $u=b-s$ and $v=b+s$.
\end{Definition}

To reason about the values of coded continuous functions, it often helps to think in the following way.  Suppose that $\Phi$ codes a continuous function $f\colon \spc X \to \spc Y$.  If $x \in \spc X$ and $y \in \spc Y$ are such that for every $\varepsilon > 0$ there are $\ball ar \fcode \Phi \ball bs$ with $x \in \ball ar$, $y \in \ball bs$, and $s < \varepsilon$, then $f(x) = y$.

The following lemmas are useful for constructing codes of open sets, closed sets, and continuous functions.

\begin{Lemma}[{\cite[Lemma~II.5.7]{SimpsonSOSOA}}]\label{lem-code-for-open}
For a given $\Sigma^{0}_{1}$ formula $\varphi(x)$, the following is provable within $\rca$.
Let $\spc X = \widehat X$ be a complete separable metric space.
If $x=_{\spc X}y$ implies $\varphi(x) \leftrightarrow \varphi(y)$, then there exists (a code for) an open set $\MC U \subseteq \spc X$ such that $\varphi(x)\leftrightarrow x\in \MC U$.
\end{Lemma}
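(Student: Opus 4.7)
The plan is to translate the $\Sigma^0_1$ condition $\varphi$ into an enumeration of rational open balls that together cover exactly $\{x \in \widehat X : \varphi(x)\}$, exploiting the fact that the bounded quantifiers inside $\varphi$ can reference only finitely many components of the set parameter $x$.

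First, I would write $\varphi(x) \equiv \exists n\, \theta(n, x)$ with $\theta \in \Sigma^0_0$. Because every bounded first-order quantifier in $\theta$ ranges only up to a numerical term and $x$ enters $\theta$ only as a set parameter through its components $x(i)$, a metatheoretic induction on the complexity of $\theta$ produces a primitive recursive function $h \colon \Nb \to \Nb$ and a $\Sigma^0_0$ predicate $\theta^*(n, \sigma)$ on codes for finite sequences from $X$ such that $\rca$ proves $\theta(n, x) \biimp \theta^*(n, x \rest h(n))$ for every $n \in \Nb$ and every $x \in \widehat X$.

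Next, I would define the code $U$ by enumerating, for each quadruple $(n, \sigma, k, r)$ with $k \geq h(n)$, $\sigma \in X^{k}$, and $r \in \Qb_{>0}$ satisfying (i) $d(\sigma(i), \sigma(j)) \leq 2^{-(i+1)}$ for all $i \leq j < k$; (ii) $\theta^*(n, \sigma \rest h(n))$ holds; and (iii) $r \leq 2^{-(k+1)}$, a tuple $\langle \ell, \sigma(k-1), r\rangle$, where $\ell$ is the position of the quadruple in a fixed canonical enumeration. The defining conditions are $\Sigma^0_1$ (the inequality in (i) being verifiable via rational approximations of the metric), so the collection of valid quadruples is enumerable by a $\Delta^0_1$-definable function in $\rca$; letting $U$ be the graph of this function yields a set via $\Delta^0_1$-comprehension. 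Let $\MC U$ be the open set coded by $U$.

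Finally, I would verify $\MC U = \{x \in \widehat X : \varphi(x)\}$. For the forward direction, if $x$ lies in some ball $\ball{\sigma(k-1)}{r}$ from the enumeration, the slack provided by (i) together with (iii) allows me to manufacture a representative $x' =_{\spc X} x$ by setting $x'(i) := \sigma(i)$ for $i < k$ and choosing $x'(i)$ sufficiently close to $x$ for $i \geq k$ so that the rapidly-converging condition is preserved; then $\theta(n, x') \biimp \theta^*(n, \sigma \rest h(n))$ holds, giving $\varphi(x')$, and by $=_{\spc X}$-invariance $\varphi(x)$ follows. For the converse, given $x$ with $\varphi(x)$, I would pass to a sufficiently shifted representative $\tilde x =_{\spc X} x$, say $\tilde x(i) := x(i+3)$, and use $\varphi(\tilde x)$ to extract a witness $m$; then for $k \geq h(m)$, the quadruple $(m, \tilde x \rest k, k, 2^{-(k+1)})$ satisfies (i)--(iii) and gets enumerated into $U$, while $d(x, \tilde x(k-1)) \leq 2^{-(k+2)} < 2^{-(k+1)}$ places $x$ in the corresponding ball.

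The main obstacle is the coordinated choice of slack constants throughout the argument. The exponents in (i) and (iii) and the size of the shift in the reverse direction must be calibrated so that the triangle-inequality estimates needed for the forward inclusion (constructing $x'$ extending $\sigma$) and the reverse inclusion (ensuring some enumerated ball actually contains $x$) both go through with the same parameters. This is elementary bookkeeping, but is the one delicate aspect; the remainder reduces to syntactic manipulation of $\theta$ and the standard Cauchy-sequence machinery of $\rca$.
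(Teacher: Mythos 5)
The paper does not actually prove this lemma---it cites it as \cite[Lemma~II.5.7]{SimpsonSOSOA}---and your argument is essentially that standard proof: reduce the $\Sigma^0_1$ matrix to a finite-use predicate on initial segments of the rapidly converging Cauchy sequence, enumerate small balls centred at the last entries of securing segments, and use $=_{\spc X}$-invariance to pass between a given point and a repaired representative in each direction. The one correction needed is that your condition (i), written with $\leq$, is a $\Pi^0_1$ (not $\Sigma^0_1$) property of the quadruple, since a non-strict inequality between a real and a rational is only refutable, not verifiable, from rational approximations; replacing it by the strict inequality $d(\sigma(i),\sigma(j)) < 2^{-(i+1)}$ makes the enumeration genuinely $\Sigma^0_1$, weakens nothing in the forward inclusion, and is still satisfied with room to spare by the shifted representative $\tilde x$ in the reverse inclusion, so this is exactly the kind of slack calibration you already flagged.
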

This lemma guarantees that a $\Sigma^{0}_{1}$-definable subset of a complete separable metric space is an open set, and thus a $\Pi^{0}_{1}$-definable subset is a closed set.
Note that this lemma holds uniformly. In other words, if $\varphi(x,i)$ is a $\Sigma^{0}_{1}$ formula that defines a subset of $\spc X$ for each $i\in\Nb$, then there exists a sequence of codes for open sets $(\MC{U}_{i})_{i\in\Nb}$ such that $\varphi(x,i)\leftrightarrow x\in \MC{U}_{i}$.

Intuitively, the following lemma states that if a function is uniformly continuous in an effective way and the values of the function can be computed on a dense set (e.g., if $f(x)$ is provided by elementary functions, by power series, etc.), then there is a code for the function.  This property also holds restricted to any open set.

\begin{Lemma}[$\rca$]\label{lem-code-for-conti1}
Let $\spc X = \widehat X$ and $\spc Y = \widehat Y$ be complete separable metric spaces, and let $\MC{U}\subseteq \spc X$ be an open set.
Assume that $(\la a_{i},y_{i} \ra : i \in \Nb)$ is a sequence of points in $X\times\spc Y$ and that $h\colon\Nb\to\Nb$ is a function such that
\begin{itemize}
 \item $(a_{i})_{i\in\Nb}$ enumerates all points in $\MC{U}\cap X$,
 \item $d_{\spc X}(a_{i},a_{j})<2^{-h(n)}$ implies $d_{\spc Y}(y_{i},y_{j})<2^{-n}$ for all $i,j,n\in\Nb$.
\end{itemize}
Then, there exists (a code for) a continuous function $f:\MC{U}\to \spc Y$ such that $f(a_{i})=y_{i}$ for all $i\in \Nb$.  (In fact, $h$ is a modulus of uniform continuity for $f$; see \cite[Definition~IV.2.1]{SimpsonSOSOA}.)
\end{Lemma}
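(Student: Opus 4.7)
The plan is to build the code $\Phi$ for $f$ by an explicit $\Sigma^{0}_{1}$ enumeration and then verify the clauses of Definition~\ref{def-continuous} together with $\dom(f)=\MC U$ and $f(a_{i})=y_{i}$. Writing $B_{m}=\ball{c_{m}}{t_{m}}$ for the rational balls enumerated by the code of $\MC U$, I will place $\langle \ball{a}{r}, \ball{b}{s}\rangle$ into $\Phi$ precisely when there exist $i,n,m\in\Nb$ with
\begin{enumerate}[label=(\roman*)]
\item $\ball{a}{r}\ballsub\ball{a_{i}}{2^{-h(n)-1}}$,
\item $\ball{a_{i}}{2^{-h(n)}}\ballsub B_{m}$, and
\item $d_{\spc Y}(b,y_{i})+2^{-n+1}\leq s$.
\end{enumerate}
Condition (ii) forces $\ball{a}{r}\subseteq\MC U$, while the slack in the radii $2^{-h(n)-1}$ and $2^{-n+1}$ built into (i) and (iii) is there to absorb the triangle-inequality losses in the verifications below.

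Axioms {\sc cf}2 and {\sc cf}3 are then immediate: {\sc cf}2 is transitivity of $\ballsub$ applied to (i), and {\sc cf}3 follows from $d(b',y_{i})+2^{-n+1}\leq d(b',b)+d(b,y_{i})+2^{-n+1}<d(b',b)+s<s'$ whenever $\ball{b}{s}\ballsub\ball{b'}{s'}$. The only genuinely delicate step is {\sc cf}1, and this is where I expect to spend the most care. Given two witnesses $(i,n,m)$ and $(i',n',m')$ for the same source ball $\ball{a}{r}$, the key observation is that (i) and (ii) together imply $a\in\MC U\cap X$, so the enumeration hypothesis provides some $j$ with $a_{j}=a$. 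From (i) we then have $d(a_{j},a_{i})<2^{-h(n)}$ and $d(a_{j},a_{i'})<2^{-h(n')}$, so the modulus condition on $h$ yields $d_{\spc Y}(y_{j},y_{i})<2^{-n}$ and $d_{\spc Y}(y_{j},y_{i'})<2^{-n'}$, after which a triangle-inequality estimate combined with (iii) gives $d_{\spc Y}(b,b')\leq s+s'$. The argument collapses without (ii), because the center of the shared source ball would not be guaranteed to lie in the enumeration $(a_{j})$.

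To finish, I will show $\dom(f)=\MC U$ and $f(a_{i})=y_{i}$. Given $x\in\MC U$ and $\varepsilon>0$, pick $m$ with $x\in B_{m}$, then $n$ large enough that $2^{-n+1}<\varepsilon$ and $2\cdot 2^{-h(n)}<t_{m}-d(x,c_{m})$, then $a_{i}$ from the enumeration with $d(a_{i},x)<2^{-h(n)-1}$, and finally a rational $r$ strictly between $d(a_{i},x)$ and $2^{-h(n)-1}$; a direct check against (i)--(iii) shows $\ball{a_{i}}{r}\fcode{\Phi}\ball{y_{i}}{2^{-n+1}}$ and that this pair places $x$ in $\dom(f)$ with output tolerance less than $\varepsilon$. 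The reverse inclusion $\dom(f)\subseteq\MC U$ is immediate from (i) and (ii). Evaluating at $x=a_{i}$ via pairs $\langle\ball{a_{i}}{r},\ball{y_{i}}{2^{-n+1}}\rangle$ for arbitrarily large $n$ forces $f(a_{i})=y_{i}$, and the modulus-of-uniform-continuity claim for $h$ falls out directly from the relationship between (i) and (iii).
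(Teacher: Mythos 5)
Your construction is essentially the paper's: the same $\Sigma^0_1$-enumerated code with the $2^{-h(n)-1}$ radius slack, the same verifications of \ref{cftwo} and \ref{cfthree}, and the same totality argument, so the proposal is correct in substance. Two small points. First, in the totality and evaluation step you instantiate the target ball as $\ball{y_i}{2^{-n+1}}$, but the $y_i$ are only points of $\spc Y$, not necessarily elements of the countable dense set $Y$, and Definition~\ref{def-continuous} requires centers of target balls to lie in $Y$; the fix is to choose $b\in Y$ with $d_{\spc Y}(b,y_i)<2^{-n}$ and take $s=2^{-n+1}$ (or slightly larger, to satisfy your inequality (iii)), which is exactly what the paper does. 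Second, your route to \ref{cfone} through an enumerated point $a_j=a$ is valid, but the remark that the argument ``collapses'' without your condition (ii) overstates things: one can compare $a_i$ and $a_{i'}$ directly by the triangle inequality through the shared center $a$, getting $d_{\spc X}(a_i,a_{i'})<2^{-h(n)-1}+2^{-h(n')-1}\leq 2^{-h(\min(n,n'))}$ after normalizing $h$ to be non-decreasing; this is how the paper verifies \ref{cfone} and it needs no appeal to the enumeration hypothesis at that step.
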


\begin{proof}
Let $U$ be the code for $\MC{U}$.
Define a code $\Phi$ for a continuous partial function $f \colon \spc X \imp \Rb$ so that $\ball ar \fcode \Phi \ball bs$ if and only if  the quadruple $(a,r,b,s)\in X \times \Qb_{>0} \times Y \times \Qb_{>0}$ satisfies
\begin{itemize}
 \item $\ball{a}{r} \ballsub \ball{a'}{r'}$ for some $(n,a',r')\in U$, and
 \item $\ball ar \ballsub \ball{a_{i}}{2^{-h(n)-1}}$ and $\ball{y_{i}}{2^{-n}} \ballsub \ball{b}{s}$ for some $n,i\in\Nb$.
\end{itemize}
The code $\Phi$ exists because the above conditions can be described by a $\Sigma^{0}_{1}$ formula.
It is easy to see that $\Phi$ satisfies \ref{cftwo} and \ref{cfthree} of Definition~\ref{def-continuous}.
We show that $\Phi$ also satisfies \ref{cfone}.
Assume that $\ball ar \fcode \Phi \ball bs$ and $\ball ar \fcode \Phi \ball{b'}{s'}$, where in each case the second condition is satisfied by $n,a_{i},y_{i}$ and $m,a_{j},y_{j}$, respectively.
Without loss of generality, we may assume that $n\le m$ and that $h$ is non-decreasing.
Since $a\in \ball{a_{i}}{2^{-h(n)-1}}\cap \ball{a_{j}}{2^{-h(m)-1}}$, we have $d_{\MC{X}}(a_{i},a_{j})<2^{-h(n)}$.
Then $d_{\MC{Y}}(y_{i},y_{j})<2^{-n}$, and thus $y_{j}\in \ball{y_{i}}{2^{-n}}\subseteq \ball{b}{s}$.
Hence $y_{j}\in \ball{b}{s}\cap\ball{b'}{s'}$, which implies that $d_{\MC{Y}}(b,b')\leq s+s'$.

Finally, we check that $\MC{U}=\dom(f)$ and $f(a_{i})=y_{i}$.
If $x\notin \MC{U}$, then $x\notin\dom(f)$ by the first condition, thus we have $\dom(f)\subseteq\MC{U}$.
For the converse, assume that $x\in\MC{U}$ and $\varepsilon>0$.
Take $(n',a',r')\in U$ so that $x\in \ball{a'}{r'}$ and $n\in\Nb$ so that $2^{-n+1}<\varepsilon$, and then take $r\in \Qb_{>0}$ so that $\ball{x}{2r}\ballsub \ball{a'}{r'}$ and $r<2^{-h(n)-1}$.
Pick a point $a=a_{i}\in \MC{U}\cap X$ so that $d_{\MC{X}}(x,a)<r$ and a point $b\in Y$ so that $d_{\MC{Y}}(y_{i},b)<2^{-n}$, and put $s=2^{-n+1}$. Then we have $\ball{a}{r} \fcode \Phi \ball{b}{s}$ and $x\in \ball{a}{r}$. Since $\varepsilon$ was arbitrary and $s<\varepsilon$, this means that $x\in\dom(f)$.
If $x=a_{j}$ for some $j\in\Nb$ in the above, then $i$ can be always chosen to be equal to $j$, and thus one can find $\ball{a}{r} \fcode \Phi \ball{b}{s}$ with $a_{j}\in\ball ar$, $y_{j}\in \ball bs$ and $s<\varepsilon$ for all $\varepsilon>0$. This means that $f(a_{j})=y_{j}$ for all $j\in\Nb$.
\end{proof}

We also remark that continuous partial functions can be patched together, provided they are mutually compatible.

\begin{Lemma}[$\rca$; {\cite[Lemma~3.31]{Y-TMP}}]\label{lem-code-for-conti2}
Let $\spc X = \widehat X$ and $\spc Y = \widehat Y$ be complete separable metric spaces.
Assume that $(\MC{U}_{i},f_{i})_{i\in \Nb}$ is a sequence of pairs of (codes for) open sets $\MC{U}_{i}\subseteq\spc X$ and continuous functions $f_{i}:\MC{U}_{i}\to \spc Y$ such that $f_{i}(x)=f_{j}(x)$ for any $i,j\in \Nb$ and $x\in \MC{U}_{i}\cap \MC{U}_{j}$.
Then, there exists (a code for) a continuous function $\bar f:\bigcup_{i\in\Nb}\MC{U}_{i}\to \spc Y$ such that $\bar f(x)=f_{i}(x)$ for all $i\in \Nb$ and $x\in \MC{U}_{i}$.
\end{Lemma}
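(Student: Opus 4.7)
The plan is to construct the code $\bar\Phi$ for $\bar f$ directly from the given $\Phi_i$, adding a side condition that forces each admitted ball to lie inside some $\MC{U}_i$. Specifically, I would declare $\ball{a}{r} \fcode{\bar\Phi} \ball{b}{s}$ to hold if and only if there exist $i \in \Nb$ and a rational ball $\ball{a'}{r'}$ enumerated in the code for $\MC{U}_i$ such that $\ball{a}{r} \ballsub \ball{a'}{r'}$ and $\ball{a}{r} \fcode{\Phi_i} \ball{b}{s}$. Since this is a $\Sigma^0_1$ condition in the given data, the code $\bar\Phi$ exists in $\rca$.

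Next I would verify the three clauses of Definition~\ref{def-continuous}. Clauses \ref{cftwo} and \ref{cfthree} follow at once from the corresponding properties of each $\Phi_i$ together with the transitivity of $\ballsub$. The substantive step is \ref{cfone}, and it is the only place where the compatibility hypothesis enters: if $\ball{a}{r} \fcode{\bar\Phi} \ball{b}{s}$ is witnessed by $i$ and $\ball{a}{r} \fcode{\bar\Phi} \ball{b'}{s'}$ is witnessed by $j$, then $\ball{a}{r}$ is rationally contained in both $\MC{U}_i$ and $\MC{U}_j$; in particular, the center $a$ lies in $\MC{U}_i \cap \MC{U}_j$, so $f_i(a) = f_j(a)$, and this common value belongs to both $\overline{\ball{b}{s}}$ and $\overline{\ball{b'}{s'}}$, whence $d(b,b') \leq s + s'$.

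Finally, I would check that $\dom(\bar f) = \bigcup_i \MC{U}_i$ and that $\bar f(x) = f_i(x)$ for $x \in \MC{U}_i$. The inclusion $\dom(\bar f) \subseteq \bigcup_i \MC{U}_i$ is built into the definition of $\bar\Phi$. Conversely, given $x \in \MC{U}_i$ and $\varepsilon > 0$, the definition of $\dom(f_i)$ supplies $\ball{a}{r} \fcode{\Phi_i} \ball{b}{s}$ with $x \in \ball{a}{r}$ and $s < \varepsilon$; using density of $X$ and openness of $\MC{U}_i$, I shrink $\ball{a}{r}$ to a rational ball $\ball{a''}{r''}$ with $x \in \ball{a''}{r''}$, $\ball{a''}{r''} \ballsub \ball{a}{r}$, and $\ball{a''}{r''} \ballsub \ball{a'}{r'}$ for some enumerated $\ball{a'}{r'}$ in the code of $\MC{U}_i$. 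Clause \ref{cftwo} applied to $\Phi_i$ then gives $\ball{a''}{r''} \fcode{\bar\Phi} \ball{b}{s}$, which simultaneously witnesses $x \in \dom(\bar f)$ and, as $\varepsilon$ varies, $\bar f(x) = f_i(x)$. The whole argument is routine once $\bar\Phi$ is written down; the only nontrivial ingredient is the use of the compatibility hypothesis in clause \ref{cfone}.
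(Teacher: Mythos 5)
Your proposal is correct. Note that the paper itself gives no proof of this lemma---it is quoted from \cite[Lemma~3.31]{Y-TMP}---so there is nothing internal to compare against; your construction (admit $\ball{a}{r} \fcode{\bar\Phi} \ball{b}{s}$ only when $\ball{a}{r}$ is $\ballsub$-inside some ball enumerated in the code of some $\MC{U}_i$ and $\ball{a}{r} \fcode{\Phi_i} \ball{b}{s}$) is the standard gluing argument, and all the verifications go through: the side condition guarantees the center $a$ lies in $\MC{U}_i\cap\MC{U}_j=\dom(f_i)\cap\dom(f_j)$ so that compatibility yields \ref{cfone}, and the shrinking argument for the domain and values is routine.
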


Notice that the above lemmas also hold uniformly.  For Lemma~\ref{lem-code-for-conti1}, if we are given a sequence $( \la a_{i,j},y_{i,j} \ra_{i\in \Nb}, \MC{U}_{j}, h_{j} : j\in\Nb)$, where $\la a_{i,j},y_{i,j} \ra_{i\in \Nb}$, $ \MC{U}_{j}$ and $h_{j}$ satisfy the conditions of Lemma~\ref{lem-code-for-conti1} for all $j\in\Nb$, then we may obtain a sequence of continuous functions $(f_{j}:\MC{U}_{j}\to \spc Y)_{j\in\Nb}$.

Finally, we consider products of finitely many metric spaces $(\spc{X}_i : i < n)$. In \cite[Example~II.5.4]{SimpsonSOSOA}, product spaces are defined via the Euclidean metric, but for our purposes it is more convenient to use the `max' metric instead.

\begin{Definition}[$\rca$; {see \cite[Example~II.5.4]{SimpsonSOSOA}}]\label{def-ProdSpace}
Let $(\spc{X}_i : i < n)$ be complete separable metric spaces coded by dense sets $(X_i : i < n)$ and metrics $(d_i : i < n)$.  The \emph{product space} $\spc X = \prod_{i<n}\spc{X}_i$ is the complete separable metric space coded by the dense set $X = \prod_{i<n}X_i$ and the metric $d \colon X \times X \imp \Rb$ where
\begin{align*}
d(\vec a, \vec b) = \max_{i<n}d_i(a_i,b_i).
\end{align*}
\end{Definition}

Products of metric spaces will be very useful to us.
For example, we may use them to define Banach spaces in the context of second-order arithmetic: these are simply metric spaces $\spc X$ equipped with a designated element ${\bm 0} \in \spc X$ and total, continuous functions ${+} \colon \spc X \times \spc X \to \spc X$ and ${\cdot} \colon \Rb \times \spc X \to \spc X$ such that if we define $\|x\| = d(x,{\bm 0})$, then $(\spc X,{\bm 0},{+},{\cdot},\|\cdot \|)$ satisfies the standard definition of a normed vector space.
On occasion we will make free use of the fact that some of the spaces we consider, such as ${\mathcal C}\big ( [0,1]\big )$, come equipped with a standard Banach space structure.

If $\spc X = \prod_{i<n}\spc{X}_i$ is the product of the complete separable metric spaces $(\spc{X}_i : i < n)$, then, officially, the basic open sets in $\spc X$ are those of the form $\ball{\vec a}{r}$, where $\vec a \in \prod_{i<n}X_i$ and $r \in \Qb_{>0}$.  However, when working with $\spc X$, it is often more convenient to think of the basic open sets as being the sets of the form $\prod_{i<n}\ball{a_i}{r_i}$, where each $\ball{a_i}{r_i}$ is a basic open set in $\spc{X}_i$.  The following lemma says that in $\rca$ we can uniformly translate between the two styles of basic open sets when coding open sets.  Therefore we may always use the style of open set that is most convenient.

\begin{Lemma}[$\rca$]
Let $(\spc{X}_i : i < n)$ be complete separable metric spaces coded by dense sets $(X_i : i < n)$ and metrics $(d_i : i < n)$.  Let $X = \prod_{i<n}X_i$ and $d$ code $\spc X = \prod_{i<n}\spc{X}_i$ as in Definition~\ref{def-ProdSpace}.  
\begin{itemize}
\item[(i)] There is a function $f \colon \prod_{i<n}(X_i \times \Qb_{>0}) \times \Nb \imp X \times \Qb_{>0}$ such that, for every
\[\sel a_0, r_0, \dots, a_{n-1}, r_{n-1} \ser \in \prod_{i<n} (X_i \times \Qb_{>0}),\]
$f(a_0, r_0, \dots, a_{n-1}, r_{n-1}, \cdot)$ enumerates an official code for the unofficial basic open set
\[\prod_{i<n}\ball{a_i}{r_i} \subseteq \spc X.\]

\item[(ii)] For every $\la \vec a, r \ra \in X \times \Qb_{>0}$, $\ball{\vec a}{r} = \prod_{i < n}\ball{a_i}{r}$.  Thus the function $g \colon X \times \Qb_{>0} \imp \prod_{i<n} (X_i \times \Qb_{>0})$ given by $g(\vec a, r) = \sel a_0, r, \dots, a_{n-1}, r \ser$ translates the code $\la \vec a, r \ra$ for the official basic open set $\ball{\vec a}{r}$ to the code $\sel a_0, r, \dots, a_{n-1}, r \ser$ for the equivalent unofficial basic open set $\prod_{i < n}\ball{a_i}{r}$.
\end{itemize}
\end{Lemma}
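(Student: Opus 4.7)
The plan is to handle (ii) first by a direct unfolding of the max metric, and then exploit the $\Sigma^0_1$-definability of membership in $\prod_{i<n}\ball{a_i}{r_i}$ to produce the open-set code required by (i).

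For (ii), a point $\vec x = (x_i)_{i<n} \in \spc X$ lies in $\ball{\vec a}{r}$ iff $d(\vec x, \vec a) < r$, and by Definition~\ref{def-ProdSpace} this is $\max_{i<n} d_i(x_i, a_i) < r$. Hence $\vec x \in \ball{\vec a}{r}$ iff $d_i(x_i, a_i) < r$ for every $i < n$ iff $\vec x \in \prod_{i<n} \ball{a_i}{r}$, and the map $g(\vec a, r) = \sel a_0, r, \ldots, a_{n-1}, r \ser$ does the required translation.

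For (i), the strategy is to enumerate, uniformly in $\sel a_0, r_0, \ldots, a_{n-1}, r_{n-1} \ser$, the pairs $\sel \vec b, s \ser \in X \times \Qb_{>0}$ that satisfy
\begin{equation*}
(\forall i < n)\bigl(d_i(b_i, a_i) + s < r_i\bigr),
\end{equation*}
i.e., those rational balls $\ball{\vec b}{s}$ that are formally contained in $\prod_{i<n}\ball{a_i}{r_i}$ via the per-coordinate containment $\ballsub$. Since each $d_i(b_i, a_i)$ is a real presented by a rapidly converging Cauchy sequence of rationals, the strict inequality $d_i(b_i, a_i) + s < r_i$ is $\Sigma^0_1$ (witnessed by a sufficiently late rational upper approximation), and the finite conjunction over $i < n$ remains $\Sigma^0_1$. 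By the uniform version of Lemma~\ref{lem-code-for-open}, applied to this $\Sigma^0_1$ formula in $\vec b$ and $s$, we obtain a code for an open set, which we take as the output of $f$.

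To verify correctness, one direction uses the triangle inequality in each coordinate: if $\vec x \in \ball{\vec b}{s}$ for some enumerated pair, then $d_i(x_i, a_i) \leq d_i(x_i, b_i) + d_i(b_i, a_i) < s + (r_i - s) = r_i$, so $\vec x \in \prod_{i<n}\ball{a_i}{r_i}$. Conversely, if $\vec x \in \prod_{i<n}\ball{a_i}{r_i}$, choose $\vec b \in X$ approximating $\vec x$ closely in each coordinate and a rational $s > 0$ smaller than $\min_{i<n}\tfrac12(r_i - d_i(x_i, a_i))$; then $d_i(b_i, a_i) + s < r_i$ for each $i$, so the enumerated ball $\ball{\vec b}{s}$ contains $\vec x$.

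The step that requires the most care is confirming that the $\Sigma^0_1$ predicate above is available to Lemma~\ref{lem-code-for-open} uniformly in the parameters $\sel a_0, r_0, \ldots, a_{n-1}, r_{n-1} \ser$, so that the resulting enumeration can itself be produced by a single function in $\rca$. This is routine given the coding of reals via rapidly converging Cauchy sequences, but it is where the actual formal work in $\rca$ sits; the remaining geometric content is a one-line triangle-inequality argument.
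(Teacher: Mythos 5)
Your proof is correct and follows essentially the same route as the paper: part (ii) is the same unfolding of the max metric, and part (i) rests on the same observation that $\prod_{i<n}\ball{a_i}{r_i}$ is $\Sigma^0_1$ uniformly in the parameters together with (the uniform version of) Lemma~\ref{lem-code-for-open}. The only difference is presentational: the paper applies Lemma~\ref{lem-code-for-open} directly to the $\Sigma^0_1$ point-predicate $(\forall i<n)(d_i(x_i,a_i)<r_i)$, which makes your explicit enumeration of formally included balls and the triangle-inequality verification unnecessary (and note that, as stated, Lemma~\ref{lem-code-for-open} takes a predicate on points of the space rather than on pairs $\la\vec b,s\ra$, so your invocation is really a hand-rolled version of its proof).
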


\begin{proof}
(i) The function $f$ exists by Lemma~\ref{lem-code-for-open} because the unofficial basic open set $\prod_{i<n}\ball{a_i}{r_i}$ is $\Sigma^0_1$ uniformly in $(a_0, r_0, \dots, a_{n-1}, r_{n-1})$.

(ii) Let $\vec x \in \spc X$.  Then $ \vec x \in \ball{\vec a}{r}$ if and only if $d(\vec a, \vec x) < r$ if and only if $\max_{i < n}d_i(a_i, x_i) < r$, if and only if $(\forall i < n)(d_i(a_i, x_i) < r)$, if and only if $\vec x \in \prod_{i < n}\ball{a_i}{r}$.  Thus $\ball{\vec a}{r} = \prod_{i < n}\ball{a_i}{r}$.
\end{proof}

\section{Semi-continuous functions in second-order arithmetic}\label{secSemiCont}

Although continuous functions have been extensively studied in the context of second-order arithmetic, lower semi-continuous functions have received less attention.
Fortunately, they admit a natural representation in the spirit of Definition \ref{def-continuous}.

\begin{Definition}[$\rca$]\label{def-lsc}
Let $\spc X = \widehat X$ be a complete separable metric space.  A \emph{code for a lower semi-continuous function} $\fnc\colon \spc X \to \overline \Rb$ is a set ${\Psi} \subseteq \Nb \times X \times \Qb_{>0} \times \Qb$ that satisfies the properties below.  Let us write $\ball{a}{r} \vcode \Psi q$ for $\exists n( ( n, a, r, q ) \in {\Psi})$.  Then, for all $a, a' \in X$, all $q, q' \in \Qb$, and all $r, r' \in \Qb_{>0}$, $\Psi$ must satisfy:

\begin{enumerate}[label=({\sc lsc}\arabic*)]

\item if $\ball{a}{r} \vcode \Psi q$ and $\ball{a'}{r'}  \ballsub \ball{a}{r}$, then $\ball{a'}{r'} \vcode \Psi q$, and

\item if $\ball{a}{r} \vcode \Psi q$ and $q' < q$, then $\ball{a}{r} \vcode \Psi q'$.

\end{enumerate}

For $x\in \spc X$ and $y\in \overline \Rb$ we define $\fnc (x) = y$ if
\begin{align*}
y = \sup\{q \in \Qb : (\exists \la a,r \ra \in X \times \Qb_{>0})(\ball{a}{r} \vcode \Psi q \andd d(x,a) < r)\}.
\end{align*}
The relation $f$ thus defined is a {\em lower semi-continuous function} if $(\forall x \in \spc X)(\exists y \in \overline\Rb)(\fnc(x) = y)$.
The {\em support} of $f$, denoted $\lscsupp(f)$, is the collection $\{x\in \spc X : f(x) \in \Rb \}$.  We do not assume that $\lscsupp(f)$ exists as any kind of coded set in $\rca$.  An assertion of the form $x \in \lscsupp(f)$ is to be taken as an abbreviation for $(\exists y \in \Rb)(f(x) = y)$.

If $b \in \Qb$ is such that for every $a \in X$ and $r \in  \Qb_{>0}$ there is $m\in \Nb$ such that $( m, a, r, b ) \in \Psi$, we say that $\Psi$ is a code of a lower semi-continuous function $\fnc \colon \spc X \to \overline{\Rb}_{\geq b}$.
If $b = 0$, we call the function coded by $\Psi$ a {\em potential}.
\end{Definition}

Suppose that $\spc X$ is a complete separable metric space.  As with Definition~\ref{def-continuous}, the idea behind Definition~\ref{def-lsc} is that $\Psi$ enumerates pairs $\la B_r(a), q \ra$ with the property that if $f$ is the function being coded by $\Psi$, then $f$ maps $\ball{a}{r} \cap \dom(f)$ into $[q, \infty]$.  One may define \emph{upper semi-continuous partial functions} from $\spc X$ to $\overline \Rb$ by appropriately dualizing Definition~\ref{def-lsc}, in which case we write $\ball ar \ucode \Psi q$ for the dual of $\ball ar \vcode \Psi q$.
We remark that according to our definition, any code for a lower semi-continuous function defines a function $f \colon \spc X \to \overline \Rb$, as, recalling that $\sup \varnothing = -\infty$, we see that every subset of $\Rb$ has a supremum in $\overline \Rb$.
However, this fact is not provable in $\rca$.
For this reason, lower semi-continuous functions are defined with the explicit assumption that $\fnc$ is defined everywhere.
See Remark~\ref{rem-LSC} for further discussion.

Next we show that $\rca$ proves a version of Lemma~\ref{LemmSCF}, which we take as evidence that Definition~\ref{def-lsc} is a reasonable definition of semi-continuity for use in $\rca$.
Indeed, we prove an $\rca$ version of the fact that a function $f \colon \spc X \imp \overline \Rb$ is lower semi-continuous if and only if $\{ \la x, y \ra \in \spc X \times \Rb : f(x) \leq y\}$ is closed. 

\begin{Proposition}[$\rca$]\label{prop-LSCandGraph}
Let $\spc X$ be a complete separable metric space.
\begin{enumerate}[label=(\roman*)]
\item \label{itLSCandGraphOne}  If $f \colon \spc X \imp \overline \Rb$ is lower semi-continuous, then $\{\la x, y \ra \in \spc X \times \Rb : f(x) \leq y\}$ is closed.

\item\label{itLSCandGraphTwo}  If $\MC C \subseteq \spc X \times \Rb$ is a closed set such that
	\begin{itemize}
	\item for every $x \in \spc X$ and $y,z \in \Rb$, if $\la x, y \ra \in \MC C$ and $z \geq y$, then $\la x, z \ra \in \MC C$, and
	\item for every $x \in \spc X$, $\inf\{y \in \Rb : \la x, y \ra \in \MC C\}$ exists
	\end{itemize}
then there is a lower semi-continuous function $f \colon \spc X \imp \overline \Rb $ such that $\MC C = \{\la x, y \ra \in \spc X \times \Rb : f(x) \leq y\}$.
\end{enumerate}
\end{Proposition}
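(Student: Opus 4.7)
For part~\ref{itLSCandGraphOne}, it suffices to show that the complement $\{(x,y) \in \spc X \times \Rb : f(x) > y\}$ is an open set. Unfolding the definition of the code $\Psi$ of $f$, this set is defined by the $\Sigma^0_1$ condition
\[\exists q \in \Qb\, \exists a \in X\, \exists r \in \Qb_{>0}\, \bl q > y \andd d(x,a) < r \andd \ball{a}{r}\vcode{\Psi} q\br,\]
which is invariant under the equivalence on $\spc X \times \Rb$; applying Lemma~\ref{lem-code-for-open} to the product space then produces the required open set, and its complement is the desired closed set.

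For part~\ref{itLSCandGraphTwo}, let $\MC U = (\spc X \times \Rb) \setminus \MC C$ be the complementary open set, whose code we may assume enumerates basic open sets of the product form $\ball{a''}{r''}\times (u'',v'')$ in $\spc X \times \Rb$. I will define $\Psi$ by declaring $\ball{a}{r}\vcode{\Psi} q$ to hold precisely when the enumeration of the code of $\MC U$ produces some such $\ball{a''}{r''}\times (u'',v'')$ with $\ball{a}{r}\ballsub \ball{a''}{r''}$ and $q < v''$. This $\Sigma^0_1$ condition yields a set $\Psi$ in $\rca$, and (LSC1), (LSC2) follow at once from transitivity of $\ballsub$ and the observation that any witness for $q$ also witnesses any $q' < q$. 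The upward closure of $\MC C$ in the second coordinate is equivalent to downward closure of $\MC U$, which, combined with openness, upgrades any rectangle $\ball{a''}{r''}\times (u'',v'')$ in the code of $\MC U$ to $\ball{a''}{r''}\times (-\infty,v'') \subseteq \MC U$; this is the key property driving the construction.

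Setting $S_x = \{q \in \Qb : \exists (a,r)\,(\ball{a}{r}\vcode{\Psi}q \andd d(x,a) < r)\}$ and $f^*(x) = \inf\{y : (x,y)\in \MC C\}$, which exists in $\Rb$ by hypothesis, the strengthening above immediately yields $q \in S_x \Rightarrow q \leq f^*(x)$. Conversely, for any rational $q < f^*(x)$ the point $(x,q)$ lies in $\MC U$, so the code of $\MC U$ exhibits a rectangle $\ball{a''}{r''}\times (u'',v'')$ containing $(x,q)$; choosing a rational $q' \in (q, v'')$ together with $\ball{a}{r}$ satisfying $x \in \ball{a}{r}\ballsub \ball{a''}{r''}$ produces $q' \in S_x$ with $q' > q$. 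Thus $\sup S_x = f^*(x)$, so $\Psi$ codes a lower semi-continuous function $f$ with $f(x) = f^*(x)$ everywhere, and in particular the supremum required by Definition~\ref{def-lsc} exists.

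It remains to check $\MC C = \{(x,y) : f(x) \leq y\}$. The forward inclusion is immediate from the definition of infimum. For the reverse, if $y > f(x)$ we pick $z \in [f(x), y)$ with $(x,z)\in \MC C$ (available by the infimum property) and apply upward closure. The borderline case $y = f(x)$, where we must show $(x, f(x)) \in \MC C$, is the main obstacle, as it is the one step that essentially requires the topological (not merely order-theoretic) closedness of $\MC C$: assuming for contradiction that $(x, f(x)) \in \MC U$, the code of $\MC U$ produces a basic rectangle $\ball{a}{r}\times(u,v)$ around $(x, f(x))$ with $u < f(x) < v$, and the infimum property then yields some $z \in [f(x), v) \subseteq (u, v)$ with $(x,z) \in \MC C$, contradicting $\ball{a}{r}\times(u,v) \subseteq \MC U$. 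All of this manipulation is purely computable from the given codes, so the proof can be carried out within $\rca$.
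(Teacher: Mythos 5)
Your proof is correct and follows essentially the same route as the paper's: part~\ref{itLSCandGraphOne} via the $\Sigma^0_1$ definition of $\{(x,y) : f(x) > y\}$ and Lemma~\ref{lem-code-for-open}, and part~\ref{itLSCandGraphTwo} via the same code $\Psi$ built from the rectangles enumerated in the complement of $\MC C$, with the same two-sided verification that $\sup S_x$ equals the sectionwise infimum. The only difference is that you spell out the borderline case $y = f(x)$ (that the infimum is attained, using topological closedness of $\MC C$), which the paper dismisses as ``easy to see''; this is a welcome bit of extra care, not a divergence.
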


\begin{proof}
\noindent \ref{itLSCandGraphOne}
Let $f$ be coded by $\Phi$.
Then $f(x)>y$ if and only if there exists $(a,r,q)\in X\times\Qb_{>0}\times\Qb$ such that $\ball{a}{r} \vcode \Phi q \andd d(x,a) < r\andd y<q$. Thus, $\{\la x, y \ra \in \spc X \times \Rb : f(x) > y\}$ is $\Sigma^{0}_{1}$-definable, hence it is open by Lemma~\ref{lem-code-for-open}.

\noindent \ref{itLSCandGraphTwo} Let $\MC C$ be a closed set as in the statement of \ref{itLSCandGraphTwo}, and let $\MC U$ be the complement of $\MC C$.  Enumerate a code $\Phi$ for a lower semi-continuous function $f$ by enumerating $\ball{a}{r} \vcode \Phi q$ whenever $\MC U$ enumerates an open set $\ball{b}{s} \times (u, v)$ with $\ball{a}{r} \ballsub \ball{b}{s}$ and $q < v$.  We show that, for every $x \in \spc X$, $f(x)$ exists and equals $\inf\{z \in \Rb : \la x, z \ra \in \MC C\}$ (which exists by the assumptions on $\MC C$).  So let $x \in \spc X$, and let $y = \inf\{z \in \Rb : \la x, z \ra \in \MC C\}$.  We need to show that
\begin{align*}
y = \sup\{q \in \Qb : (\exists \la a,r \ra \in X \times \Qb_{>0})(\ball{a}{r} \vcode \Phi q \andd d(x,a) < r)\}.
\end{align*}

Suppose that $q \in \Qb$ is such that there is a ball $\ball{a}{r} \subseteq \spc X$ with $x \in \ball{a}{r}$ and $\ball{a}{r} \vcode \Phi q$.  By the definition of $\Phi$, there is a $\ball{b}{s} \times (u, v) \subseteq \MC U$ where $\ball{a}{r} \ballsub \ball{b}{s}$ and $q < v$.  From this we can conclude that if $z \in \Rb$ is such that $\la x, z \ra \in \MC C$, then $q < z$.  If not, then by the first assumption on $\MC C$, there would be a $z$ such that $\la x, z \ra \in \MC C$ and $z \in (u,v)$.  This implies that $\la x, z \ra \in \ball{b}{s} \times (u,v) \subseteq \MC U$, which contradicts that $\MC U$ is the complement of $\MC C$.  Thus $q \leq y$ because $y = \inf\{z \in \Rb : \la x, z \ra \in \MC C\}$.

We have shown that $y$ is an upper bound on the $q \in \Qb$ for which there is a $\ball{a}{r} \subseteq \spc X$ with $x \in \ball{a}{r}$ and $\ball{a}{r} \vcode \Phi q$.  We need to show that $y$ is least.  So consider a $z < y$.  Then $\la x, z \ra \notin \MC C$ because $y = \inf\{z \in \Rb : \la x, z \ra \in \MC C\}$.  Then there is a $\ball{a}{s} \times (u,v)$ enumerated into $\MC U$ that contains $\la x, z \ra$.  Let $r < s$ be such that $x \in \ball{a}{r} \ballsub \ball{a}{s}$, and let $q \in \Qb \cap (z,v)$.  Then $\ball{a}{r} \vcode \Phi q$ by the definition of $\Phi$.  Thus $x \in \ball{a}{r}$ and $\ball{a}{r} \vcode \Phi q$, but $z < q$.  So $z$ is not an upper bound on the $q \in \Qb$ for which there is a $\ball{a}{r} \subseteq \spc X$ with $x \in \ball{a}{r}$ and $\ball{a}{r} \vcode \Phi q$.  Thus $y$ is indeed the required supremum.

We have shown that, for all $x \in \spc X$, $f(x) = \inf\{z \in \Rb : \la x, z \ra \in \MC C\}$.  It is now easy to see that, for $\la x, y \ra \in \spc X \times \Rb$, $f(x) \leq y$ if and only if $\inf\{z \in \Rb : \la x, z \ra \in \MC C\} \leq y$ if and only if $\la x, y \ra \in \MC C$.
\end{proof}

The analogous properties of upper semi-continuous functions can be proved by dualizing Proposition~\ref{prop-LSCandGraph}.

The next proposition is an $\rca$ version of the fact that a function $f \colon \spc X \imp \Rb$ is continuous if and only if it is upper semi-continuous and lower semi-continuous.

\begin{Proposition}[$\rca$]\label{prop-LSCchar}
Let $\spc X$ be a complete separable metric space.
\begin{enumerate}[label=(\roman*)]
\item \label{itContisLSC} If $f \colon \spc X \imp \Rb$ is continuous, then there are a lower semi-continuous $g \colon \spc X \imp \Rb$ and an upper semi-continuous $h \colon \spc X \imp \Rb$ such that $(\forall x \in \spc X)(f(x) = g(x) = h(x))$.

\item \label{itLSCisCont}  If $g \colon \spc X \imp \overline \Rb$ is lower semi-continuous, $h \colon \spc X \imp \overline \Rb$ is upper semi-continuous, and $(\forall x \in \spc X)(g(x) = h(x))$, then there is a continuous partial $f \colon \spc X \imp \Rb$ with $\dom(f) = \lscsupp(g) = \lscsupp(h)$ such that $(\forall x \in \dom(f))(f(x) = g(x) = h(x))$.
\end{enumerate}
\end{Proposition}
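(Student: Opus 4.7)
My plan is to translate codes directly in each direction; in both cases the relevant set is $\Sigma^0_1$-definable from the data, so Lemma~\ref{lem-code-for-open} guarantees the code exists in $\rca$, and the work is confined to verifying the coherence clauses.

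For part \ref{itContisLSC}, given a code $\Phi$ for the continuous $f$, I would define $\Psi_g$ by enumerating $\ball{a}{r} \vcode{\Psi_g} q$ whenever $\ball{a}{r} \fcode{\Phi} (u, v)$ for some $u, v \in \Qb$ with $q \leq u$, and $\Psi_h$ dually. Clauses (\textsc{lsc}1) and (\textsc{lsc}2) follow respectively from (\textsc{cf}2) of $\Phi$ and from monotonicity in $q$. The identity $g(x) = f(x)$ is verified by two inequalities: any $\ball{a}{r} \vcode{\Psi_g} q$ with $x \in \ball{a}{r}$ gives $q \leq u \leq f(x)$ for the witnessing $(u, v)$, while continuity supplies balls around $x$ with $u$ arbitrarily close to $f(x)$ from below. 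The argument for $h$ is dual.

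For part \ref{itLSCisCont}, given $\Psi_g$ and $\Psi_h$ with $g = h$ on $\spc X$, I would define $\Phi$ by enumerating $\ball{a}{r} \fcode{\Phi} (u, v)$ whenever $u, v \in \Qb$ satisfy $u < v$, $\ball{a}{r} \vcode{\Psi_g} u$, and $\ball{a}{r} \ucode{\Psi_h} v$. Condition (\textsc{cf}2) is inherited from (\textsc{lsc}1) and its dual, and (\textsc{cf}3) from (\textsc{lsc}2) and its dual, using that $\ball{b}{s} \ballsub \ball{b'}{s'}$ in $\Rb$ yields strict inequalities $u' < u < v < v'$. For $\dom(f) \supseteq \lscsupp(g)$: given $x \in \lscsupp(g)$ and $\varepsilon > 0$, pick rationals $q_1, q_2$ with $g(x) - \varepsilon < q_1 < g(x) = h(x) < q_2 < h(x) + \varepsilon$; using the supremum definition of $g(x)$ (respectively, infimum for $h(x)$) obtain balls around $x$ witnessing $\ball{a_1}{r_1} \vcode{\Psi_g} q_1$ and $\ball{a_2}{r_2} \ucode{\Psi_h} q_2$, and intersect them via (\textsc{lsc}1) to get $\ball{a}{r}$ with $x \in \ball{a}{r} \fcode{\Phi} (q_1, q_2)$ and $q_2 - q_1 < 2\varepsilon$. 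For $\dom(f) \subseteq \lscsupp(g)$: any $\ball{a}{r} \fcode{\Phi} (u, v)$ with $x \in \ball{a}{r}$ forces $g(x) = h(x) \in [u, v] \subseteq \Rb$. The equality $f(x) = g(x) = h(x)$ on $\dom(f)$ then follows from the uniqueness clause in Definition~\ref{def-continuous}, since $g(x)$ lies in every $[u, v]$ with $x \in \ball{a}{r} \fcode{\Phi} (u, v)$.

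The main obstacle is condition (\textsc{cf}1) for $\Phi$ in part \ref{itLSCisCont}: if a single ball $\ball{a}{r}$ is sent to two target intervals $(u_i, v_i)$, we must show that their closures intersect. The key point is that the center $a$ lies in $\ball{a}{r}$, so the finite rational witnesses give $u_i \leq g(a)$ and $h(a) \leq v_i$ for $i = 1, 2$; combined with $g(a) = h(a)$, this puts $g(a)$ inside both closed intervals, so they share a point and the metric inequality of (\textsc{cf}1) holds. The same observation shows that every ball enumerated into $\Phi$ meets $\lscsupp(g)$, which pins $\dom(f)$ to be exactly the support.
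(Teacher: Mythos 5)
Your proposal is correct and follows essentially the same route as the paper: translate the codes directly in each direction, check the coherence clauses, and verify the values via the sup/inf definitions. The only cosmetic difference is that the paper shrinks balls (requiring $\ball{a}{r}\ballsub\ball{b}{s}$ and $[p,q]\subseteq(u,v)$) to get strict containments, whereas you exploit the center $a\in\ball{a}{r}$ to place $g(a)=h(a)$ in both closed target intervals for (\textsc{cf}1); both arguments are valid.
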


\begin{proof}
\noindent \ref{itContisLSC} Let $\Phi$ be a code for a continuous $f \colon \spc X \imp \Rb$.  Define a code $\Gamma$ for a lower semi-continuous $g \colon \spc X \imp \overline \Rb$ by enumerating $\ball{a}{r} \vcode \Gamma q$ whenever $\ball{b}{s} \fcode \Phi (u,v)$ for a $\ball{b}{s}$ with $\ball{a}{r} \ballsub \ball{b}{s}$ and a $(u,v)$ with $q < u$.  It is then easy to check that $g(x)$ is defined and equal to $f(x)$ for all $x \in \spc X$.  The upper semi-continuous partial function $h \colon \spc X \imp \overline \Rb$ such that $(\forall x \in \dom (f)) (f(x) = h(x))$ is defined dually.\\

\noindent \ref{itLSCisCont} Let $\Gamma$ be a code for a lower semi-continuous $g \colon \spc X \imp \overline \Rb$ and let $\Psi$ be a code for an upper semi-continuous $h \colon \spc X \imp \overline \Rb$ such that $(\forall x \in \spc X)(g(x) = h(x))$.  Define a code $\Phi$ for a continuous partial function $f \colon \spc X \imp \Rb$ by enumerating $\ball{a}{r} \fcode \Phi (u, v)$ whenever there are $\ball{b}{s}, \ball{c}{t} \subseteq \spc X$ and $p, q \in \Qb$ such that $\ball{a}{r} \ballsub \ball{b}{s}$, $\ball{a}{r} \ballsub \ball{c}{t}$, $[p,q] \subseteq (u,v)$, $\ball{b}{s} \vcode \Gamma p$, and $\ball{c}{t} \ucode \Psi q$.  It is easy to see that $\Phi$ satisfies \ref{cftwo} and \ref{cfthree} of Definition~\ref{def-continuous}.  To see that $\Phi$ satisfies \ref{cfone} of Definition~\ref{def-continuous}, first observe that if $\ball{a}{r} \fcode \Phi (u,v)$, then $g(a) > u$ and $h(a) < v$.  However, $g(a) = h(a)$, so their common value must be finite and in the interval $(u, v)$.  Therefore, if $\ball{a}{r} \fcode \Phi (u, v)$ and $\ball{a}{r} \fcode \Phi (u', v')$, then $(u, v)$ and $(u', v')$ both contain $g(a)$, and therefore $(u, v)$ and $(u', v')$ intersect.

We show that $\dom(f) = \lscsupp(g) = \lscsupp(h)$ and that $(\forall x \in \dom(f))(f(x) = g(x) = h(x))$.  First, suppose that $x \notin \lscsupp(g)$.  Then $g(x) = \pm \infty$.  Suppose that $g(x) = -\infty$.  The only way this can happen is if there is no $\ball{b}{s} \vcode \Gamma p$ with $x \in \ball{b}{s}$.  But if there is no $\ball{b}{s} \vcode \Gamma p$ with $x \in \ball{b}{s}$, then there is also no $\ball{a}{r} \fcode \Phi (u, v)$ with $x \in \ball{a}{r}$.  Thus $x \notin \dom(f)$.  If instead $g(x) = \infty$, then also $h(x) = \infty$.  In this case, there can be no $\ball{c}{t} \ucode \Psi q$ with $x \in \ball{c}{t}$, which similarly implies that $x \notin \dom(f)$.

Now suppose that $x \in \lscsupp(g)$, and let $\varepsilon \in \Qb_{>0}$.  Let $(u, v) \subseteq \Rb$ be such that $g(x) \in (u, v)$ and $v - u < \varepsilon$.  By the definition of $g(x)$, there are a $\ball{b}{s} \subseteq \spc X$ and a $p \in (u, v)$ such that $x \in \ball{b}{s}$ and $\ball{b}{s} \vcode \Gamma p$.  Likewise, as $h(x) = g(x) \in (u,v)$, there are a $\ball{c}{t} \subseteq \spc X$ and a $q \in (u, v)$ such that $x \in \ball{c}{t}$ and $\ball{c}{t} \ucode \Psi q$.  Notice then that $p \leq g(x) \leq q$.  Let $\ball{a}{r} \subseteq \spc X$ be such that $x \in \ball{a}{r}$, $\ball{a}{r} \ballsub \ball{b}{s}$, and $\ball{a}{r} \ballsub \ball{c}{t}$.  Then $\ball{b}{s}$, $\ball{c}{t}$, $p$, and $q$ witness that $\ball{a}{r} \fcode \Phi (u, v)$.  Thus for every $\varepsilon \in \Qb_{>0}$, there are a $\ball{a}{r} \subseteq \spc X$ with $x \in \ball{a}{r}$ and a $(u,v) \subseteq \Rb$ with $v-u < \varepsilon$ and $g(x) \in (u,v)$.  This means that $x \in \dom(f)$ and $f(x) = g(x) = h(x)$.
\end{proof}

\begin{remark}\label{rem-LSC}
Our definition of the values of lower semi-continuous functions by suprema of rationals is similar to the definition of Borel measures within $\rca$ in~\cite[Section~X.1]{SimpsonSOSOA}.  As with Borel measures, one could understand the values of lower semi-continuous functions in a comparative way instead of requiring that the defining suprema exist.  That is, if $f$ is lower semi-continuous, then it is still possible to make sense of inequalities like $f(x) \leq r$ in $\rca$ even if the supremum defining $f(x)$ does not exist.  Indeed, in $\omega$-models, the values of lower semi-continuous functions are defined as (relative) left-c.e.\ reals.  With this perspective, Proposition~\ref{prop-LSCandGraph} is still true without the existence of infima.  The proof of Proposition~\ref{prop-LSCchar} shows that a code $\Psi$ for a partial continuous function $g\colon \spc X\to \Rb$ can be considered as a pair of codes $(\Psi_{-},\Psi_{+})$ for lower and upper semi-continuous functions by putting $\ball{a}{r} \vcode {\Psi_{-}}  u$ and $\ball{a}{r} \ucode {\Psi_{+}} v$ if $\ball{a}{r} \fcode \Psi (u,v)$, and then $x\in \dom(g)$ if and only if the values at $x$ defined by $\Psi_{-}$ and $\Psi_{+}$ coincide.  If the values coincide in a comparative sense, then it exists as a real number within $\rca$.  Similar modifications may be available for other theorems presented here.
\end{remark}

\section{Honestly-coded potentials}

Recall that we refer to lower semi-continuous functions $f \colon \spc X \to \overline{\Rb}_{\geq 0}$ as {\em potentials}.  According to Definition~\ref{def-lsc}, for $\spc X$ a complete separable metric space, $\Phi$ a code for a lower semi-continuous function $f \colon \spc X \imp \overline \Rb$, $\ball{a}{r} \subseteq \spc X$, and $q \in \Qb$, we know that $f(x) \geq q$ for all $x \in \ball{a}{r}$ if it is the case that $\ball{a}{r} \vcode \Phi q$.  We call a code $\Phi$ for $f$ \emph{honest} if it contains all of the information of this sort.  That is, if the `if' is an `if and only if.'

\begin{Definition}[$\rca$]\label{def-honest}
Let $\spc X$ be a complete separable metric space and let $f \colon \spc X \imp \overline \Rb$ be lower semi-continuous.  A code $\Phi$ for $f$ is called \emph{honest} if, for every $\ball{a}{r} \subseteq \spc X$ and every $q \in \Qb$, $\ball{a}{r} \vcode \Phi q$ if and only if $(\forall x \in \ball{a}{r})(f(x) \geq q)$.  If $f$ has an honest code, then we say that $f$ is \emph{honestly-coded}.
\end{Definition}

Notice that if $f \colon \spc X \imp \overline \Rb$ is lower semi-continuous and $\Phi \subseteq \Nb \times X \times \Qb_{>0} \times \Qb$ is any set such that, for every $\ball{a}{r} \subseteq \spc X$ and every $q \in \Qb$, $\ball{a}{r} \vcode \Phi q$ if and only if $(\forall x \in \ball{a}{r})(f(x) \geq q)$, then $\Phi$ is automatically a code (and hence an honest code) for $f$ as in Definition~\ref{def-lsc}.

Every lower semi-continuous function admits an honest code, although such a code cannot always be constructed in a weak theory.

\begin{Lemma}\label{lem-HonestCodes}
Let $\spc X$ be a complete separable metric space.
\begin{enumerate}[label=(\roman*)]
\item\label{itHonestOne} \textup{(}$\wkl$\textup{)} If $\spc X$ is compact and $f \colon \spc X \imp \overline \Rb$ is lower semi-continuous, then there is a code $\Phi$ for $f$ such that, for every $\ball{a}{r} \subseteq \spc X$ and every $q \in \Qb$, $\ball{a}{r} \vcode \Phi q$ if and only if $(\forall x \in \overline{\ball{a}{r}})(f(x) > q)$.

\item\label{itHonestTwo} \textup{(}$\aca$\textup{)} If $\spc X$ is compact and $f \colon \spc X \imp \overline \Rb$ is lower semi-continuous, then $f$ has an honest code.

\item\label{itHonestThree} \textup{(}$\aca$\textup{)} If $f \colon \spc X \imp \Rb$ is continuous, then there is an honestly-coded lower semi-continuous $g \colon \spc X \imp \overline \Rb$ such that $(\forall x \in \spc X )(g(x) = f(x))$.
 
\item\label{itHonestFour} \textup{(}$\pica$\textup{)} If $f \colon \spc X \imp \overline \Rb$ is lower semi-continuous, then $f$ has an honest code.
\end{enumerate}
\end{Lemma}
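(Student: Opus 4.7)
The plan for each item is to express the honesty condition at the complexity level matched by the ambient theory and then form $\Phi$ via the appropriate comprehension scheme; (LSC1), (LSC2), and the fact that the resulting code defines $f$ itself then fall out of standard semi-continuity bookkeeping.

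For (i), let $\Psi$ code $f$ and write $\{f>q\}=\bigcup\{\ball{b}{s}:\ball{b}{s}\vcode\Psi q'\text{ for some rational }q'>q\}$, so that $(\forall x\in\overline{\ball{a}{r}})(f(x)>q)$ is precisely containment of the compact set $\overline{\ball{a}{r}}$ in this open set. Under $\wkl$, Theorem~\ref{thm-WKLCompact} rewrites this as existence of a finite subcover by $\Psi$-witnessed balls. Crucially, checking that a given finite list $(\ball{b_i}{s_i})$ covers $\overline{\ball{a}{r}}$ is itself $\Sigma^0_1$ already in $\rca$: using the $2^{-n}$-nets supplied by the compactness code for $\spc X$, one has $\overline{\ball{a}{r}}\subseteq\bigcup_i\ball{b_i}{s_i}$ iff $(\exists n)$ every net point landing in $\overline{\ball{a}{r}}$ lies in some $\ball{b_i}{s_i-2^{-n}}$. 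The whole condition is therefore $\Sigma^0_1$, and $\Phi$ is formed by $\Delta^0_1$-comprehension.

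For (ii), work in $\aca\supseteq\wkl$ and let $\Phi'$ be supplied by (i). Set $\ball{a}{r}\vcode\Phi q$ iff $\ball{a}{r}\vcode{\Phi'}q''$ for every rational $q''<q$; this $\Pi^0_1$ prescription is captured by $\Sigma^0_1$-comprehension. Honesty follows from $f(x)\geq q\Leftrightarrow\forall q''<q\,(f(x)>q'')$ combined with the fact that $\{f\geq q\}$ is closed for lower semi-continuous $f$, so $(\forall x\in\ball{a}{r})(f(x)\geq q)$ and $(\forall x\in\overline{\ball{a}{r}})(f(x)\geq q)$ are equivalent. For (iii), continuity of $f$ together with density of $X\cap\ball{a}{r}$ in $\ball{a}{r}$ yields $(\forall x\in\ball{a}{r})(f(x)\geq q)\Leftrightarrow(\forall a'\in X\cap\ball{a}{r})(f(a')\geq q)$; the right-hand side is arithmetical since $f(a')>q''$ is $\Sigma^0_1$ at rational $a'$ via the code of $f$, so $\Phi$ can be formed in $\aca$.

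For (iv), without compactness the condition $(\forall x\in\ball{a}{r})(f(x)\geq q)$ genuinely quantifies over the second-order objects $x\in\spc X$ with arithmetical body (in the code of $f$), making it $\Pi^1_1$; this is precisely the scope of $\Pi^1_1$-comprehension, the defining principle of $\pica$, so $\Phi$ is formed directly. In each of (ii)--(iv), that the constructed $\Phi$ actually codes $f$ is verified by the usual sandwich: any $q$-witnessing ball forces $f(x)\geq q$ by taking $y=x$, while lower semi-continuity supplies, for every $\varepsilon>0$, a small rational ball around $x$ witnessing $f\geq f(x)-\varepsilon$. The main obstacle is (i): the visibly $\Pi^0_1$ containment $\overline{\ball{a}{r}}\subseteq\{f>q\}$ only becomes $\Sigma^0_1$ after combining compactness, the Heine--Borel form of $\wkl$, and the $2^{-n}$-net device for effectively verifying finite covers; once this is secured, (ii)--(iv) are bookkeeping around the stronger comprehension principles and standard LSC/continuity manipulations.
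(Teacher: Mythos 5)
Your items (i), (iii), and (iv) follow the paper's proof in all essentials: (i) enumerates finite subcovers by $\Psi$-witnessed balls (the paper cites a uniform version of Simpson's Theorem~IV.1.7 for the $\Sigma^0_1$-ness of ``finitely many rational balls cover $\overline{\ball{a}{r}}$,'' which is what your net-point device reconstructs, though as stated your equivalence should test net points \emph{close to} $\overline{\ball{a}{r}}$ rather than inside it, since a point of the closed ball may only be approximated by net points lying outside it); (iii) tests only the dense set, as in the paper; (iv) is a direct application of $\Pi^1_1$ comprehension, as in the paper.

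Item (ii), however, has a genuine gap. Your code sets $\ball{a}{r} \vcode \Phi q$ iff $\ball{a}{r} \vcode{\Phi'} q''$ for all rational $q''<q$, which by (i) amounts to $(\forall x \in \overline{\ball{a}{r}})(f(x) \geq q)$; you then identify this with the honesty condition $(\forall x \in \ball{a}{r})(f(x) \geq q)$ on the grounds that $\{f \geq q\}$ is closed for lower semi-continuous $f$. That is false: lower semi-continuity makes $\{f > q\}$ open and $\{f \leq q\}$ closed, but $\{f \geq q\} = \bigcap_n \{f > q - 2^{-n}\}$ is in general only a $G_\delta$. Concretely, on $\spc X = [0,1]$ let $f$ be $1$ on $(0,1)$ and $0$ at the endpoints; this is lower semi-continuous, and $f \geq 1$ holds on the open ball $\ball{1/2}{1/2}=(0,1)$ but fails on its closure, so your $\Phi$ omits $\ball{1/2}{1/2} \vcode \Phi 1$ and is not honest (it is still a sound code for $f$, but the open-ball biconditional is exactly what honesty means and what is used later, e.g.\ in Lemma~\ref{lem-EnvelopeHelper}). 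The paper repairs precisely this boundary problem by quantifying over strictly interior sub-balls as well: $\ball{a}{r} \vcode \Gamma q$ iff $\ball{b}{s} \vcode{\Phi'} t$ for every $t<q$ and every $\ball{b}{s} \ballsub \ball{a}{r}$. Every point of the open ball lies in some $\ball{b}{s} \ballsub \ball{a}{r}$, and conversely $\ball{b}{s} \ballsub \ball{a}{r}$ forces $\overline{\ball{b}{s}} \subseteq \ball{a}{r}$, so this converts the closed-ball information from (i) into the required open-ball condition. Your argument does work verbatim in (iii), because for continuous $f$ the set $\{f \geq q\}$ genuinely is closed --- which is exactly why the dense-set test suffices there.
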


\begin{proof}
\ref{itHonestOne} Work in $\wkl$.  For $\la a, r \ra \in X \times \Qb$ and a sequence $( \la b_i, s_i \ra : i < n )$ of elements of $X \times \Qb$, the assertion that $\overline{\ball{a}{r}} \subseteq \bigcup_{i < n}\ball{b_i}{s_i}$ is $\Sigma^0_1$ uniformly in $\la a, r \ra$ and $( \la b_i, s_i \ra : i < n )$, essentially by a uniform version of~\cite[Theorem~IV.1.7]{SimpsonSOSOA}.

Let $\Psi$ be a code for $f$.  Define a new code $\Phi$ by enumerating $\ball{a}{r} \vcode \Phi q$ whenever there are balls $(\ball{b_i}{s_i} : i < n)$ and rationals $\la t_i : i < n \ra$ such that $t_i > q$ and $\ball{b_i}{s_i}\vcode \Psi t_i$ for each $i < n$ and $\overline{\ball{a}{r}} \subseteq \bigcup_{i < n}\ball{b_i}{s_i}$.

Suppose that $\ball{a}{r} \vcode\Phi q$.  Then $\overline{\ball{a}{r}}$ is covered by balls $(\ball{b_i}{s_i} : i < n)$ where $f(x) > q$ for all $x \in \ball{b_i}{s_i}$ and all $i < n$.  Thus $f(x) > q$ for all $x \in \overline{\ball{a}{r}}$.  Conversely, suppose that $f(x) > q$ for all $x \in \overline{\ball{a}{r}}$.  Then any enumeration $(\ball{b_i}{s_i} : i \in \Nb)$ of every ball $\ball{b_i}{s_i}$ for which there is a $t_i \in \Qb$ with $t_i > q$ and $\ball{b_i}{s_i} \vcode \Psi t_i$ is an open cover of $\overline{\ball{a}{r}}$ and hence (essentially by \cite[Theorem~IV.1.6]{SimpsonSOSOA}) has a finite subcover.  Thus there are a sequence $(\ball{b_i}{s_i} : i < n)$ of balls and a sequence $\la t_i : i < n \ra$ of rationals witnessing that $\ball{a}{r} \vcode \Phi q$.

We have shown that, for every $\ball{a}{r} \subseteq \spc X$ and every $q \in \Qb$, $\ball{a}{r} \vcode \Phi q$ if and only if $(\forall x \in \overline{\ball{a}{r}})(f(x) > q)$.  Using this fact, it is straightforward to verify that $\Phi$ is also a code for $f$ in the sense of Definition~\ref{def-lsc}.\\

\noindent \ref{itHonestTwo} Work in $\aca$.  Let $\Phi$ be a code for $f$ as in (i).  Then, define a code $\Gamma$ by setting $\ball{a}{r} \vcode \Gamma q$ whenever it is the case that $\ball{b}{s} \vcode \Phi t$ for every $t < q$ and every $\ball{b}{s} \subseteq \spc X$ with $\ball{b}{s} \ballsub \ball{a}{r}$.

Suppose that $x \in \ball{a}{r}$ and $\ball{a}{r} \vcode \Gamma q$.  Let $\ball{b}{s}$ be such that $x \in \ball{b}{s}$ and $\ball{b}{s} \ballsub \ball{a}{r}$.  Then $\ball{b}{s} \vcode \Phi t$ for every $t < q$, which means that $f(x) > t$ for every $t < q$.  Hence $f(x) \geq q$.  Conversely, suppose that $f(x) \geq q$ for all $x \in \ball{a}{r}$.  If $\ball{b}{s} \subseteq \spc X$ and $t \in \Qb$ satisfy $t < q$ and $\ball{b}{s} \ballsub \ball{a}{r}$, then $f(x) \geq q > t$ for all $x \in \overline{\ball{b}{s}}$ because $\overline{\ball{b}{s}} \subseteq \ball{a}{r}$.  Thus $\ball{b}{s} \vcode \Phi t$.  Hence $\ball{b}{s} \vcode \Phi t$ whenever $t < q$ and $\ball{b}{s} \ballsub \ball{a}{r}$, so $\ball{a}{r} \vcode \Gamma q$.

We have shown that $\ball{a}{r} \vcode \Gamma q$ if and only if $(\forall x \in \ball{a}{r})(f(x) \geq q)$.  So $\Gamma$ is an honest code for $f$.\\

\noindent \ref{itHonestThree}  In $\aca$, we can define a code $\Phi$ by $\ball{a}{r} \vcode \Phi q$ if and only if $f(b) \geq q$ for all $b \in \ball{a}{r} \cap X$.  One readily checks that $\Phi$ is an honest code for a lower semi-continuous $g$ that is equal to $f$.\\

\noindent \ref{itHonestFour}  In $\pica$, we can directly define an honest code $\Phi$ for $f$ by setting $\ball{a}{r} \vcode \Phi q$ if and only if $(\forall x \in \ball{a}{r})(f(x) \geq q)$.
\end{proof}

\section{Continuous envelopes}

Honestly-coded lower semi-continuous functions facilitate the calculation of infima, which helps us approximate lower semi-continuous functions by continuous ones.
To be precise, lower semi-continuous functions bounded below can be written as the increasing limit of continuous functions using the following construction.

\begin{Definition}
Given a potential $\fnc$ on a complete separable metric space $\spc X$ and an $\alpha \in \Rb_{>0}$, define the {\em lower $\alpha$-envelope of $\fnc$} by
\[\fnc_\alpha(x)=\inf_{y\in \spc X}(\fnc(y)+\alpha d(x,y)).\]
\end{Definition}

Although not needed for our purposes, it is instructive to observe that if $\fnc \colon \spc X \to \overline{\Rb}_{\geq 0}$ is a potential, then $ \fnc_n$ converges pointwise to $\fnc$ as $n \to \infty$.
What we do need to prove is that continuous envelopes exist, and this can typically not be done within a weak theory.  The construction of envelopes hinges on the following more general lemma.

\begin{Lemma}[$\aca$]\label{lem-EnvelopeHelper}
Let $\spc X$ and $\spc Y$ be complete separable metric spaces, let $h \colon \spc X \times \spc Y \imp \Rb_{\geq 0}$ be uniformly continuous, and let $\fnc \colon \spc Y \imp \overline{\Rb}_{\geq 0}$ be lower semi-continuous and honestly-coded with non-empty support.  Then there is a uniformly continuous function $g \colon \spc X \imp \Rb_{\geq 0}$ such that
\[(\forall x \in \spc X)  [g(x) = \inf_{y \in \spc Y}(h(x,y) + \fnc(y))].\]
\end{Lemma}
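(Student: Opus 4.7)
My plan is to produce a code for $g$ by invoking the uniform version of Lemma~\ref{lem-code-for-conti1}. This requires (a) an explicit modulus of uniform continuity for $g$, and (b) a uniform arithmetic procedure that computes $g(a)$ as a real number for every $a \in X$. The approach leans on the fact that, since $\Psi$ is an honest code for $f$, for every rational ball $\ball{b}{s} \subseteq \spc Y$ the quantity $\inf_{\ball{b}{s}} f = \sup\{q \in \Qb : \ball{b}{s} \vcode \Psi q\}$ is arithmetically accessible (it is $+\infty$ whenever $\ball{b}{s}$ misses $\lscsupp(f)$, and a genuine real otherwise).

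For (a), a standard near-optimal-witness argument shows that any modulus $\omega_h$ of uniform continuity for $h$ is also a modulus for $g$: given $a, a' \in \spc X$ with $d(a, a') < 2^{-\omega_h(n)}$ and any $\varepsilon > 0$, choose $y^*$ with $h(a', y^*) + f(y^*) < g(a') + \varepsilon$; then $g(a) \leq h(a, y^*) + f(y^*) \leq h(a', y^*) + f(y^*) + 2^{-n} < g(a') + \varepsilon + 2^{-n}$, and letting $\varepsilon \to 0$ and swapping $a, a'$ yields $|g(a) - g(a')| \leq 2^{-n}$. Using the non-empty support assumption, pick $y_0 \in \lscsupp(f)$; then $v_0 := f(y_0)$ exists as a real in $\aca$ (as a bounded $\Sigma^0_1$ supremum), giving the a priori bound $g(a) \leq M(a) := h(a, y_0) + v_0$.

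The main obstacle is (b). The key observation is the following sandwich. For each $n$ and rational ball $\ball{b}{s}$ with $s < 2^{-\omega_h(n)}$, set
\[
L_n(a, b, s) := h(a, b) - 2^{-n} + \inf_{\ball{b}{s}} f.
\]
The uniform continuity of $h$ gives $|h(a, y) - h(a, b)| \leq 2^{-n}$ for $y \in \ball{b}{s}$, from which
\[
L_n(a, b, s) \leq \inf_{y \in \ball{b}{s}}\bigl(h(a, y) + f(y)\bigr) \leq L_n(a, b, s) + 2 \cdot 2^{-n}
\]
follows directly. Since every $y \in \spc Y$ lies in arbitrarily small rational balls, taking the infimum over $(b, s)$ yields
\[
g(a) - 2 \cdot 2^{-n} \leq g_n(a) := \inf\{L_n(a, b, s) : b \in Y,\, s \in \Qb_{>0},\, s < 2^{-\omega_h(n)}\} \leq g(a).
\]
Each $g_n(a)$ is a bounded infimum of an arithmetic sequence (truncating $L_n$ at $M(a) + 1$ to discard irrelevant witnesses), hence a real in $\aca$, and $g(a) = \lim_n g_n(a)$ with explicit rate $2 \cdot 2^{-n}$.

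All of this is uniform in $a \in X$, so the sequence $(g(a_i))_{i \in \Nb}$ of reals exists in $\aca$, and the uniform version of Lemma~\ref{lem-code-for-conti1} (with modulus $\omega_h$) produces the desired code for $g \colon \spc X \to \Rb_{\geq 0}$; non-negativity of $g$ is automatic from $h, f \geq 0$.
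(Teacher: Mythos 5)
Your proposal is correct, and it reaches the paper's conclusion by a mildly different route. The paper arithmetizes the infimum in a single step: it shows that, for $x\in\spc X$ and $q\in\Qb$, the statement $\exists y\,(h(x,y)+\fnc(y)<q)$ is equivalent to a $\Sigma^0_1$ condition on the codes --- the essential use of honesty being that $\neg(\ball{b}{r}\vcode\Psi p)$ holds exactly when some $y\in\ball{b}{r}$ has $\fnc(y)<p$ --- and then defines $g(x)$ as the infimum of the resulting set of rational upper bounds, building the code for $g$ by hand. You instead use the positive face of honesty, reading off $\inf_{\ball{b}{s}}\fnc$ as $\sup\{q\in\Qb:\ball{b}{s}\vcode\Psi q\}$, discretize the infimum over balls of radius below $2^{-\omega_h(n)}$, and recover $g(a)$ as the limit of the stage-$n$ approximants with explicit rate, delegating the code construction to Lemma~\ref{lem-code-for-conti1}. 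Both arguments rest on the same two pillars (honesty gives arithmetical access to infima of $\fnc$ over balls; the modulus of $h$ transfers to $g$ by the near-optimal-witness computation, which is verbatim the paper's second Claim), so the difference is one of packaging; your version is more explicitly quantitative, which is a modest gain. Two points to tighten so the argument does not presuppose what it is proving: first, the sandwich should be phrased comparatively --- $L_n(a,b,s)$ is a lower bound for $h(a,\cdot)+\fnc$ on $\ball{b}{s}$, while $L_n(a,b,s)+2\cdot 2^{-n}+\eta$ fails to be one for every $\eta>0$ --- since this is what yields the Cauchyness of $(g_n(a))_n$ and hence the \emph{existence} of $g(a)$, whereas writing $g(a)-2\cdot2^{-n}\leq g_n(a)\leq g(a)$ assumes $g(a)$ already exists; second, forming the reals $\inf_{\ball{b}{s}}\fnc$ uniformly requires first using arithmetical comprehension to isolate the set of balls on which $\{q:\ball{b}{s}\vcode\Psi q\}$ is bounded above (a $\Sigma^0_2$ condition) before applying the uniform supremum principle --- routine in $\aca$, but worth making explicit alongside your truncation at $M(a)+1$.
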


\begin{proof}
Let $\Phi$ be a code for $h$, and let $\Psi$ be an honest code for $\fnc$.  We first show that if $x \in \spc X$, then $\inf_{y \in \spc Y}(h(x,y) + \fnc(y))$ indeed exists, which is a consequence of the following Claim.

\begin{Claim}
Let $x \in \spc X$ and $q \in \Qb$.  Then there is a $y \in \spc Y$ such that $h(x,y) + \fnc(y) < q$ if and only if there are $\ball{\langle a,b\rangle}{r}\subseteq \spc X\times \spc Y$, $(u,v) \subseteq \Rb$, and $p \in \Qb$ such that $x \in \ball{a}{r}$, $\ball{\langle a,b\rangle}{r} \fcode \Phi (u,v)$, $\neg (\ball{b}{r} \vcode \Psi p)$, and $v + p < q$.
\end{Claim}

\begin{proof}[Proof of Claim]
Be aware that $\ball{\langle a,b\rangle}{r}=\ball{a}{r}\times\ball{b}{r}$ by Definition~\ref{def-ProdSpace}.
Suppose that $y \in \spc Y$ is such that $h(x,y)  + \fnc(y) < q$.  Let $u, v, p \in \Qb$ be such that $h(x,y) \in (u,v)$, $p > f(y)$, and $v + p < q$.  Then there must be a $\ball{\langle a,b\rangle}{r} \subseteq \spc X\times\spc Y$ such that $\langle x,y\rangle \in \ball{\langle a,b\rangle}{r}$ and $\ball{\langle a,b\rangle}{r} \fcode \Phi (u,v)$.  Furthermore, since $\Psi$ is a code for $f$, it must be that $\neg(\ball{b}{r} \vcode \Psi p)$ because $y \in \ball{b}{r}$ but $\fnc(y) < p$.  Thus $\ball{\langle a,b\rangle}{r}$, $(u,v)$, and $p$ are as required.

Conversely, suppose that there are such $\ball{\langle a,b\rangle}{r}$, $(u,v)$, and $p$.  Because $\Psi$ is honest and $\neg(\ball{b}{r} \vcode \Psi p)$, there must be a $y \in \ball{b}{r}$ such that $\fnc(y) < p$.  Also, $h(x,y) \leq v$ because $\la x, y \ra \in \ball{a}{r} \times \ball{b}{r}=\ball{\langle a,b\rangle}{r}$ and $\ball{\langle a,b\rangle}{r} \fcode \Phi (u,v)$.  Hence $h(x,y) + \fnc(y) < v + p < q$.
\end{proof}

Thus, given $x \in \spc X$, let $Q$ be the set of all $q \in \Qb$ for which there are $\la a,b,r \ra \in X \times Y\times \Qb$ and $u,v,p \in \Qb$ such that $\ball{\langle a,b\rangle}{r}$, $(u,v)$, and $p$ witness that there is a $y \in \spc Y$ such that $h(x,y) + \fnc(y) < q$ as in the Claim. The set $Q$ is bounded below by $0$ and is non-empty since $\lscsupp(f) \not = \varnothing$, so $\inf Q$ exists (essentially by \cite[Theorem~III.2.2]{SimpsonSOSOA}), and, by the Claim, $\inf Q = \inf_{y \in \spc Y}(h(x,y) + \fnc(y))$.  Henceforth, for each $x \in \spc X$, let $\alpha_x$ denote $\inf_{y \in \spc Y}(h(x,y) + \fnc(y))$.

We can make the above argument uniformly for all $a \in X$ by letting $A \subseteq X \times \Qb$ be the set of all $\la a, q \ra$ for which there are $r \in \Qb$, $b \in Y$, and $u,v,p \in \Qb$ such that $\ball{\langle a,b\rangle}{r}$, $(u,v)$, and $p$ witness that there is a $y \in \spc Y$ such that $h(a,y) + \fnc(y) < q$.  Then from $A$ we can define the sequence $(\alpha_a : a \in X)$ because, given a sequence of sets of rationals all bounded from below, $\aca$ suffices to produce the corresponding sequence of infima (that is $\aca$ proves item~4 of \cite[Theorem~III.2.2]{SimpsonSOSOA} uniformly).

Define a code $\Gamma$ for a continuous partial function $g \colon \spc X \imp \Rb$ by defining $\ball{a}{r} \fcode \Gamma (u,v)$ if and only if $(\forall c \in \ball{a}{r} \cap X)[\alpha_c \in (u,v)]$.  It is easy to see that $\Gamma$ satisfies the requirements of Definition~\ref{def-continuous}.  We need to show that $g$ is indeed uniformly continuous on all of $\spc X$ and that $(\forall x \in \spc X)(g(x) = \alpha_x)$.

\begin{Claim}
Let $\varepsilon, \delta \in \Qb_{>0}$ be such that
\begin{align*}
(\forall \la x_0, y_0 \ra, \la x_1, y_1 \ra \in \spc{X} \times \spc{Y})(d_{\spc{X} \times \spc{Y}}(\la x_0, y_0 \ra, \la x_1, y_1 \ra) < \delta \imp |h(x_0,y_0) - h(x_1,y_1)| \leq \varepsilon).
\end{align*}
Let $x_0, x_1 \in \spc X$.  Then $d_{\spc X}(x_0, x_1) < \delta \imp |\alpha_{x_0} - \alpha_{x_1}| \leq \varepsilon$.
\end{Claim}

\begin{proof}[Proof of Claim]
We show that $(\forall \eta \in \Qb_{>0})(\alpha_{x_1} \leq \alpha_{x_0} + \varepsilon + \eta)$, which implies that $\alpha_{x_1} \leq \alpha_{x_0} + \varepsilon$.  By a symmetric argument, we also have that $\alpha_{x_0} \leq \alpha_{x_1} + \varepsilon$, which gives the desired $|\alpha_{x_0} - \alpha_{x_1}| \leq \varepsilon$.

Thus let $\eta \in \Qb_{>0}$.  Let $y \in \spc Y$ be such that $h(x_0, y) + \fnc(y) \leq \alpha_{x_0} + \eta$.  Then
\begin{align*}
\alpha_{x_1} \leq h(x_1, y) + \fnc(y) = (h(x_1, y) - h(x_0, y)) + (h(x_0, y) + \fnc(y)) \leq \varepsilon + \alpha_{x_0} + \eta,
\end{align*}
where the last inequality is by the choice of $y$ and the fact that $d_{\spc{X} \times \spc{Y}}(\la x_0, y \ra, \la x_1, y \ra) = d_{\spc X}(x_0, x_1) < \delta$.
\end{proof}

Given $x \in \spc X$ and $\varepsilon \in \Qb_{>0}$, let $\delta \in \Qb_{>0}$ be such that
\begin{align*}
(\forall \la x_0, y_0 \ra, \la x_1, y_1 \ra \in \spc{X} \times \spc{Y})(d_{\spc{X} \times \spc{Y}}(\la x_0, y_0 \ra, \la x_1, y_1 \ra) < \delta \imp |h(x_0,y_0) - h(x_1,y_1)| \leq \nicefrac \varepsilon 4),
\end{align*}
and let $a \in X$ be such that $x \in \ball{a}{\delta}$.  Let $(u,v) \subseteq \Rb$ be such that $[\alpha_a - \nicefrac \varepsilon 4, \alpha_a + \nicefrac \varepsilon 4] \subseteq (u,v)$ and $v-u < \varepsilon$.  The Claim tells us that if $c \in \ball{a}{\delta} \cap X$, then $\alpha_c \in [\alpha_a - \nicefrac \varepsilon 4, \alpha_a + \nicefrac \varepsilon 4] \subseteq (u,v)$, which means that $\ball{a}{\delta} \fcode \Gamma (u,v)$.  Likewise, the Claim implies that $\alpha_x \in (u,v)$.  Thus for every $x \in \spc X$ and every $\varepsilon \in \Qb_{>0}$, there are a ball $\ball{a}{\delta} \subseteq \spc X$ and a $(u,v) \subseteq \Rb$ such that $x \in \ball{a}{\delta}$, $\alpha_x \in (u,v)$, $v-u < \varepsilon$, and $\ball{a}{\delta} \fcode \Gamma (u,v)$.  Thus $g(x)$ is defined and equals $\alpha_x$ for all $x \in \spc X$.  Furthermore, the Claim clearly implies that $g$ is uniformly continuous.
\end{proof}

\begin{Lemma}[$\aca$]\label{lem-EnvelopeExists}
Let $\spc X$ be a complete separable metric space, let $\fnc \colon \spc X \imp \overline{\Rb}_{\geq 0}$ be lower semi-continuous and honestly coded with non-empty support, and let $\alpha \in \Rb_{\geq 0}$.  Then there is a uniformly continuous function $\fnc_\alpha \colon \spc X \imp \Rb_{\geq 0}$ such that, for all $x\in \spc X$,
\begin{align*}
\fnc_\alpha(x) = \inf_{y \in \spc X}(\fnc(y) + \alpha d(x,y)).
\end{align*}
\end{Lemma}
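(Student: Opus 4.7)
The plan is to obtain this lemma as an almost immediate corollary of Lemma~\ref{lem-EnvelopeHelper} by setting $\spc Y = \spc X$ and choosing the uniformly continuous function $h \colon \spc X \times \spc X \imp \Rb_{\geq 0}$ to be $h(x,y) = \alpha \cdot d(x,y)$. Once $h$ is in hand with a proper code, the envelope $\fnc_\alpha$ is exactly the function $g$ produced by Lemma~\ref{lem-EnvelopeHelper}, and the displayed identity is exactly the conclusion of that lemma.

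First I would construct a code within $\rca$ for the continuous function $h \colon \spc X \times \spc X \imp \Rb_{\geq 0}$ defined by $h(x,y) = \alpha\cdot d(x,y)$. The metric $d$ is $1$-Lipschitz on $\spc X \times \spc X$ with respect to the max metric of Definition~\ref{def-ProdSpace}, since $|d(x_0,y_0) - d(x_1,y_1)| \leq d(x_0,x_1) + d(y_0,y_1) \leq 2 \cdot d_{\spc X \times \spc X}(\la x_0,y_0\ra, \la x_1, y_1\ra)$. Multiplying by $\alpha$ yields a function that is uniformly continuous with an explicit modulus depending only on $\alpha$ (fix a rational $\alpha' \geq \alpha$ for the concrete bookkeeping), and on the dense set $X \times X$ the value $\alpha\cdot d(a,b)$ can be computed as a real uniformly in $\la a,b\ra$. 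Hence Lemma~\ref{lem-code-for-conti1}, applied with $\MC U = \spc X \times \spc X$, provides a code for $h$ as a total continuous function, which is moreover uniformly continuous.

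Next I would invoke Lemma~\ref{lem-EnvelopeHelper} in $\aca$ with this $h$ and with $\spc Y = \spc X$ and the given honestly-coded potential $\fnc$ of non-empty support. The lemma yields a uniformly continuous $g \colon \spc X \imp \Rb_{\geq 0}$ such that for every $x \in \spc X$,
\[
g(x) = \inf_{y \in \spc X}\bl h(x,y) + \fnc(y)\br = \inf_{y \in \spc X}\bl \fnc(y) + \alpha \cdot d(x,y)\br,
\]
which is precisely the required $\fnc_\alpha(x)$. Setting $\fnc_\alpha := g$ completes the construction.

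There is essentially no hard step here, since all the real work has been done in Lemma~\ref{lem-EnvelopeHelper}. The only minor issue is verifying that the distance function admits a code as a uniformly continuous function in $\rca$, but this is routine via Lemma~\ref{lem-code-for-conti1} using the triangle inequality as the modulus. Nonnegativity of $\fnc_\alpha$ is automatic since $\fnc \geq 0$ and $\alpha \cdot d(x,y) \geq 0$, so the infimum lies in $\Rb_{\geq 0}$, and non-emptiness of $\lscsupp(\fnc)$ ensures that this infimum is finite as guaranteed by Lemma~\ref{lem-EnvelopeHelper}.
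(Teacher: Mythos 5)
Your proposal is correct and matches the paper's proof exactly: the paper likewise obtains the lemma as an immediate corollary of Lemma~\ref{lem-EnvelopeHelper} applied with $\spc Y = \spc X$ and $h(x,y) = \alpha d_{\spc X}(x,y)$, which is uniformly continuous and bounded below by $0$. Your extra detail on coding $h$ via Lemma~\ref{lem-code-for-conti1} is a harmless elaboration of what the paper leaves implicit.
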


\begin{proof}
This follows from Lemma~\ref{lem-EnvelopeHelper} because the function $h \colon \spc X \times \spc X \imp \Rb_{\geq 0}$ given by $h(x,y) = \alpha d_{\spc X}(x,y)$ is uniformly continuous and bounded below by $0$.
\end{proof}

\section{Critical points of continuous functions}\label{SecFormalEVP}

In this section we formalize Ekeland's variational principle and show that it has natural restrictions provable in weak systems.  Recall that a \emph{potential} on a complete separable metric space $\spc X$ is a lower semi-continuous function $\fnc \colon \spc X \imp \overline{\Rb}_{\geq 0}$. Recall also that \emph{$\varepsilon$-critical points} were defined in Definition \ref{DefCritical} and that this definition may readily be formalized in $\rca$ using coded lower semi-continuous functions.
With this, we are ready to define our formalized version (or, version{\em s}) of Ekeland's variational principle.

\begin{Definition}\label{DefFormalCritical}
Given definable classes $\mathfrak X$ of coded complete separable metric spaces and $\mathfrak F$ of coded potentials, the {\em (formalized) free variational principle ($\fvp$) for $\spc X\in\mathfrak X$ and $\fnc\in\mathfrak F$} is the statement that, if $\spc X\in\mathfrak X$ and $\fnc \colon \spc X \to \overline{\Rb}_{\geq 0}$ in $\mathfrak F$ is such that $ \lscsupp (f) \not = \varnothing$, then for every $\varepsilon > 0$ there is an $x_\ast \in \lscsupp (f)$ such that
\[ \forall x\in \lscsupp (f) \Big ( \big ( \varepsilon d(x_\ast,x) \leq  \fnc (x_\ast) - \fnc(x) \big ) \rightarrow x = x_\ast \Big) .\]
The {\em localized variational principle} ($\lvp$) is defined similarly, except that $x_0 \in \lscsupp (f)$ is given and $x_\ast$ is chosen so that $ \varepsilon d(x_0, x_\ast)  \leq f(x_0) - f(x_\ast)$.

When not mentioned, we assume that $\mathfrak X$ is the class of all coded complete separable metric spaces and $\mathfrak F$ is the class of all coded potentials.  Given $c>0$, to indicate that $f(x) \leq c$ for all $x \in \lscdom (f)$, we will say that $f$ is {\em $c$-bounded,} and if we want to fix the value of $\varepsilon$ to $c$, we call the statement the {\em free/localized variational principle for $\varepsilon = c$}.  The free/localized variational principles for $\varepsilon = c$ and $f$ $c$-bounded will be denoted the $c$-$\fvp$ and the $c$-$\lvp$, respectively.
\end{Definition}

Critical points are sometimes also called {\em pseudo-minima,} as they may serve as approximate minima of functions that do not actually attain their minimum value.

\begin{Lemma}[$\rca$]\label{lemmCritApproxMin}
If $\spc X$ is any complete separable metric space and $f\colon \spc X \to \overline{\Rb}_{\geq 0}$ is lower semi-continuous with non-empty support, then for any $x_\ast \in \spc X$, $f$ attains its minimum at $x_\ast $ if and only if $x_\ast$ is an $\varepsilon$-critical point of $f$ for all $\varepsilon > 0$.
\end{Lemma}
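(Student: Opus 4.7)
The plan is to prove both directions by unpacking the definition of $\varepsilon$-critical point, with a little care about the values $f(x_\ast)$ and $f(y)$ potentially being $+\infty$. Both halves should be routine arithmetic of inequalities, and the whole lemma should go through in $\rca$.

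For the forward direction, I would assume $f$ attains its minimum at $x_\ast$. Since $\lscsupp(f) \neq \varnothing$, the minimum value is finite, so $x_\ast \in \lscsupp(f)$ and $f(x_\ast) \leq f(y)$ for every $y \in \spc X$ (this remains valid when $f(y) = +\infty$). Fix an arbitrary $\varepsilon > 0$ and suppose $y \in \spc X$ satisfies $\varepsilon d(x_\ast,y) \leq f(x_\ast) - f(y)$. The right-hand side is $\leq 0$ by minimality, while the left-hand side is $\geq 0$, forcing $d(x_\ast,y)=0$ and hence $y =_{\spc X} x_\ast$. So $x_\ast$ is $\varepsilon$-critical for every $\varepsilon > 0$.

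For the converse, I would argue by contrapositive: assume $f$ does not attain its minimum at $x_\ast$ and exhibit some $\varepsilon > 0$ for which $x_\ast$ fails to be $\varepsilon$-critical. Because $\lscsupp(f) \neq \varnothing$, there is a $y \in \lscsupp(f)$ with $f(y) < f(x_\ast)$; in particular $f(x_\ast) \in \Rb$ as well (else $x_\ast$ would trivially be a minimum when no $y$ had smaller value, contradicting the choice of $y$). Since lower semi-continuous functions in the sense of Definition~\ref{def-lsc} respect $=_{\spc X}$ (the value $f(x)$ depends on $x$ only through the predicate $d(x,a) < r$), the strict inequality $f(y) < f(x_\ast)$ yields $d(x_\ast,y) > 0$. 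I would then set
\[
\varepsilon \;=\; \frac{f(x_\ast) - f(y)}{d(x_\ast,y)} \;>\; 0,
\]
so that $\varepsilon d(x_\ast,y) = f(x_\ast) - f(y)$. Thus $y$ satisfies the hypothesis of the $\varepsilon$-critical condition but $y \neq x_\ast$, so $x_\ast$ is not $\varepsilon$-critical for this particular $\varepsilon$.

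There is essentially no hard step here; the only things to verify are that $\varepsilon$ is a bona fide positive real (guaranteed since $f(x_\ast), f(y) \in \Rb$ and $d(x_\ast,y) > 0$, both of which are available in $\rca$), and that the argument correctly handles the extended-real values. Since $f \geq 0$, the only problematic case is $f = +\infty$, which is disposed of by the assumption $\lscsupp(f) \neq \varnothing$ as above.
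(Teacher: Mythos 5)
Your proof is correct and follows essentially the same route as the paper's own (which is just a two-line version of the same inequality manipulations, phrased contrapositively in both directions). The one blemish is your parenthetical claim that $f(x_\ast)\in\Rb$: if $f(x_\ast)=+\infty$ then $x_\ast$ is certainly \emph{not} a minimum (the support being non-empty), yet your formula for $\varepsilon$ is undefined in that case; the fix is immediate --- take $\varepsilon=1$, since $\varepsilon\, d(x_\ast,y)\le +\infty = f(x_\ast)-f(y)$ for any $y$ in the support with $y\neq x_\ast$.
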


\begin{proof}
It is straightforward to see that if $x \not = x_\ast$ and $ \varepsilon d(x,x_\ast) \leq f(x_\ast) - f(x) $ then $f(x) < f(x_\ast)$, hence if $x_\ast$ is not $\varepsilon$-critical it cannot be a minimum. Conversely, if $f$ does not attain its minimum at $x_\ast$, then we may choose $x \in \spc X$ with $f(x) < f(x_\ast)$. By choosing $\varepsilon$ small enough, we obtain $ \varepsilon d(x,x_\ast)  \leq f(x_\ast) - f(x)$.
\end{proof}

The class $\mathfrak F$ will contain either continuous or lower semi-continuous functions.
Important special cases for us are the $1$-$\fvp$ and the $1$-$\lvp$, which on occasion are equivalent to the $\fvp$ and $\lvp$, respectively.

\begin{Lemma}[$\rca$]\label{lemEpsRest}
Let $\mathfrak X$ be a class of complete separable metric spaces and $\mathfrak F$ be either the class of continuous or of lower semi-continuous potentials.
\begin{enumerate}

\item \label{itEpsUnbounded} The $\fvp$ and the $\lvp$ with $\spc X \in \mathfrak X$ and  $\fnc \in \mathfrak F $ are equivalent, respectively, to the $\fvp$ and the $\lvp$ with $\varepsilon = 1$, $\spc X \in \mathfrak X$, and  $\fnc \in \mathfrak F $.

\item \label{itEpsBigClass} If $\mathfrak X$ is either the class of all complete separable metric spaces or of all compact metric spaces then the $\fvp$ and the $\lvp$ with $\spc X \in \mathfrak X$ and  $\fnc \in \mathfrak F $ bounded are equivalent, respectively, to the $1$-$\fvp$ and the $1$-$\lvp$ with $\spc X \in \mathfrak X$ and $\fnc \in \mathfrak F $.

\item \label{itEpsSingletons} If $\spc X$ is
\begin{enumerate*}

\item\label{itBanachSpace}  a Banach space,

\item\label{itBaireSpace} the Baire space,

\item\label{itCantorSpace} the Cantor space, or

\item \label{itBallRn} a closed ball in $\Rb^n$ (with any norm),

\end{enumerate*}
then the $\fvp$ and the $\lvp$ for $\spc X$ and bounded, lower semi-continuous potentials are equivalent, respectively, to the $1$-$\fvp$ and the $1$-$\lvp$ for $\spc X$ and lower semi-continuous potentials.
\end{enumerate}
\end{Lemma}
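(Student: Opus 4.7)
The plan is to handle each item by a rescaling argument, noting that in every item the forward direction is immediate: $\varepsilon = 1$ and $1$-boundedness are instances of the general $\varepsilon > 0$ and generic boundedness hypotheses. The content lies in the converse directions.

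For Part~(1), given $\varepsilon > 0$ and $f \in \mathfrak F$, I would set $g = f/\varepsilon$. Since scaling by a positive real preserves both lower semi-continuity and continuity and keeps values non-negative, $g$ remains in $\mathfrak F$. A direct calculation shows that $x_*$ is $\varepsilon$-critical for $f$ if and only if it is $1$-critical for $g$; the localization inequality $d(x_0, x_*) \leq g(x_0) - g(x_*)$ rearranges to $\varepsilon\, d(x_0, x_*) \leq f(x_0) - f(x_*)$.

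For Part~(2), I would combine Part~(1) with metric rescaling. Reduce to $\varepsilon = 1$ via Part~(1); then, given a $c$-bounded $f \in \mathfrak F$ on $\spc X \in \mathfrak X$, pass to $(\spc X, d')$ with $d' = d/c$ and $f' = f/c$. The function $f'$ is $1$-bounded, and $(\spc X, d')$ lies in $\mathfrak X$ because both being a complete separable metric space and being compact are preserved by positive rescaling of the metric (a sequence of finite covers of radius $2^{-i}$ for $(\spc X, d)$ becomes one of radius $2^{-i}/c$ for $(\spc X, d')$, reindexed to fit Definition~\ref{def-compact}). A direct check verifies that the $1$-critical condition (and the LVP condition) for $f'$ on $(\spc X, d')$ coincides with that for $f$ on $(\spc X, d)$.

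For Part~(3), the real task is to internalize the metric rescaling of Part~(2) inside the single fixed $\spc X$, since the formally rescaled code $(\spc X, d/c)$ is not the same object as $\spc X$. In case~(a), the linear self-map $\phi_c(x) = c x$ on a Banach space is a bijection satisfying $d(\phi_c(x), \phi_c(y)) = c\, d(x, y)$, so given a $c$-bounded $f$ I would apply the RHS ($1$-FVP for $\spc X$ and $1$-bounded LSC) to the $1$-bounded LSC potential $\tilde f(y) = f(c y)/c$ and transport the resulting $1$-critical $y_*$ back via $x_* = c y_*$; the LVP starting point transports as $y_0 = x_0/c$. In cases~(b) and~(c), the prepending map $\phi_k(y) = 0^k \smf y$ is an isometric embedding of $\spc X$ into itself with scaling factor $2^{-k}$; given $c$-bounded $f$ I would choose $k$ with $2^{k-1} > c$, define $\tilde f$ to equal $f \circ \phi_k^{-1}/2^k$ on the clopen image of $\phi_k$ and $1$ on its complement, verify $\tilde f$ is LSC via Lemma~\ref{lem-code-for-open}, and apply the RHS. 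Case~(d) is analogous using a contracting linear map of the ball into a concentric smaller subball. The main obstacle is showing that the supplied $1$-critical $y_*$ actually lies in the image of $\phi_k$ in cases~(b)--(d); this is verified by testing the $1$-critical inequality at $z = \phi_k(z_0)$ for $z_0$ approaching $\inf f$, deriving a contradiction for $y_*$ outside the image using that lower semi-continuous potentials attain their infimum on compact spaces (cases~(c) and~(d)) and the discrete values of the Baire metric (case~(b)).
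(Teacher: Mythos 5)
Your treatment of parts (1) and (2) is correct and is essentially the paper's argument (the paper performs the two scalings of part (2) in a single step, taking $f' = \nicefrac fb$ and $d' = \nicefrac{\varepsilon d}{b}$, but this is the same reduction), and your case (a) of part (3) via the linear self-map of a Banach space also matches the paper.

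The gap is in part (3), cases (b)--(d), for the \emph{free} principle. You embed a single shrunken copy of $\spc X$ into itself, set $\tilde f = 1$ off its image, and correctly identify that you must show the $1$-critical point $z_*$ returned by the $1$-$\fvp$ lies in that image; but your proposed justification fails. In the Baire/Cantor case, if $z_*$ already deviates from $0^k$ at position $0$, then for any $z = \phi_k(z_0)$ we have $d(z_*,z) = 1$ while $\tilde f(z_*) - \tilde f(z) = 1 - f(z_0)/2^k < 1$ whenever $f(z_0) > 0$, so no violation of $1$-criticality arises; concretely, for the constant potential $f \equiv c > 0$, every point with $z_*(0) = 1$ is $1$-critical for $\tilde f$ and yields no information about $f$. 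The same quantitative failure occurs for a single concentric subball of a ball of radius comparable to $1$ in $\Rb^n$. Moreover, the fact you invoke to force containment --- that lower semi-continuous potentials attain their infimum on compact spaces --- is not available in $\rca$ (it is an $\aca$-level statement, cf.\ Theorem~\ref{thm-CritPtsExist}), so it cannot be used in this lemma even where it would help.

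The paper sidesteps containment by two different devices. For the $\fvp$ on the Baire and Cantor spaces it defines $g(\sigma^\smf y) = f(y)$ (suitably rescaled) for \emph{every} $\sigma$ of length $n$, so $g$ is given by chopping on the whole space and any critical point whatsoever is useful; the single-cylinder construction with value $1$ outside is used only for the $\lvp$, where the localization inequality $d(z_0,z_*) \le \tilde f(z_0) - \tilde f(z_*)$ genuinely forces $z_*$ into the cylinder (your construction is fine for that half). For a closed ball, the paper first normalizes $f$ to be bounded by $\nicefrac 12$ and plants shrunken copies of radius $\delta \le \nicefrac 14$ at every point of a finite $\nicefrac 14$-net $A$; a critical point outside all copies is then within $\nicefrac 14 + \nicefrac 14 = \nicefrac 12$ of a point where $g \le \nicefrac 12$, and $1 - \nicefrac 12 \ge \nicefrac 12$ gives the contradiction. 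You would need to incorporate one of these devices (tile the space with copies, or use a dense net of copies with the $\nicefrac 12$/$\nicefrac 14$ bookkeeping) to close the $\fvp$ half of cases (b)--(d).
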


\begin{proof}
\eqref{itEpsUnbounded} Replace $f$ by $f' = \nicefrac f \varepsilon$ and note that any $1$-critical point for $f'$ is $\varepsilon$-critical for $f$.
\medskip

\noindent \eqref{itEpsBigClass} First replace $f$ by $f' = \nicefrac fb$, so that $f'$ is bounded by $1$ and any $\nicefrac \varepsilon b$-critical point for $f'$ is an $\varepsilon$-critical point for $f$.
As we can no longer freely scale $f'$, we instead scale the metric and consider $d' = \nicefrac {\varepsilon d} b $.
Then the reader can verify that any $1$-critical point for $f'$ with respect to $d'$ is an $\varepsilon$-critical point for $f$ with respect to $d$.\medskip

\noindent \eqref{itEpsSingletons}
The general idea is that in all of these cases, the proof of \eqref{itEpsBigClass} can be simulated (in a possibly {\em ad-hoc} way) without modifying the space $\spc X$.
For the sake of illustration we sketch the proof in the case where $\spc X$ is a closed ball in $\Rb^n$---we give an informal argument and let the reader verify that it can be carried out in $\rca$.
The $\fvp$ and $\lvp$ cases are similar, so we focus on the $\fvp$.

Assume that the $1$-$\fvp$ holds for $\spc X$.
Without loss of generality we may assume that $\spc X = \overline{\ball 0\rho}$ for some $\rho>0$.
Let $f$ be a bounded lower semi-continuous function on $\spc X$ with non-empty domain, and let $\varepsilon>0$. Without loss of generality we may assume that $f$ is bounded by $\nicefrac 12$ (adjusting $\varepsilon$ if needed).

Let $A\subseteq {\ball 0 \rho}\cap \Qb^n$ be finite and such that every point in $\ball 0 \rho$ is within distance $\nicefrac 14$ of some point in $A$.  Let $\delta > 0$ be small enough so that $  \delta   < \min \{ \varepsilon\rho,\nicefrac 14\} $ and $d(a,a')> 2\delta$ whenever $a,a'\in A$ are distinct. Define $g\colon \spc X \to \overline{\Rb}_{\geq 0}$ by 
\[
g(x)=
\begin{cases}
f(\frac \rho \delta (x-a)) & \text{if $x\in \overline{\ball a\delta}$ for some $a \in A$}\\
1&\text{otherwise}.
\end{cases}
\]
By the $1$-$\fvp$, let $x_\ast$ be a $1$-critical point of $g$.
We claim that $x_\ast\in \overline{\ball a\delta}$ for some $a \in A$.
If not, then $g(x_\ast) = 1$ by the definition of $g$.
Let $a\in A$ minimize $d(x_\ast,a)$, and note that $d(x_\ast,a) <\nicefrac 14$ by our choice of $A$.
Let $y_0\in {\rm supp} ( f )$.
Then $x_0:=\frac \delta \rho y_0 + a \in \overline{\ball a\delta}$ and $g(x_0) = f(y_0) \leq \nicefrac 12$, so that
\[g(x_\ast) - g(x_0 ) \geq 1-\nicefrac 12 =\nicefrac 14 + \nicefrac 14 \geq d(x_0,a) + d(a,x_\ast) \geq d(x_0,x_\ast). \]
This contradicts that $x_\ast$ is a $1$-critical point of $g$.
Finally, we claim that $y_\ast := \frac \rho \delta (x_\ast-a)$ is an $\varepsilon$-critical point of $f$.
Let $y \in {\rm supp} ( f ) $ with $y \neq x_\ast$  be arbitrary, and let $x:=\frac \delta \rho y + a \in \ball a\delta $. Then since $x_\ast$ is a $1$-critical point of $g$,
\[g( x _\ast) - g(x) < d( x_\ast,x ) ,\]
which by the definition of $g$ becomes
\[f(y_\ast) - f(y) < d\left( \frac \delta \rho y_\ast + a, \frac \delta \rho y  + a \right) = \frac \delta \rho d(y_\ast,y ) < \varepsilon d(y_\ast,y ), \]
as needed.

The proof that the $1$-$\lvp$ implies the bounded $\lvp$ is similar, except that it is more convenient to replace $A$ by the singleton $\{0\}$.
The proofs of the other items are analogous, with the main difference being the choice of $g$.
Let us assume that $f$ is bounded by $1$.
For \eqref{itBanachSpace} we set $g(x) = f\left(\frac{1}{\varepsilon}x\right)$.
For items \eqref{itBaireSpace} and \eqref{itCantorSpace} we use the observation that $d(\sigma^\smf x, \sigma^\smf y) = 2^{-|\sigma|}d(x, y)$ for every $\sigma$, $x$, and $y$.
Let $n$ be large enough so that $2^{-n} < \varepsilon$.  For the $\fvp$ define
$g(\sigma^\smf y) =  f(y)$, where $|\sigma| = n$. That is, given $x$, $g(x)$ chops off $x \rest n$ and applies $f$ to the remaining sequence $y$.
For the $\lvp$, we define $g((0^n)^\smf y) =  f(y)$ and $g(x) = 1$ if $x$ does not begin with $n$ zeroes.
We leave to the reader to check that in each of this cases, any $1$-critical point for $g$ induces an $\varepsilon$-critical point for $f$, satisfying the localization condition when appropriate.
\end{proof}

With Lemma~\ref{lemEpsRest} in mind, we often take $\varepsilon = 1$ or, in the bounded case, $f$ bounded by $1$, and we write {\em critical point} instead of {\em $1$-critical point}.
Note however that if we fix $\spc X$ beforehand, the $\fvp$ or $\lvp$ for $\spc X$ with bounded $f$ are not necessarily equivalent to the $1$-$\fvp$ or the $1$-$\lvp$ for $\spc X$, respectively, since we may not be able to perform the required scaling transformations without modifying the space.
Claim \eqref{itEpsSingletons} gives a few examples where such transformations may be performed, and while the list is by no means meant to be exhaustive, we conjecture that a space $\spc X$ can be found so that the $1$-$\fvp$ and the $\fvp$ for $\spc X$ are not provably equivalent in $\rca$.

Note also that in the cases \eqref{itBanachSpace}-\eqref{itCantorSpace}, if the function $f$ is continuous, then so is $g$. Thus in these cases, the claim also holds for the respective variational principles restricted to continuous functions.
Our construction for \eqref{itBallRn} does not preserve continuity, but it should also be possible to define some $\tilde g$ which is continuous if $f$ was (similar to the $2$-envelope of $g$). Note however that some care would be required to construct $\tilde g$ in $\rca$.
In any case, the current formulation of the lemma suffices for our purposes.
 
Let us now begin our analysis by showing that $\wkl$ suffices to construct critical points of continuous functions on compact spaces.

\begin{Proposition}[$\wkl$]\label{propWKLtoFVP}
The $\fvp$ holds for compact $\spc X$ and continuous $f$; in fact, in this setting $f$ attains its minimum.
\end{Proposition}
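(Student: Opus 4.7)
My plan is to reduce to showing that $f$ attains its minimum on $\spc X$, since Lemma~\ref{lemmCritApproxMin} then yields that any minimizer is $\varepsilon$-critical for every $\varepsilon > 0$. What remains is essentially the classical fact that in $\wkl$, every continuous real-valued function on a compact metric space attains its extrema.

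The first step is to invoke Theorem~\ref{thm-WKLCompact} to promote compactness of $\spc X$ to Heine--Borel compactness. Applying Heine--Borel to the cover $\{\{x \in \spc X : f(x) < n\} : n \in \Nb\}$ yields a uniform upper bound on $f$, and so $\alpha := \inf_{x \in \spc X} f(x)$ exists as a real number. Indeed, in $\wkl$ one can pin down $\alpha$ by combining upper bounds (witnessed by concrete points of $\spc X$) with lower bounds (witnessed by finite subcovers of $\spc X$ by open sets of the form $\{x \in \spc X : f(x) > q\}$).

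Next, I would produce a minimizing point $x_\ast$. Using the $2^{-n}$-nets $(x_{n,j})_{j < k_n}$ for $\spc X$ furnished by Definition~\ref{def-compact}, I would construct a finitely branching tree $T$ of nested rational ball choices such that every level-$n$ node corresponds to a ball whose closure meets $\{x \in \spc X : f(x) < \alpha + 2^{-n}\}$. Each level of $T$ is nonempty: otherwise, a finite subcover by rational balls on which $f \geq \alpha + 2^{-n}$ would contradict $\alpha = \inf f$. Applying $\wkl$ (in its finitely branching form, equivalent to $\wkl$ over $\rca$) yields an infinite path through $T$, whose associated nested balls converge to the desired $x_\ast$ with $f(x_\ast) = \alpha$ by continuity of $f$.

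The main obstacle is that $T$ is naturally defined by a $\Sigma^0_1$ condition rather than a decidable one, so its existence in $\rca$ requires some care. The cleanest workaround is to use the fact that in $\wkl$ the continuous function $f$ has a modulus of uniform continuity on the compact space $\spc X$: this replaces the $\Sigma^0_1$ membership condition with a decidable one computable from the net structure of $\spc X$, a rational approximation to $\alpha$, and the modulus. Once $T$ is built, the remainder of the argument is routine.
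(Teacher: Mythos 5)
Your proposal is correct and follows essentially the same route as the paper: reduce to the fact that in $\wkl$ a continuous function on a compact complete separable metric space attains its minimum, and then apply Lemma~\ref{lemmCritApproxMin}. The only difference is that the paper simply cites this extreme value theorem (\cite[Theorem~IV.2.2]{SimpsonSOSOA}), whereas you sketch its proof via Heine--Borel compactness and bounded K\"onig's lemma, with the modulus-of-uniform-continuity fix for the $\Sigma^0_1$ tree being the standard and correct way to handle the definability issue you identify.
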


\begin{proof}
$\wkl$ proves that every continuous function from a compact complete separable metric space to $\Rb$ attains a minimum value (see~\cite[Theorem~IV.2.2]{SimpsonSOSOA} for the version of this fact with `maximum' in place of `minimum').  Thus let $x_* \in \spc X$ be a point at which $\fnc$ attains its minimum value. Then $x_*$ is a critical point of $\fnc$ by Lemma \ref{lemmCritApproxMin}.
\end{proof}

When working over $\aca$, we may drop the assumption that $\spc X$ is compact.  Our proof is based on those in~\cites{Brezis1976,Du16}.

\begin{Theorem}[$\aca$]\label{thm-ContinuousCritPt}
The $\fvp$ holds for arbitrary $\spc X$ and continuous $f$.
\end{Theorem}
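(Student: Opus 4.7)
The plan is to adapt the Brézis--Browder argument sketched in the introduction to $\aca$. By Lemma~\ref{lemEpsRest}\eqref{itEpsUnbounded} I may assume $\varepsilon = 1$; if $\lscsupp(f) = \varnothing$ the statement is vacuous, so I fix any $x_0 \in X$. For each $x \in \spc X$ the descent set $S(x) = \{y \in \spc X : d(x, y) + f(y) \leq f(x)\}$ is the closed $f(x)$-sublevel set of the continuous function $g_x(y) = d(x, y) + f(y)$, and $y \peq x$ iff $y \in S(x)$, a relation which is transitive by the triangle inequality.

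The recursion will build a sequence $(x_n)$ of points of $\spc X$, each presented as a Cauchy sequence in $X$, with $x_{n+1} \in S(x_n)$ and $f(x_{n+1}) < \alpha_n + 2^{-n}$, where $\alpha_n = \inf_{S(x_n)} f$. The access to $\alpha_n$ inside $\aca$ goes through the relaxed open sets $V_\delta(x_n) = \{y : g_{x_n}(y) < f(x_n) + \delta\}$ for $\delta > 0$. For each $\delta$ the set $X \cap V_\delta(x_n)$ is $\Sigma^0_1$-definable from the code for $f$, so $\beta_{n, \delta} := \inf\{f(a) : a \in X \cap V_\delta(x_n)\}$ exists in $\aca$ as the infimum of an arithmetically definable bounded sequence of rationals (Theorem~\ref{thm-SeqCompact}\eqref{itRational}). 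These values are monotone in $\delta$, and letting $\delta \to 0^+$ yields $\alpha_n \in \overline \Rb$ as a limit available in $\aca$; then $x_{n+1}$ is produced as a Cauchy sequence of $X$-points selected from $V_{\delta_k}(x_n)$ for a fast-decreasing sequence $\delta_k \to 0$, with enough bookkeeping that the diameters of the successive choices shrink.

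With the sequence in hand, the rest of the argument is standard. The inclusion $x_{n+1} \in S(x_n)$ gives $d(x_n, x_{n+1}) \leq f(x_n) - f(x_{n+1})$, so $\sum_n d(x_n, x_{n+1}) \leq f(x_0)$ by telescoping and $(x_n)$ is Cauchy with limit $x_* \in \spc X$. Transitivity of $\peq$ together with closedness of $S(x_n)$ yields $x_* \in S(x_n)$ for every $n$. To verify criticality, suppose $y \in \spc X$ satisfies $y \neq x_*$ and $d(x_*, y) \leq f(x_*) - f(y)$. Transitivity then gives $y \in S(x_n)$, whence $f(y) \geq \alpha_n > f(x_{n+1}) - 2^{-n}$. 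Letting $n \to \infty$ and using continuity of $f$, we get $f(y) \geq f(x_*)$, which combined with $d(x_*, y) \leq f(x_*) - f(y) \leq 0$ forces $y = x_*$, a contradiction.

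The main obstacle is the second paragraph: computing $\alpha_n$ and producing $x_{n+1}$ inside $\aca$. In a general complete separable metric space, minimizers of $f$ on $S(x_n)$ can sit on the boundary $\{g_{x_n} = f(x_n)\}$ at points outside $X$, and $\aca$ does not in general provide sequential compactness of bounded sets. Verifying that the $X$-infima $\beta_{n, \delta}$ over the relaxed open supersets indeed converge to $\alpha_n$, and then selecting Cauchy sequences of $X$-points realizing this limit, requires exploiting continuity of $g_{x_n}$ and the specific structure of sublevel sets of continuous functions rather than those of arbitrary closed sets. This is the technical heart of the proof; once it is in hand, the Brézis--Browder--style chain argument translates without further issue.
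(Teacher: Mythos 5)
There is a genuine gap, and it sits exactly where you flag it: the second paragraph does not work as described, and the specific mechanism you propose for computing $\alpha_n$ and selecting $x_{n+1}$ is not merely incomplete but incorrect. First, the claim that $\beta_{n,\delta} \to \alpha_n$ as $\delta \to 0^+$ is false in general: since $V_\delta(x_n)$ is open and $f$ is continuous, $\beta_{n,\delta} = \inf_{V_\delta(x_n)} f$, and the infimum of $f$ over a decreasing family of open sets can stay strictly below the infimum over their intersection $S(x_n)$. For a concrete failure, take $\spc X = \Nb$ with the discrete metric $d(j,k)=1$ for $j \neq k$, $x_n = 0$, $f(0)=1$, $f(k)=\nicefrac 1k$; then $S(0)=\{0\}$ so $\alpha_n = 1$, but every $V_\delta(0)$ contains all $k$ with $\nicefrac 1k < \delta$, so $\beta_{n,\delta}=0$ for all $\delta>0$. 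Second, and relatedly, the proposed selection of $x_{n+1}$ as a limit of $X$-points drawn from the $V_{\delta_k}(x_n)$ cannot be carried out: in the same example any sequence of near-minimizers in $V_{\delta_k}(0)$ escapes to infinity and is not Cauchy, and in general there is no way to force "the diameters of the successive choices to shrink" without sequential compactness, which $\aca$ does not supply for arbitrary $\spc X$. Producing a near-minimizer of $f$ on the closed set $S(x_n)$ is essentially as hard as the theorem itself, so this sub-step is circular.

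The paper's proof resolves this by never touching the closed descent sets at all. It works exclusively with the relaxed sets restricted to the countable dense set, $S(a,q) = \{b \in X : d(a,b) < \fnc(a)-\fnc(b)+q\}$, takes each $a_{n+1}$ to be an actual element of $X$ (the first $b \in S(a_n,q_n)$ with $\fnc(b) < u_{a_n,q_n} + 2^{-(n+1)}$, where $u_{a,q}=\inf_{b \in S(a,q)}\fnc(b)$ is an infimum of an arithmetically definable set of reals and hence exists in $\aca$), and pays for the relaxation with error terms: the chain inequality becomes $d(a_k,a_{k+1}) \leq \fnc(a_k)-\fnc(a_{k+1})+q_k$, so $\fnc$ is no longer monotone along the chain and one must pass to a subsequence $(a_{i_n})$ realizing $\liminf_n \fnc(a_n)$ to get a Cauchy sequence. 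The accumulated slack $\sum_{k \geq n} q_k$ is controlled against a rate-of-convergence function $R(a,p)$ for $u_{a,q} \to r_a = \lim_{q\to 0^+} u_{a,q}$ (whence the choice $q_{n+1}=2^{-(n+3)}\prod_{i\leq n+1}R(a_i,2^{-i})$), and the final criticality check replaces your "$y \in S(x_n)$, hence $f(y)\geq\alpha_n$" by the observation that $y$ lies in the open set $\{x : d(a_{i_n},x) < \fnc(a_{i_n})-\fnc(x)+R(a_{i_n},2^{-i_n})\}$ in which $S(a_{i_n},R(a_{i_n},2^{-i_n}))$ is dense, giving $\fnc(y) \geq u_{a_{i_n},R(a_{i_n},2^{-i_n})}$. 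Your outer Br\'ezis--Browder skeleton is the right one, but the $\aca$-implementation requires this reorganization around the relaxed sets rather than a repair of the closed-set version.
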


\begin{proof}
Let $\spc X = \widehat{X}$ be a complete separable metric space with metric $d$, and in view of Lemma \ref{lemEpsRest}.\ref{itEpsUnbounded}, assume that $\varepsilon = 1$. First we use $\aca$ to collect some information in order to implement the proof.  Define $S \subseteq X \times X \times \Qb_{>0}$ by
\begin{align*}
S = \{(a, b, q)  \in X \times X \times \Qb_{>0} : d(a,b) < \fnc(a) - \fnc(b) + q\},
\end{align*}
and for $a \in X$ and $q \in \Qb_{>0}$, write $S(a,q)$ for $\{b \in X : (a, b, q) \in S\}$.  Define a sequence of reals $(u_{a,q} : a \in X \andd q \in \Qb_{>0})$ by $u_{a,q} = \inf_{b \in S(a,q)}\fnc(b)$ for each $a \in X$ and $q \in \Qb_{>0}$, which always exists because $\fnc$ is bounded from below and $S(a,q)$ is non-empty (as it contains $a$).  Next, define a sequence of reals $(r_a : a \in X)$ by $r_a = \sup_{q \in \Qb_{>0}}u_{a,q}$ for each $a \in X$, which always exists because $(\forall q \in \Qb_{>0})(u_{a,q} \leq \fnc(a))$.  That the sequences $(u_{a,q} : a \in X \andd q \in \Qb_{>0})$ and $(r_a : a \in X)$ can be defined in $\aca$ follows from the appropriately uniformized version of the (1)$\Imp$(4) direction of~\cite[Theorem~III.2.2]{SimpsonSOSOA} (and the analogous implication with `greatest lower bound' in place of `least upper bound' in (4)).  Observe that $(\forall a \in X)(\forall q_0, q_1 \in \Qb_{>0})(q_0 \leq q_1 \imp u_{a, q_1} \leq u_{a, q_0})$, which means that $(\forall a \in X)(r_a = \lim_{q \imp 0^+}u_{a,q})$.  Finally, define a function $R \colon X \times \Qb_{>0} \imp \Qb$ describing the rates of convergence of the sequences $(u_{a,q} : q \in \Qb_{>0})$ for each $a \in X$ by letting $R(a,p) = 2^{-n}$, where $n$ is least such that $(\forall q \in \Qb_{>0})(q \leq 2^{-n} \imp |r_a - u_{a,q}| < p)$.

Now we define a sequence $(a_n : n \in \Nb)$ of points in $X$ converging to a critical point of $\fnc$ and a helper sequence $(q_n : n \in \Nb)$ of points in $\Qb_{>0}$.  Let $a_0$ be the first point (that is, the point with the least code) in $X$, and let $q_0 = 2^{-2}R(a_0, 1)$.  Given $(a_i : i \leq n)$ and $(q_i : i \leq n)$, let $a_{n+1}$ be the first point in $S(a_n, q_n)$ such that $\fnc(a_{n+1}) < u_{a_n, q_n} + 2^{-(n+1)}$, and let $q_{n+1} = 2^{-(n+3)}\prod_{i \leq n+1}R(a_i, 2^{-i})$.  The important property of $(q_n : n \in \Nb)$ is that, for every $n \in \Nb$,
\begin{align*}
\sum_{k \geq n} q_k < \min\{2^{-n}, R(a_n, 2^{-n})\},
\end{align*}
which holds because $(\forall n)(\forall k \geq n)[q_k \leq 2^{-(k+2)}R(a_n, 2^{-n})]$.

As the sequence $(\fnc(a_n) : n \in \Nb)$ is bounded from below, let $\ell = \liminf_{n \in \Nb}\fnc(a_n)$ (which exists by~\cite[Lemma~III.2.1]{SimpsonSOSOA} with `$\liminf$' in place of `$\limsup$').  Observe that $\liminf_{n \in \Nb} u_{a_n, q_n} = \ell$ as well because $\forall n(u_{a_n, q_n} \leq \fnc(a_{n+1}) < u_{a_n, q_n} + 2^{-(n+1)})$.  Let $i_0 < i_1 < i_2 < \dots$ be an increasing sequence of indices such that $\lim_{n \in \Nb}\fnc(a_{i_n}) = \ell$.  We show that the corresponding sequence $(a_{i_n} : n \in \Nb)$ is Cauchy.  Given $M \in \Nb$, let $N > M$ be large enough so that $\forall n,m (N < n < m \imp |\fnc(a_{i_n}) - \fnc(a_{i_m})| \leq 2^{-(M+1)})$.  Then, for every $n,m \in \Nb$ with $N < n < m$, we have that
\begin{align}
\nonumber d(a_{i_n}, a_{i_m})
&\leq \sum_{k = i_n}^{i_m - 1}d(a_k, a_{k+1})\\
\nonumber &\leq \sum_{k = i_n}^{i_m - 1} [\fnc(a_k) - \fnc(a_{k+1}) + q_k]\\
\label{ineqAIM} &\leq \fnc(a_{i_n}) - \fnc(a_{i_m}) + \sum_{k \geq i_n}q_k\\
\nonumber &\leq 2^{-(M+1)} + 2^{-(M+1)} \leq 2^{-M}.
\end{align}
In the above expression, the first inequality is the triangle inequality, the second inequality is because $a_{k+1}$ is chosen from $S(a_k, q_k)$ for every $k \in \Nb$, the third inequality is obtained by canceling the appropriate terms in the telescoping sum, and the fourth inequality is by the choice of $N$ and by recalling that $\sum_{k \geq i_n}q_k < 2^{-i_n}$ (and that $i_n \geq n > N > M$).  Thus $(a_{i_n} : n \in \Nb)$ is Cauchy.  By the (1)$\Imp$(3) direction of~\cite[Theorem~III.2.2]{SimpsonSOSOA}, $(a_{i_n} : n \in \Nb)$ converges to some $x_* \in \spc X$.

We show that $x_*$ is a critical point of $\fnc$.  By the continuity of $\fnc$, $\fnc(x_*) = \lim_{n \in \Nb}\fnc(a_{i_n}) = \ell$.  We show that if $y \in \spc X$ is such that $d(x_* ,y) \leq \fnc(x_*) - \fnc(y)$, then $\fnc(y) = \ell$ as well.  Thus consider a $y \in \spc X$ such that $d(x_* ,y) \leq \fnc(x_*) - \fnc(y)$.  Clearly $\fnc(y) \leq \ell$ because otherwise we would have the contradiction $d(x_*,y) < 0$.  So we show that $\fnc(y) \geq \ell$.  To do this, fix $n \in \Nb$.  We see that $d(a_{i_n}, x_*) \leq \fnc(a_{i_n}) - \fnc(x_*) + \sum_{k \geq i_n}q_k$ by considering the inequality \eqref{ineqAIM} as $m \imp \infty$.  Therefore,
\begin{align*}
d(a_{i_n}, y) &\leq d(a_{i_n}, x_*) + d(x_*, y)\\
&\leq \fnc(a_{i_n}) - \fnc(x_*) + \sum_{k \geq i_n}q_k + \fnc(x_*) - \fnc(y)\\
&= \fnc(a_{i_n}) - \fnc(y) + \sum_{k \geq i_n}q_k.
\end{align*}
Thus $d(a_{i_n}, y) < \fnc(a_{i_n}) - \fnc(y) + R(a_{i_n}, 2^{-i_n})$ because $\sum_{k \geq i_n}q_k < R(a_{i_n}, 2^{-i_n})$.  It follows that $\fnc(y) \geq u_{a_{i_n}, R(a_{i_n}, 2^{-i_n})}$ because $y$ is in the open set
\[\MC U = \{x \in \spc X : d(a_{i_n}, x) < \fnc(a_{i_n}) - \fnc(x) + R(a_{i_n}, 2^{-i_n})\}\]
and $S(a_{i_n}, R(a_{i_n}, 2^{-i_n}))$ is dense in $\MC U$.  Observe also that $q_{i_n} \leq R(a_{i_n}, 2^{-i_n})$, which implies that $u_{a_{i_n}, q_{i_n}} - u_{a_{i_n}, R(a_{i_n}, 2^{-i_n})} < 2^{-i_n}$.  Therefore $\fnc(y) > u_{a_{i_n}, q_{i_n}} - 2^{-i_n}$.  So we have shown that $\forall n (\fnc(y) > u_{a_{i_n}, q_{i_n}} - 2^{-i_n})$.  As $(u_{a_{i_n}, q_{i_n}} : n \in \Nb)$ is a subsequence of $(u_{a_n, q_n} : n \in \Nb)$, it follows that $\liminf_{n \in \Nb} u_{a_{i_n}, q_{i_n}} \geq \liminf_{n \in \Nb} u_{a_n, q_n} = \ell$.  Thus we conclude that $\fnc(y) \geq \ell$, as desired.  This completes the proof that if $d(x_*, y) \leq \fnc(x_*) - \fnc(y)$, then $\fnc(y) = \fnc(x_*) = \ell$.  So if $y \in \spc X$ satisfies $d(x_*, y) \leq \fnc(x_*) - \fnc(y)$, then $y = x_*$.  Hence $x_*$ is a critical point of $\fnc$.
\end{proof}

\section{Critical points of arbitrary potentials}\label{secCPAP}

Now let us consider the case where $f$ is a possibly discontinuous potential.
As it turns out, this general case can be reduced to the continuous case by using envelopes.
To be precise, in order to find an $\alpha$-critical point of a lower semi-continuous potential $\fnc$, it suffices to find an $\alpha$-critical point of the $\beta$-envelope $\fnc_\beta$ for any $\beta > \alpha$.

\begin{Lemma}[$\rca$]\label{lem-EnvelopeCritPt}
Let $\spc X$ be a complete separable metric space, $ 0 < \alpha <\beta$, $\fnc \colon \spc X \imp \overline{\Rb}_{\geq 0}$ be a potential with $\lscsupp (f) \not = \varnothing$, and suppose that there is a continuous function $\fnc_\beta \colon \spc X \imp \Rb_{\geq 0}$ such that, for every $x \in \spc X$,
\begin{align*}
\fnc_\beta(x) = \inf_{y \in \spc X}(\fnc(y) + \beta d(x,y)).
\end{align*}
Then, for any $x_\ast \in \spc X$,

\begin{enumerate}

\item if $x_* $ is an $\alpha$-critical point of $\fnc_\beta$, then $\fnc(x_*) = \fnc_\beta(x_*)$ and $x_*$ is an $\alpha$-critical point of $\fnc$.

\item If $\fnc _\beta$ attains its minimum at $x_* $, then $\fnc$ attains its minimum at $x_*$.

\end{enumerate}

\end{Lemma}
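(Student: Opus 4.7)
The plan is to establish part~(1) first and then derive part~(2) as an easy consequence. A basic observation used throughout is the pointwise inequality $\fnc_\beta(x) \leq \fnc(x)$, obtained by taking the argument of the infimum at $y = x$ in the definition of $\fnc_\beta$.

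The heart of the matter is to show that $\fnc(x_*) = \fnc_\beta(x_*)$ under the hypothesis of~(1). I argue by contradiction, assuming $\fnc(x_*) > \fnc_\beta(x_*)$. Choose any $\lambda \in \Rb$ with $\fnc_\beta(x_*) < \lambda < \fnc(x_*)$, which exists regardless of whether $\fnc(x_*)$ is finite or equals $\infty$. Lower semi-continuity of $\fnc$ at $x_*$ produces $\eta > 0$ with $\fnc(z) > \lambda$ for all $z$ satisfying $d(x_*, z) < \eta$. Next, pick $\varepsilon \in \Qb_{>0}$ small enough that both $\varepsilon < \lambda - \fnc_\beta(x_*)$ and $\varepsilon \leq (\beta - \alpha)\eta$ hold, and use the definition of $\fnc_\beta(x_*)$ as an infimum to choose $z \in \spc X$ with $\fnc(z) + \beta d(x_*, z) < \fnc_\beta(x_*) + \varepsilon$. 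The first constraint on $\varepsilon$ combined with the LSC bound forces $d(x_*, z) \geq \eta$, so in particular $z \neq x_*$, while the second constraint yields
\[\fnc_\beta(x_*) - \fnc_\beta(z) \geq \fnc_\beta(x_*) - \fnc(z) > \beta d(x_*, z) - \varepsilon \geq \alpha d(x_*, z),\]
where the last inequality uses $(\beta - \alpha) d(x_*, z) \geq (\beta - \alpha)\eta \geq \varepsilon$. This contradicts the $\alpha$-criticality of $x_*$ for $\fnc_\beta$.

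With $\fnc(x_*) = \fnc_\beta(x_*)$ in hand, the $\alpha$-criticality of $x_*$ for $\fnc$ is immediate: if $y \in \spc X$ satisfies $\alpha d(x_*, y) \leq \fnc(x_*) - \fnc(y)$, then substituting $\fnc_\beta(x_*) = \fnc(x_*)$ and using $\fnc_\beta(y) \leq \fnc(y)$ yields $\alpha d(x_*, y) \leq \fnc_\beta(x_*) - \fnc_\beta(y)$, which forces $y = x_*$. For part~(2), if $\fnc_\beta$ attains its minimum at $x_*$ then by Lemma~\ref{lemmCritApproxMin} it is an $\alpha$-critical point of $\fnc_\beta$; applying~(1) and then the pointwise inequality gives $\fnc(x_*) = \fnc_\beta(x_*) \leq \fnc_\beta(y) \leq \fnc(y)$ for every $y \in \spc X$, as required.

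The main subtlety is the coordinated selection of $\lambda$, $\eta$, and $\varepsilon$, which must be chosen in that order so that the slack $\beta - \alpha > 0$ exactly absorbs the error term $\varepsilon$ while the LSC radius $\eta$ simultaneously keeps $z$ bounded away from $x_*$. Since the argument uses only basic arithmetic on rationals together with the defining properties of the envelope and of lower semi-continuity, formalization in $\rca$ should present no additional difficulty.
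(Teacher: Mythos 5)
Your proof is correct and takes essentially the same approach as the paper's: both argue by contradiction that $f(x_*)=f_\beta(x_*)$ by using lower semi-continuity to produce a radius around $x_*$, choosing a near-minimizer of the envelope infimum with slack small relative to $(\beta-\alpha)$ times that radius, and showing the resulting point violates the $\alpha$-criticality of $x_*$ for $f_\beta$; the remaining steps (criticality for $f$, and part (2) via Lemma~\ref{lemmCritApproxMin}) also match, with your derivation of part (2) being marginally more direct. No gaps.
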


\begin{proof}
In view of Lemma \ref{lemmCritApproxMin}, the second item is a consequence of the first: indeed, if $\fnc _\beta$ attains its minimum at $x_* \in \spc X$, then for every $\alpha < \beta$ we have that $x_\ast$ is $\alpha$-critical for $f_\beta$, so that by the first item it is $\alpha$-critical for $f$. Since $\alpha$ is arbitrary, $f$ attains its minimum at $x_\ast$.

Thus we focus on the first item.
Suppose that $x_*$ is an $\alpha$-critical point of $\fnc_\beta$; we begin by showing that $f(x_*) = f_\beta(x_*)$.
Clearly $\fnc_\beta(x_*) \leq \fnc(x_*)$, so suppose for a contradiction that $\fnc_\beta(x_*) < \fnc(x_*)$.
Let $\varepsilon = 1$ if $f(x_*) = \infty$ and $\varepsilon = \frac{\fnc(x_*) - \fnc_\beta(x_*)}{2} > 0$ otherwise, and let $\delta \in \Qb_{>0}$ be such that $\fnc(y) > \fnc_\beta (x_*) + \varepsilon$ whenever $d(x_*, y) < \delta $.  Such a $\delta$ can be obtained by letting $\Phi$ be the code for $\fnc$; letting $\ball{a}{r} \subseteq \spc X$ and $q \in \Qb_{>0}$ be such that $\ball{a}{r} \vcode \Phi q$, $x_* \in \ball{a}{r}$, and $q > \fnc_\beta (x_*) + \varepsilon$; and then taking $\delta <  r - d(a, x_*)$.
  Now let $y \in \spc X$ be such that
\[\fnc_\beta(x_*) + \min\{(\beta - \alpha ) \delta, \varepsilon\} > \fnc(y) + \beta d(x_*,y) .\]
Note that $y \neq x_*$, for otherwise $\fnc_\beta(x_*) > \fnc(x_*) - \min\{ (\beta - \alpha ) \delta, \varepsilon\} \geq \fnc_\beta (x_*) + \varepsilon$, a contradiction.  Moreover, we cannot have that $d(x_*, y) < \delta$, for otherwise
\begin{align*}
\fnc_\beta(x_*) &> \fnc(y) + \beta d(x_*, y) - \min\{ (\beta - \alpha ) \delta, \varepsilon\}\\
&> (\fnc_\beta(x_*) + \varepsilon) + \beta d(x_*, y) - \min\{ (\beta - \alpha ) \delta, \varepsilon\}\\
&\geq \fnc_\beta(x_*) + \beta d(x_*, y),
\end{align*}
where the first inequality is by the choice of $y$ and the second inequality is by the choice of $\delta$ and the assumption $d(x_*, y) < \delta$.  Thus it must be that $d(x_*, y)\geq \delta$.  Therefore,
\begin{align*}
\alpha d(x_*, y)& < \fnc_\beta(x_*) - \fnc(y) - (\beta - \alpha) d(x_*, y) +\min\{(\beta - \alpha ) \delta, \varepsilon\} \\
& \leq \fnc_\beta(x_*) - \fnc_\beta(y) - (\beta - \alpha) \delta + (\beta - \alpha) \delta = \fnc_\beta(x_*) - \fnc_\beta(y),
\end{align*}
where the first inequality is by the choice of $y$ and the second inequality is because $\fnc_\beta(y) \leq \fnc(y)$, $\delta \leq d(x_*, y)$, and $\min\{ (\beta - \alpha) \delta, \varepsilon\} \leq (\beta - \alpha) \delta$.
This means that $x_*$ is not an $\alpha$-critical point of $\fnc_\beta$, which is a contradiction.

We have established that $\fnc(x_*) = \fnc_\beta(x_*)$.  We now use this to show that $x_*$ is an $\alpha$-critical point of $\fnc$.  Assume for a contradiction that this is not the case.  Then there is a $y \in \spc X$ such that $\alpha d(x_*, y) \leq \fnc(x_*) - \fnc(y)$ but $y \neq x_*$.  Then,
\begin{align*}
\alpha d(x_*, y) \leq \fnc(x_*) - \fnc(y) \leq \fnc_\beta(x_*) - \fnc_\beta(y)
\end{align*}
because $\fnc_\beta(x_*) = \fnc(x_*)$ and $\fnc_\beta(y) \leq \fnc(y)$.  This contradicts that $x_*$ is an $\alpha$-critical point of $\fnc_\beta$.  Thus $x_*$ is indeed an $\alpha$-critical point of $\fnc$.
\end{proof}

\begin{Theorem}\label{thm-CritPtsExist}{\ }
\begin{enumerate}[label=(\roman*)]

\item\label{CritPtsExistOne} \textup{(}$\aca$\textup{)} The $\fvp$ holds for arbitrary $\spc X$ and any honestly-coded potential $f$.

\item\label{CritPtsExistTwo} \textup{(}$\aca$\textup{)} The $\fvp$ holds for compact $\spc X$ and any potential $f$; in fact, such an $f$ attains its minimum.

\item\label{CritPtsExistThree} \textup{(}$\pica$\textup{)} The $\fvp$ holds for arbitrary $\spc X$ and any potential $f$.

\end{enumerate}
\end{Theorem}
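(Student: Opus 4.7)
The plan is to reduce all three parts to the continuous case (Theorem~\ref{thm-ContinuousCritPt}) via the envelope machinery just developed: honest coding is precisely the hypothesis that lets one construct a continuous envelope $\fnc_\beta$ in $\aca$ (Lemma~\ref{lem-EnvelopeExists}), and critical points of $\fnc_\beta$ descend to critical points of $\fnc$ whenever $\alpha < \beta$ (Lemma~\ref{lem-EnvelopeCritPt}). So parts \ref{CritPtsExistOne}--\ref{CritPtsExistThree} really ask us to orchestrate three ingredients in sequence: obtain an honest code for $\fnc$ (trivial in \ref{CritPtsExistOne}, via Lemma~\ref{lem-HonestCodes}\ref{itHonestTwo} in \ref{CritPtsExistTwo}, via Lemma~\ref{lem-HonestCodes}\ref{itHonestFour} in \ref{CritPtsExistThree}), pass to a continuous envelope, and then apply either Theorem~\ref{thm-ContinuousCritPt} or Proposition~\ref{propWKLtoFVP} to the envelope.

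For part~\ref{CritPtsExistOne}, fix $\varepsilon > 0$. By Lemma~\ref{lemEpsRest}.\ref{itEpsUnbounded} it suffices to produce a $1$-critical point when $\varepsilon = 1$. Working in $\aca$, I would apply Lemma~\ref{lem-EnvelopeExists} with $\alpha = 2$ to the honestly-coded potential $\fnc$ to obtain a uniformly continuous $\fnc_2\colon \spc X \to \Rb_{\geq 0}$, then invoke Theorem~\ref{thm-ContinuousCritPt} to produce a $1$-critical point $x_\ast$ of $\fnc_2$. Lemma~\ref{lem-EnvelopeCritPt}(1), applied with $\alpha = 1 < 2 = \beta$, then yields that $x_\ast$ is also a $1$-critical point of $\fnc$ itself.

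For part~\ref{CritPtsExistTwo}, I would work in $\aca$ with $\spc X$ compact and $\fnc$ an arbitrary potential. Lemma~\ref{lem-HonestCodes}\ref{itHonestTwo} supplies an honest code for $\fnc$ (this is exactly where the compactness hypothesis is consumed), so Lemma~\ref{lem-EnvelopeExists} again produces a continuous envelope, say $\fnc_1$. Because $\spc X$ is compact and $\fnc_1$ is continuous, Proposition~\ref{propWKLtoFVP} (which only requires $\wkl$, hence holds in $\aca$) supplies a point $x_\ast \in \spc X$ at which $\fnc_1$ attains its minimum. Lemma~\ref{lem-EnvelopeCritPt}(2) then transfers minimum attainment from $\fnc_1$ back to $\fnc$, and Lemma~\ref{lemmCritApproxMin} then upgrades minimality to $\varepsilon$-criticality for every $\varepsilon > 0$. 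Part~\ref{CritPtsExistThree} is immediate: in $\pica$, Lemma~\ref{lem-HonestCodes}\ref{itHonestFour} furnishes an honest code for every lower semi-continuous potential on an arbitrary space, so part~\ref{CritPtsExistOne} (whose proof lives inside $\aca \subseteq \pica$) applies verbatim.

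Most of the work has already been invested in the earlier sections, so the argument is chiefly an orchestration of existing lemmas rather than a new construction. The point that demands attention is the strict inequality $\alpha < \beta$ in Lemma~\ref{lem-EnvelopeCritPt}, which forces us to work with $\fnc_2$ rather than $\fnc_1$ in part~\ref{CritPtsExistOne} and is what ultimately prevents one from collapsing the three parts into a single appeal to Proposition~\ref{propWKLtoFVP}: outside the compact setting we must pay for the descent from $\fnc_\beta$ to $\fnc$ by tolerating the small loss $\beta - \alpha$, and honest coding is precisely the feature that lets $\aca$ pay this cost uniformly by computing the infima defining $\fnc_\beta(x)$.
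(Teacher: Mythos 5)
Your proposal is correct and follows essentially the same route as the paper: honest coding (trivial, via Lemma~\ref{lem-HonestCodes}\ref{itHonestTwo}, or via Lemma~\ref{lem-HonestCodes}\ref{itHonestFour}), then a continuous envelope from Lemma~\ref{lem-EnvelopeExists}, then Theorem~\ref{thm-ContinuousCritPt} (or Proposition~\ref{propWKLtoFVP} in the compact case) and descent via Lemma~\ref{lem-EnvelopeCritPt}. The only differences are cosmetic --- the paper uses $\fnc_2$ throughout where you use $\fnc_1$ in part~(ii), which is equally valid, and it explicitly remarks that an honest code remains honest after the rescaling to $\varepsilon=1$, a point worth adding to your reduction.
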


\begin{proof}
In view of Lemma \ref{lemmCritApproxMin}, we may assume that $\varepsilon = 1$; note that an honest code remains honest after scaling $f$.
\medskip

\noindent \ref{CritPtsExistOne} As $\fnc$ is assumed to be honestly-coded, the envelope $\fnc_2$ exists and is continuous by Lemma~\ref{lem-EnvelopeExists}.  The function $\fnc_2$ has a critical point by Theorem~\ref{thm-ContinuousCritPt}, and this critical point is also a critical point of $\fnc$ by Lemma~\ref{lem-EnvelopeCritPt}.
\medskip

\noindent \ref{CritPtsExistTwo} As $\spc X$ is compact, $\fnc$ can be honestly coded by Lemma~\ref{lem-HonestCodes} item~(ii).  Thus $\fnc$ has a critical point by item~\ref{CritPtsExistOne} of this theorem. In fact, $\fnc_2$ is defined and attains its minimum by Proposition \ref{propWKLtoFVP}, so that $\fnc$ also attains its minimum.
\medskip

\noindent \ref{CritPtsExistThree} Working in $\pica$, we can assume that $\fnc$ is honestly-coded by Lemma~\ref{lem-HonestCodes} item~\ref{itHonestFour}.  Thus $\fnc$ has a critical point by item~\ref{CritPtsExistOne}.
\end{proof}

Below we will show that the points in Theorem \ref{thm-CritPtsExist} are optimal.

\section{Reversals of the variational principle}\label{secReversals}

In this section we show that many of the results of \S\ref{SecFormalEVP} and \S\ref{secCPAP} reverse. Let us begin with the weakest version of the $\fvp$ we have considered. Recall from Proposition~\ref{propWKLtoFVP} that $\wkl$ suffices to prove that every continuous potential over a compact space has a critical point.  We now show that $\wkl$ is also necessary to prove this version of the $\fvp$.

\begin{Proposition}\label{propFVPtoWKL}
The $\fvp$ for continuous, bounded $f$ on the Cantor space or on $[0,1]$ implies $\wkl$ over $\rca$.
\end{Proposition}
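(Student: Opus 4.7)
The plan is to prove the contrapositive: assuming $\neg\wkl$ in $\rca$, I will construct a continuous bounded potential on the Cantor space $2^\Nb$ (and, analogously, on $[0,1]$) that has no $1$-critical point, thereby refuting the $\fvp$ restricted to continuous bounded potentials. By Lemma~\ref{lemEpsRest}, refuting the $1$-$\fvp$ in this setting suffices to refute the $\fvp$.

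From $\neg\wkl$ I fix an infinite binary tree $T \subseteq \{0,1\}^{<\Nb}$ with no infinite path, and form the decreasing family of nonempty clopen sets $C_n = \{x \in 2^\Nb : x \rest n \in T\}$, noting $\bigcap_n C_n = \varnothing$. I would set
\[
f(x) \;=\; \sum_{n \in \Nb} 2^{-n}\bl 1 - \phi_n(x)\br,
\]
where $\phi_n$ is a continuous approximation to $\chi_{C_n}$, for instance $\phi_n(x) = \max(0,\, 1 - 2^n d(x, C_n))$, which equals $1$ on $C_n$, vanishes outside the $2^{-n}$-neighborhood of $C_n$, and is linear in between. This is a uniformly convergent series of continuous functions, so $f$ is a continuous bounded potential already in $\rca$, and $\bigcap_n C_n = \varnothing$ forces $f > 0$ everywhere while $\inf f = 0$ remains unattained. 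I would then verify that for every $x \in 2^\Nb$ there is some $y \neq x$, chosen inside a suitably deep $C_M$, with $f(y) + d(x,y) \leq f(x)$, showing that no point is $1$-critical. The $[0,1]$-version is obtained via the binary expansion, using the analogous closed sets $C_n^{[0,1]} = \bigcup_{\sigma \in T \cap \{0,1\}^n}[0.\sigma,\, 0.\sigma + 2^{-n}]$, with standard care at dyadic rationals.

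The main obstacle I anticipate is verifying the descent inequality $f(y) + d(x,y) \leq f(x)$ strictly at every $x \in 2^\Nb$, particularly at points already deep in the $T$-structure. The weights $2^{-n}$ and the widths $2^{-n}$ of the bump functions $\phi_n$ must be calibrated so that redirecting $x$ into a deeper $C_M$ produces an $f$-decrease strictly exceeding the metric cost $d(x,y)$. This relies crucially on $T$ being infinite (so arbitrarily deep $C_M$'s are always present near $x$) and on $T$ having no infinite path (so the descent cannot stabilize at any $x$). On the Cantor space the ultrametric rigidly discretizes distances, so the naive choice $\sum_n 2^{-n}\chi_{C_n^c}$ would make every point automatically $1$-critical; the continuous ``bump smoothing'' through $\phi_n$ is what produces a genuine super-$1$-Lipschitz descent direction at every point, converting the absence of a path through $T$ into the absence of a $1$-critical point of $f$.
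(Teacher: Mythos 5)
There is a genuine gap, and it sits exactly where you predicted: the calibration of the descent inequality. Worse, your proposed fix is vacuous on the Cantor space. With the metric $d(x,y)=\max(\{0\}\cup\{2^{-n}:x(n)\neq y(n)\})$, the set $C_n=\{x: x\rest n\in T\}$ is clopen and determined by the first $n$ bits, and its $2^{-n}$-neighbourhood is $C_n$ itself (if $d(x,y)\le 2^{-n}$ then $x\rest n=y\rest n$). Hence $\phi_n(x)=\max(0,1-2^{n}d(x,C_n))$ is \emph{exactly} $\chi_{C_n}(x)$, and your $f$ collapses to the naive sum $\sum\{2^{-n}: x\rest n\notin T\}$ that you yourself reject. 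And that function really does make every point $1$-critical: if $x\neq y$ first disagree at position $k$, then $\chi_{C_n}(x)=\chi_{C_n}(y)$ for $n\le k$, so $f(x)-f(y)\le\sum_{n>k}2^{-n}=2^{-k}=d(x,y)$, with equality forcing $y\rest n\in T$ for all $n>k$, i.e.\ $y$ a path through $T$ --- which does not exist. So the witness inequality $d(x,y)\le f(x)-f(y)$ with $y\neq x$ is never satisfiable, and the construction proves nothing.

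There is also a second obstruction that no reweighting of your $f$ can fix: any potential depending only on the least $n$ with $x\rest n\notin T$ (the ``exit level'') fails at points extending deep nodes of $T$. If $x$ extends a node of depth $m$ and $y$ extends a strictly deeper node, the drop $f(x)-f(y)$ is controlled by the weights at levels $>m$, but the distance $d(x,y)$ is $2^{-(i_0+1)}$ where $i_0$ is the position at which $y$ branches off from $x$, and $i_0$ may be far smaller than $m$. The paper's proof handles both problems at once: it stretches the tree (replacing $\sigma$ by $\tilde\sigma$ with interleaved zeros) so that values live at scale $2^{-2i}$ while the relevant distances live at scale $2^{-2i-1}$, creating the factor-of-two slack needed for a non-strict descent inequality; and it makes $f$ on the cylinder above a leaf $\sigma$ depend on the set $A_\sigma$ recording \emph{which positions along $\sigma$ still admit longer alternatives in $T$}, so that branching off at the last live position $i_0$ gains at least $2^{-2i_0}-\sum_{i>i_0}2^{-2i}\ge 2^{-2i_0-1}\ge d(x,y)$. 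That bookkeeping is the actual content of the reversal, and your sketch does not contain a substitute for it. (Your plan for $[0,1]$ inherits the same problems.)
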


\begin{proof} 
We work in $\rca$ and prove the contrapositive. If $\wkl$ fails, then there is an infinite binary tree $T\subseteq 2^{<\Nb}$ that has no infinite path.
Let $T^{\circ}$ be the set of leaves of $T$: $T^{\circ}=\{\sigma\in T: \sigma^\smf 0, \sigma^\smf 1\notin T\}$.
Since $T$ is infinite and has no infinite path, $T^{\circ}$ is also infinite.
For each $\sigma\in T^{\circ}$, define
\begin{align*}
A_{\sigma}=\{i<|\sigma|-1: \neg(\exists \tau \in T)(|\tau|=|\sigma|+1 \text{ and } \tau\sqsupseteq (\sigma\rest i)^\smf(1-\sigma(i+1))\}.
\end{align*}
For each $\sigma\in T$, define $\tilde \sigma\in 2^{<\Nb}$ with $|\tilde\sigma|=2|\sigma|$ as $\tilde\sigma(2i)=0$ and $\tilde\sigma(2i+1)=\sigma(i)$ if $i<|\sigma|$.
Put $\tilde T=\{\tilde \sigma:\sigma\in T\}$, $\tilde{T}^{\circ}=\{\tilde\sigma: \sigma\in T^{\circ}\}$, and put
\begin{align*}
S=\{\tau\in 2^{<\Nb}: (\forall \sigma \in T)(\tau\not\sqsubseteq \tilde\sigma) \text{ and } (\exists \sigma \in T)(\tau\rest(|\tau|-1)\sqsubset \tilde\sigma)\}.
\end{align*}
Here, $S$ is the set of all binary strings $\tau$ which move away from $\tilde T$ before reaching a member of $\tilde T^{\circ}$.  $S$ can be defined in $\rca$ because each $\tau$ need only be checked against strings $\tilde\sigma$ of length at most $|\tau|$.  The elements of $\tilde{T}^{\circ}\cup S$ are pairwise incomparable, and for every $x\in 2^{\Nb}$ there is a $\sigma\in \tilde{T}^{\circ}\cup S$ such that $x\sqsupseteq \sigma$.  Indeed, each $x$ must either reach a leaf of $\tilde T$ or move away from $\tilde T$ before that because $\tilde T$ has no infinite path.

Now, define a continuous function $f:2^{\Nb}\to [0,3]$ as follows:
\begin{align*}
 f(x)=
\begin{cases}
 2-\sum_{i\in A_{\sigma}}2^{-2i} & \mbox{if $x\sqsupseteq \tilde \sigma$ for some $\sigma\in T^{\circ}$},\\
 3 & \mbox{if $x\sqsupseteq \tau$ for some $\tau\in S$}.
\end{cases}
\end{align*}
One may easily obtain a code for $f$ by Lemma~\ref{lem-code-for-conti1} and Lemma~\ref{lem-code-for-conti2}.
We show that $f$ has no critical point, which gives the desired contradiction.
If $x\sqsupseteq \tau$ for some $\tau\in S$, then take any $\sigma\in T^{\circ}$ and $y\sqsupseteq\tilde\sigma$, and observe that
$f(x)-f(y)\ge 3-2\ge d(x,y)$.  Thus $x$ is not a critical point.
Assume instead that $x\sqsupseteq \tilde \sigma$ for some $\sigma\in T^{\circ}$.
Let $i_{0}$ be the greatest $i<|\sigma|-1$ such that $i\notin A_{\sigma}$, which exists because $T$ is infinite.  Then there is a $\sigma'\in T^{\circ}$ such that $\sigma'\sqsupseteq (\sigma\rest i_{0})^\smf(1-\sigma(i_{0}+1))$ and $|\sigma'|>|\sigma|$.
By the maximality of $i_{0}$, any $\tau\in T$ which extends $\sigma\rest (i_{0}+1)$ is shorter than $\sigma'$.
Thus, we have that $i_{0}\in A_{\sigma'}$ and that $j\in A_{\sigma}$ implies that $j\in A_{\sigma'}$ for every $j<i_{0}$ since $\sigma\rest i_{0}=\sigma'\rest i_{0}$.  Take $y\sqsupseteq \tilde\sigma'$.
Then $d(x,y)\le 2^{-2i_{0}-1}$, and therefore
\[f(x)-f(y)=-\sum_{i\in A_{\sigma}}2^{-2i}+\sum_{i\in A_{\sigma'}}2^{-2i}\ge 2^{-2i_{0}}-\sum_{i\in A_{\sigma},i>i_{0}}2^{-2i}\ge 2^{-2i_{0}-1}\ge d(x,y).\]
Thus $x$ is not a critical point.

We can simulate the above construction on the unit interval in order to obtain a continuous function on $[0,1]$ that has no critical point.
For a given $\sigma\in 2^{<\Nb}$, let $l_{\sigma}=0.\sigma=\sum_{i<|\sigma|}2^{-\sigma(i)}, r_{\sigma}=l_{\sigma}+2^{-|\sigma|}$, and $I_{\sigma}=[l_{\sigma},r_{\sigma}]\subseteq [0,1]$.
Now, for each $\sigma\in T^{\circ}$, let $g_{\tilde\sigma}:I_{\tilde\sigma}\to [0,3]$ be a piecewise linear function such that $g_{\tilde\sigma}(l_{\tilde\sigma})=g_{\tilde\sigma}(r_{\tilde\sigma})=3$ and $g_{\tilde\sigma}((l_{\tilde\sigma}+r_{\tilde\sigma})/2)=2-\sum_{i\in A_{\sigma}}2^{-2i}$,
and, for each $\tau \in S$, define $g_{\tau}:I_{\tau}\to [0,3]$ as $g_\tau(x)=3$.
Put $g=\bigcup_{\tau\in \tilde T^{\circ}\cup S}g_{\tau}$.
Then, $g$ is a continuous function on $[0,1]$ by Lemma~\ref{lem-code-for-conti1} and Lemma~\ref{lem-code-for-conti2}: for this, observe that for each $\tau\in \tilde T^{\circ}\cup S$, $g_{\tau}$ can be extended to an open subset of $[0,1]$, and thus $g$ can be decomposed into piecewise linear functions on open subsets of $[0,1]$.  Indeed, if $l_{\tau},r_{\tau}\notin \{0,1\}$, then one can effectively find $\sigma_{0},\sigma_{1}\in \tilde T^{\circ}\cup S$ such that $r_{\sigma_{0}}=l_{\tau}$ and $r_{\tau}=l_{\sigma_{1}}$.  Then $g$ is still piecewise linear on an interval $(l_{\sigma_{0}},r_{\sigma_{1}})$.
One can check that $g$ has no critical point, as we have seen above.
If $x\in I_{\tau}$ for some $\tau\in S$, then take any $\sigma\in T^{\circ}$, and observe that $y=(l_{\tilde\sigma}+r_{\tilde\sigma})/2$ witnesses that $x$ is not a critical point.
If $x\in I_{\tilde \sigma}$ for some $\sigma\in T^{\circ}$ then take $\sigma'\in T^{\circ}$ as in the Cantor space case.  Then $y=(l_{\tilde\sigma'}+r_{\tilde\sigma'})/2$ witnesses that $x$ is not a critical point.
\end{proof}

Next we consider versions of the variational principle provable in $\aca$. Recall from Theorem~\ref{thm-ContinuousCritPt} and Theorem~\ref{thm-CritPtsExist} that $\aca$ is able to prove the $\fvp$ when either $f$ is continuous or $\spc X$ is compact.  Let us see that each of these cases implies $\aca$, beginning with the former.

\begin{Proposition}\label{prop-FVPContinuousReversal}
The $\fvp$ for continuous, bounded $\fnc$ on the Baire space implies $\aca$ over $\rca$.
\end{Proposition}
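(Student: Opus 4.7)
My plan is to use the standard equivalence over $\rca$ between $\aca$ and the statement ``the range of every injection $g \colon \Nb \to \Nb$ exists as a set'' (see \cite[Section~III.1]{SimpsonSOSOA}). Given an arbitrary injection $g$, I will construct a continuous, bounded $f\colon \Nb^\Nb \to [0,2]$ so that any critical point $x_*$ of $f$ immediately computes $\mathrm{ran}(g)$ via a $\Delta^0_0$ definition.

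Concretely, let $\phi_n(x) = 1$ if there is some $s \leq x(n)$ with $g(s) = n$, and $\phi_n(x) = 0$ otherwise, and set
\[ f(x) = 2 - \sum_{n=0}^{\infty} 2^{-n}\phi_n(x). \]
The $n$-th term depends only on $x(n)$ and on the finite data $g(0), \dots, g(x(n))$, so $f$ is effectively computable on the dense set of eventually-zero sequences, and the tail estimate $\sum_{n \geq N} 2^{-n} = 2^{-N+1}$ gives the uniform modulus of continuity $h(n) = n+1$ (if $x, y$ agree on the first $N$ coordinates then $|f(x) - f(y)| \leq 2^{-N+1}$). By Lemma~\ref{lem-code-for-conti1}, these data yield a code for $f$ as a continuous function in $\rca$, and clearly $0 \leq f \leq 2$.

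Applying the $\fvp$ (in the $\varepsilon = 1$ formulation, justified by Lemma~\ref{lemEpsRest}) I obtain a critical point $x_* \in \Nb^\Nb$. I then claim that
\[ \mathrm{ran}(g) = \{ n \in \Nb : \exists s \leq x_*(n) \, (g(s) = n) \}, \]
which, being $\Delta^0_0$ in the parameters $x_*$ and $g$, exists as a set in $\rca$. The inclusion $\supseteq$ is immediate. For $\subseteq$, suppose $n \in \mathrm{ran}(g)$ and let $s_n$ be the unique $s$ with $g(s_n) = n$; if $s_n > x_*(n)$, define $y$ by $y(k) = x_*(k)$ for $k \neq n$ and $y(n) = s_n$. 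Then $y \neq x_*$, $d(x_*, y) = 2^{-n}$, and the only term that changes is the $n$-th one, flipping from $0$ to $1$, so $f(y) = f(x_*) - 2^{-n}$. Therefore $d(x_*, y) \leq f(x_*) - f(y)$, contradicting the $1$-criticality of $x_*$.

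The delicate point is the precise calibration of weights: a single-coordinate perturbation at position $n$ has Baire distance exactly $2^{-n}$, and the weight $2^{-n}$ is chosen so that the induced drop in $f$ matches this distance exactly, making the non-strict inequality $d(x_*, y) \leq f(x_*) - f(y)$ hold with equality and hence rule out criticality. Halving the weights to $2^{-n-1}$ would leave a factor-$\nicefrac 12$ gap and spoil the argument. The remaining work -- coding $f$ in $\rca$ through Lemma~\ref{lem-code-for-conti1} and carrying out the $\Delta^0_0$ comprehension step -- is routine.
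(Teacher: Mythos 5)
Your proof is correct, and it follows the same overall strategy as the paper's: reduce $\aca$ to the existence of ranges of injections, build a continuous bounded potential on the Baire space, and show that perturbing a single coordinate of a critical point would make $f$ drop by at least the Baire distance unless that coordinate already bounds the relevant witness. The difference is in the gadget. The paper treats the value $x(2^{n+1})$ as a code for a finite set $D$ and uses the count $v_n(D)$ of elements mapped below $n$, with weights $2^{-2n-1+v_n(D)}$; the offset to position $2^{n+1}$ buys a large amount of slack ($d(x_*,y)=2^{-2^{n+1}}\ll 2^{-2n-1}$), and the range is then extracted from the critical point by a further bounded search. Your version uses coordinate $n$ directly as a bound on the (unique) preimage of $n$, with weight $2^{-n}$ chosen to match the distance of a one-coordinate perturbation exactly, so the range is read off from $x_*$ by a single $\Delta^0_0$ formula. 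This is simpler, and your calibration remark is accurate: Definition~\ref{DefCritical} uses the non-strict inequality $\varepsilon d(x_*,y)\leq f(x_*)-f(y)$, so equality suffices to refute criticality. (If you wanted the argument to be robust against a strict-inequality convention, doubling the weights to $2^{-n+1}$ would restore slack at no cost, which is essentially the cushion the paper builds in.) The verification that $f$ is codable via Lemma~\ref{lem-code-for-conti1} -- computable rational values on the eventually-zero sequences and the modulus $h(n)=n+1$ -- checks out.
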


\begin{proof}
As is well-known, $\aca$ is equivalent to the statement ``for every injection $h \colon \Nb \imp \Nb$, the range of $h$ exists as a set.'' (see~\cite[Lemma~III.1.3]{SimpsonSOSOA}).

Let $h \colon \Nb \imp \Nb$ be an injection.  For the purposes of this proof, we view the natural numbers as coding the finite sets, with $0$ coding $\varnothing$.  For each $n \in \Nb$ and finite set $D$, let $v_n(D)$ denote the number of $a \in D$ with $h(a) < n$: $v_n(D) = |\{a \in D : h(a) < n\}|$.  The fact that $h$ is an injection implies that $v_n(D) \leq n$.

Define a continuous function $g \colon \Nb^{\Nb} \imp \Rb_{\geq 0}$ by
\begin{align*}
g(x) = \sum_{n \in \Nb} 2^{-2n - 1 + v_n(x(2^{n+1}))} \leq 1.
\end{align*}
We may use Lemma~\ref{lem-code-for-conti1} to show that $g$ can indeed be coded as a continuous function in $\rca$, as it is straightforward to produce the sequence $(\la \sigma^\smf 0^\Nb, g(\sigma^\smf 0^\Nb) \ra : \sigma \in \Nb^{<\Nb})$ and, for all $n \in \Nb$ and $\sigma, \tau \in \Nb^{<\Nb}$, to check that
\begin{align*}
d(\sigma^\smf 0^\Nb, \tau^\smf 0^\Nb) < 2^{-2^{n+1}} \Imp |g(\sigma^\smf 0^\Nb) - g(\tau^\smf 0^\Nb)| < 2^{-n}.
\end{align*}
To see the above implication, observe that if $d(x,y) < 2^{-2^{n+1}}$, then $x \rest (2^{n+1} +1) = y \rest (2^{n+1}+1)$.  This means that the terms $2^{-2k - 1 + v_k(x(2^{k+1}))}$ and $2^{-2k - 1 + v_k(y(2^{k+1}))}$ agree for $k \leq n$ and therefore that $|g(x) - g(y)| < \sum_{k = n+1}^\infty 2^{-k-1} = 2^{-n-1} < 2^{-n}$.

We see that $g(x) \leq 1$ for all $x \in \Nb^{\Nb}$, thus the function $f \colon \Nb^{\Nb} \imp \Rb_{\geq 0}$ given by $f(x) = 1-g(x)$ is a continuous potential on the Baire space that is bounded by $1$.

By the $\fvp$ on the Baire space, let $x_*$ be a critical point of $f$.  For each $n$, let $D_n$ denote the set coded by $x_*(2^{n+1})$.  We claim that $D_n$ must contain every $a$ for which $h(a) < n$.  Suppose not, and let $a$ be such that $h(a) < n$ but $a \notin D_n$.  Let $y \in \Nb^\Nb$ be such that $y(2^{n+1})$ is a code for $D_n \cup \{a\}$ and $y(m) = x_*(m)$ for all $m \neq 2^{n+1}$.  Then
\begin{align*}
d(x_*, y) = 2^{-2^{n+1}} \leq 2^{-2n - 1} \leq 2^{-2n -1 + v_n(D_n)} = f(x_*) - f(y),
\end{align*}
contradicting that $x_*$ is a critical point.  Thus $D_n$ contains every $a$ for which $h(a) < n$.  We may then extract the range of $h$ from $x_*$ by taking $\ran(h) = \{n : (\exists a \in D_{n+1})(h(a) = n)\}$.
\end{proof}

\begin{Proposition}\label{PropCompactFVP}
The $\fvp$ for honestly-coded $\fnc$ on $[0,1]$ implies $\aca$ over $\rca$.
\end{Proposition}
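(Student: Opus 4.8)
I would prove the contrapositive inside $\rca$: assuming $\aca$ fails, I would produce an honestly-coded potential on $[0,1]$ that has no $1$-critical point in its support, which contradicts the $\fvp$ for honestly-coded potentials on $[0,1]$ (that principle asserts a critical point for \emph{every} $\varepsilon>0$, so it suffices to rule it out for $\varepsilon=1$). By the equivalence of $\aca$ with ``every increasing bounded sequence of rationals converges'' (Theorem~\ref{thm-SeqCompact}), the failure of $\aca$ hands us a strictly increasing sequence $(p_k)_{k\in\Nb}$ of rationals, which we may take to lie in $(3/4,1)$, having no supremum in our universe.

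The candidate is $f\colon[0,1]\to\Rb_{\geq0}$ defined by $f(p_k)=1-p_k$ for each $k$ and $f(x)=1$ for all other $x$; geometrically $f$ is identically $1$ except for downward spikes at the $p_k$ of depth $1-p_k$, shrinking towards the non-existent value $1-\sup_k p_k$. The point of this choice is the identity $d(p_k,p_{k+1})=p_{k+1}-p_k=(1-p_k)-(1-p_{k+1})=f(p_k)-f(p_{k+1})$, which gives $p_{k+1}\peq_1 p_k$ and hence shows no $p_k$ is $1$-critical; note this needs no modulus of convergence for $(p_k)$, only that it is increasing. Moreover, since $p_0>3/4$, for every $x\in[0,1]$ we have $d(x,p_0)=|x-p_0|\leq p_0=f(x)-f(p_0)$ whenever $x\notin\{p_k:k\in\Nb\}$, so $p_0\peq_1 x$ and such an $x$ is not $1$-critical either. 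Thus $\lscsupp(f)=[0,1]\neq\varnothing$ while $f$ has no $1$-critical point.

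The real work is checking that $f$ is a genuine honestly-coded potential in the weak theory. Totality and lower semi-continuity are comparatively easy: the only possible accumulation point of $\{p_k\}$ is $\sup_k p_k$, which does not exist, so every point of $[0,1]$ is either one of the $p_k$ or has a rational neighbourhood disjoint from $\{p_k\}$, and accordingly the epigraph $\bigcup_k\{p_k\}\times[1-p_k,\infty)$ acquires no new limit points and is closed. The delicate step—which I expect to be the crux of the proof—is producing an honest code for $f$ that is available in $\rca$: the obvious candidate $\{\langle 0,a,r,q\rangle:(\forall k)(p_k\in\ball{a}{r}\to 1-p_k\geq q)\}$ is only $\Pi^0_1$ in $(p_k)$, since given a rational ball one cannot in general decide whether it contains a tail of $(p_k)$ (this is essentially deciding whether a rational is an upper bound of $(p_k)$, which is not $\Delta^0_1$ when $\sup_k p_k$ fails to exist). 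One must therefore either argue that the relevant assertions collapse to a $\Delta^0_1$ form under the standing assumption $\neg\aca$, or—more plausibly—refine the construction (choosing the spike heights and, if needed, the sequence $(p_k)$ more carefully, perhaps starting from an injection whose range fails to exist as in Proposition~\ref{prop-FVPContinuousReversal}) so that the honest code becomes $\Delta^0_1$ while the defeating identity $d(p_k,p_{k+1})=f(p_k)-f(p_{k+1})$ is preserved. Once such an $f$ is secured, applying the $\fvp$ for honestly-coded potentials on $[0,1]$ yields a $1$-critical point in $\lscsupp(f)$—a contradiction—so $\aca$ holds.
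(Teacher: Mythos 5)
Your overall strategy (contrapositive, extract from $\neg\aca$ a strictly increasing bounded sequence of rationals with no supremum, and build a potential whose critical point would have to be that supremum) is the same as the paper's, and your verification that no point of $[0,1]$ is $1$-critical for your spike function $f$ is correct. But the step you yourself flag as ``the crux'' is a genuine gap, and for the particular $f$ you chose it is not merely difficult but \emph{impossible} to close: your $f$ provably has no honest code in a model where $\aca$ fails. Indeed, suppose $\Phi$ were an honest code and let $B$ be a single rational ball covering all of $[0,1]$. Honesty gives, for every $q\in\Qb$, that $B\vcode\Phi q$ holds iff $(\forall x\in[0,1])(f(x)\geq q)$ iff $(\forall k)(q\leq 1-p_k)$; the left side is $\Sigma^0_1$ in $\Phi$ and the right side is $\Pi^0_1$ in $(p_k)$, so by $\Delta^0_1$-$\compax$ the set $\{q\in\Qb:(\forall k)(q\leq 1-p_k)\}$ exists, i.e.\ the set of rational upper bounds of $(p_k)$ exists. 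A bisection search through that set (starting from $3/4\notin$ and $1\in$) then produces $\sup_k p_k$ as a real number in $\rca$, contradicting the choice of $(p_k)$. So neither of your fallback options can work for this $f$: the relevant $\Pi^0_1$ assertions do not collapse to $\Delta^0_1$, and no choice of spike heights preserving the identity $f(p_k)-f(p_{k+1})=d(p_k,p_{k+1})$ with $f$ locally constant off the spikes will avoid the same obstruction, since the honest code on the full ball always encodes the set of upper bounds of the tail.

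What the paper does instead is orient the function the other way: it sets $\fnc(x)=2$ when $x<c_n$ for some $n$ and $\fnc(x)=x$ when $x$ is an upper bound, so that a critical point is forced to be the supremum directly (any $x$ below some $c_n$ is defeated by the point $1$, and any non-least upper bound is defeated by a smaller upper bound). The payoff of this choice is precisely at the coding step: the honest condition ``$\fnc\geq q$ on $(u,v)$'' becomes equivalent to the $\Sigma^0_1$ disjunction ``$q\leq u$, or $q\leq 2$ and $\exists n(v<c_n)$, or $\exists n(q\leq c_n<v)$,'' and a $\Sigma^0_1$ relation is exactly what a code (which is read through an existential quantifier) can realize in $\rca$. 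The honesty verification there uses the nonexistence of the supremum to rule out the one problematic case. So the lesson is that the function must be engineered so that honesty is $\Sigma^0_1$-expressible; your construction needs to be replaced, not just patched.
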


\begin{proof}
We work in $\rca$ and prove the contrapositive. By Theorem \ref{thm-SeqCompact}, if $\aca$ fails, then there is a strictly increasing sequence $\vec c = (c_n : n \in \Nb)$ of rationals in $[0,1]$ with no supremum.

To define $\fnc$, enumerate a code $\Phi$ by enumerating $(u,v)  \vcode \Phi q$ if $q \leq u$ or if $q \leq 2$ and there is an $n$ such that $v < c_n$.  One readily checks that $\Phi$ is a code for the potential $\fnc \colon [0,1] \imp \Rb_{\geq 0}$ given by
\begin{align*}
\fnc(x) =
\begin{cases}
2 & \text{if $\exists n(x < c_n)$}\\
x & \text{otherwise}.
\end{cases}
\end{align*}
To ensure that $\Phi$ is honest, additionally enumerate $(u,v) \vcode \Phi q$ whenever there is an $n$ such that $q \leq c_n < v$.  To see that the resulting code is honest, consider an open interval $(u,v)$, and suppose that $(\forall x \in (u,v))(\fnc(x) \geq q)$.  Note that $\fnc$ is bounded above by $2$, so $q \leq 2$.  If there is an $n$ such that $v < c_n$, then $(u,v) \vcode \Phi q$.  If $c_n < u$ for all $n$, then $\fnc(x) = x$ on $(u,v)$.  Thus $q \leq u$, so $(u,v) \vcode \Phi q$.  Finally, suppose that there is an $n$ such that $c_n \in (u,v)$, but there is no $n$ such that $v < c_n$.  As $v$ is not the supremum of $(c_n : n \in \Nb)$ (because we assumed that there is no such supremum), there is a $y \in (u,v)$ such that $\forall n(c_n < y)$.  Then $\fnc(y) = y$, which implies that $q \leq y < v$.  By a similar argument, it cannot be that $\forall n(c_n < q)$ because if this were true, then there would be a $z \in (u,v)$ with $z < q$ such that $\forall n(c_n < z)$.  As also $\fnc(z) = z$, this contradicts the assumption that $(\forall x \in (u,v))(\fnc(x) \geq q)$.  Thus it must be that $q \leq c_n < v$ for some $n$, which implies that $(u,v) \vcode \Phi q$.  Hence $\Phi$ is honest.

We claim that $\fnc $ has no critical point.  Assume towards a contradiction that $c_\ast$ is a critical point of $\fnc$.  We show that $c_\ast $ is the supremum of $ \vec c$. Indeed, it is readily checked that if $c_\ast < c_n$ for some $n$, then
\[\fnc (1) = 1 \leq 2 -d(c_\ast, 1) = \fnc(c_\ast) - d(c_\ast, 1) ,\]
and hence $c_\ast$ cannot be a critical point. It follows that $c_\ast \geq c_n$ for all $n$. Now, if $c' < c_\ast$ were also an upper bound of $\vec c$, then we would have $\fnc (c') = c'$, so that
\[\fnc (c') = c' = c_\ast - (c_\ast - c') = \fnc (c_\ast) - d(c', c_\ast),\]
and thus $c_\ast$ cannot be a critical point. Hence $c_\ast$ is the supremum of $\vec c$, contradicting our initial assumption.
\end{proof}

Finally, we show that the unrestricted $\fvp$ proves $\pica$ by appealing to the following characterization of $\pica$.

\begin{Lemma}[{\cite[Lemma~VI.1.1]{SimpsonSOSOA}}]
The following are equivalent over $\rca$:

\begin{enumerate}

\item $\pica$ 

\item for any sequence $(T_i)_{i \in \Nb}$ of subtrees of $\Nb^{<\Nb}$, there is a set $X$ such that for all $i\in \Nb$, $i \in X $ if and only if $T_i$ has an infinite path.

\end{enumerate}

\end{Lemma}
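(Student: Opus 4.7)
The plan is to prove the two directions separately, using the standard observation that having an infinite path is naturally $\Sigma^1_1$ and the Kleene normal form for $\Pi^1_1$ predicates.

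For (1) $\Rightarrow$ (2), given a sequence $(T_i)_{i \in \Nb}$ of trees, I would observe that the statement ``$T_i$ has no infinite path'' can be written in the $\Pi^1_1$ form
\[ \forall f\in \Nb^\Nb \,\exists n\, \big( f\rest n \notin T_i \big),\]
uniformly in $i$. Applying $\Pi^1_1\mbox{-}\compax$ yields the set $Y = \{i : T_i \text{ has no infinite path}\}$, and then $X = \Nb \setminus Y$ exists by $\Delta^0_1\mbox{-}\compax$.

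For (2) $\Rightarrow$ (1), I would proceed in two steps. First, I would show that (2) implies $\aca$, by appealing to the characterization of $\aca$ as ``the range of every injection exists'' (\cite[Lemma~III.1.3]{SimpsonSOSOA}). Given an injection $h \colon \Nb \imp \Nb$, define the sequence of trees $T_i = \{0^n : n \in \Nb \andd (\forall j < n)(h(j) \neq i)\} \cup \{\varnothing\}$, which exists in $\rca$. Then $T_i$ has an infinite path if and only if $i \notin \ran(h)$, so applying (2) yields $\Nb \setminus \ran(h)$ and hence $\ran(h)$.

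Second, working now in $\aca$, I would invoke Kleene's normal form theorem for $\Pi^1_1$ formulas: every $\Pi^1_1$ formula $\varphi(n)$ is provably equivalent (over $\aca$, or even $\rca$ with a little care) to a formula of the form ``$T_n$ is well-founded,'' where $(T_n)_{n \in \Nb}$ is an arithmetically defined sequence of subtrees of $\Nb^{<\Nb}$. This reduction is the heart of the standard normal-form argument: a universal second-order quantifier becomes a search for a ``counter-witness'' along a tree, with well-foundedness recording that no such witness exists. Then (2) applied to $(T_n)_{n \in \Nb}$ gives the set $\{n : T_n \text{ has an infinite path}\}$, whose complement is $\{n : \varphi(n)\}$; since $\varphi$ was an arbitrary $\Pi^1_1$ formula, this establishes $\Pi^1_1\mbox{-}\compax$.

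The main obstacle is the Kleene normal form reduction: one must carefully verify in $\aca$ that the passage from $\varphi(n) \equiv \forall X \psi(n,X)$ to ``$T_n$ is well-founded'' can be carried out as a single uniform construction, so that the resulting sequence $(T_n)$ exists as a set (and not merely class-wise). The routine direction (1) $\Rightarrow$ (2) and the implication (2) $\Rightarrow \aca$ are comparatively straightforward and follow the pattern above.
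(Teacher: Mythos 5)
The paper does not prove this lemma at all---it simply cites it as \cite[Lemma~VI.1.1]{SimpsonSOSOA}---and your argument is precisely the standard proof given there: the forward direction via the $\Sigma^1_1$-definability of ``$T_i$ has a path,'' and the reversal by first extracting $\aca$ from (2) via the range-of-an-injection characterization and then applying the normal form theorem for $\Pi^1_1$ formulas (Simpson's Lemma~V.1.4) inside $\aca$. Your proposal is correct and takes essentially the same route as the cited source.
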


\begin{Proposition}\label{prop:critical-vs-Pi11CA-RCA}
The $\fvp$ on the Baire space implies $\pica$ over $\rca$.
\end{Proposition}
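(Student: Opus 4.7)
My plan is to leverage the lemma just cited: given a sequence $(T_i)_{i \in \Nb}$ of subtrees of $\Nb^{<\Nb}$, I will construct an lsc potential $f$ on the Baire space whose $\fvp$-provided critical point $x_\ast$ reduces ``$T_i$ has an infinite path'' to a $\Pi^{0}_{1}$ predicate of $x_\ast$ whose complement is $\Sigma^{0}_{1}$.  The set $X = \{i : [T_i] \neq \varnothing\}$ will then be obtained from $\Sigma^{0}_{1}$-comprehension, which is available because the $\fvp$ on the Baire space (already when restricted to continuous potentials) implies $\aca$ by Proposition~\ref{prop-FVPContinuousReversal}.

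I would first fix a pairing $\la \cdot, \cdot \ra \colon \Nb^2 \to \Nb$ chosen so that $\la i, 0 \ra \geq i + 1$ for every $i$ (e.g., reindex the standard pairing $2^i(2n+1)-1$ by placing tree $T_i$ on column $i+2$), and write $x_i(n) = x(\la i, n \ra)$ for $x \in \Nb^\Nb$.  Then I would set
\[ f(x) \;=\; \sum_{i \in \Nb} 2^{-i-1} \, [x_i \notin [T_i]], \]
where the bracket is $1$ if $x_i$ is not an infinite path through $T_i$ and $0$ otherwise.  Each set $\{x : x_i \notin [T_i]\}$ is open (as the complement of a closed single-coordinate condition), so each summand is lsc and bounded, and with summable coefficients $f$ is a bounded lsc potential from $\Nb^\Nb$ to $[0,1]$.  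A concrete code $\Phi$ is produced in $\rca$ by enumerating $\ball a r \vcode \Phi q$ whenever one can exhibit, from the initial segment of $x$ that $\ball a r$ fixes, a finite $I$ and depths $(n_i)_{i \in I}$ with $x_i \rest n_i \notin T_i$ for each $i \in I$ and $\sum_{i \in I} 2^{-i-1} \geq q$; this is $\Sigma^{0}_{1}$ in the parameters, and the axioms of Definition~\ref{def-lsc} follow by inspection.

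Applying the $\fvp$ with $\varepsilon = 1$ yields a critical point $x_\ast \in \Nb^\Nb$.  The key claim is then that, for every $i$, $T_i$ has an infinite path if and only if $x_{\ast,i} \in [T_i]$.  The direction $(\Leftarrow)$ is immediate.  For $(\Rightarrow)$, suppose $T_i$ has a path $p$ and, for contradiction, $x_{\ast,i} \notin [T_i]$.  Let $y$ be obtained from $x_\ast$ by replacing its $i$-th column by $p$; then $y \neq x_\ast$ (since $x_{\ast,i} \neq p$), only the $i$-th summand of $f$ changes, so $f(x_\ast) - f(y) = 2^{-i-1}$, while $d(x_\ast, y) \leq 2^{-\la i, 0 \ra} \leq 2^{-i-1}$ by the choice of pairing.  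Hence $d(x_\ast, y) \leq f(x_\ast) - f(y)$, contradicting criticality.  From the claim, $X^c = \{i : \exists n\, (x_{\ast, i} \rest n \notin T_i)\}$ is $\Sigma^{0}_{1}$, so using the $\aca$ we already have, $X^c$ exists as a set, and $X = \Nb \setminus X^c$ is the set witnessing $\pica$ via the characterization above.

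The main technical obstacle is arranging that the cheap column-swap perturbation actually beats the critical-point slack: replacing the $i$-th column moves the distance by at most $2^{-\la i, 0 \ra}$, the position of the first entry of that column, so the weight $2^{-i-1}$ must dominate, which is precisely why the pairing must push $\la i, 0 \ra$ above $i$.  Once that balance is in place, the remaining verifications---that $\Phi$ meets the axioms of a lsc code, that $\lscsupp(f) = \Nb^\Nb$ is non-empty, and that $X^c$ is indeed a legitimate $\Sigma^{0}_{1}$ set definable from $x_\ast$ and $(T_i)$---are routine.
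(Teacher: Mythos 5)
Your proposal is correct and follows essentially the same route as the paper: first invoke Proposition~\ref{prop-FVPContinuousReversal} to secure $\aca$, then encode the sequence of trees into a bounded lsc potential $f(x)=\sum_i w_i\,[(x)_i\notin[T_i]]$ on the Baire space and use a single-column swap to show a critical point must place a path in every column whose tree has one; your only deviations are cosmetic (weights $2^{-i-1}$ with a reindexed pairing so that $\la i,0\ra\ge i+1$, versus the paper's $2^{-i}$ with the standard pairing, which already satisfies $\la i,0\ra\ge i$). The coding of $\Phi$ by finite witnessing sets of columns and the use of $\aca$ both for the totality of $f$ and for extracting $X$ from the $\Pi^0_1$ predicate $(x_*)_i\in[T_i]$ match the paper's argument.
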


\begin{proof}
By Proposition~\ref{prop-FVPContinuousReversal}, the $\fvp$ on the Baire space implies $\aca$ over $\rca$, so we may work over $\aca$.

Recall that we assume that the pairing function $\la \cdot, \cdot \ra \colon \Nb \times \Nb \imp \Nb$ is increasing in both coordinates.  For the purposes of this proof, if $x \in \Nb^\Nb$ and $i \in \Nb$, then $(x)_i \in \Nb^\Nb$ is the function defined by $(x)_i(n) = x(\la i, n \ra)$.  Similarly, if $\sigma \in \Nb^{<\Nb}$ and $i \in \Nb$, then $(\sigma)_i \in \Nb^{<\Nb}$ is the longest sequence such that $(\forall n < |(\sigma)_i|)[\la i, n \ra \in \dom \sigma \andd (\sigma)_i(n) = \sigma(\la i,n \ra)]$.

Let $(T_i)_{i \in \Nb}$ be a sequence of subtrees of $\Nb^{<\Nb}$.  We first define a code for the lower semi-continuous potential $\fnc \colon \Nb^\Nb \imp [0,1]$ given by
\begin{align*}
\fnc(x) = \sum_{i=0}^\infty \{2^{-i} : (x)_{i} \notin [T_i]\}.
\end{align*}
To do this, define $\ball{\sigma}{r} \vcode \Phi q$ if there is a $\tau \in \Nb^{<\Nb}$ such that $\ball{\sigma}{r} \ballsub \ball{\tau}{2^{-|\tau|}}$ and $q \leq \sum_{i=0}^\infty \{2^{- i } : (\tau)_{i } \notin T_i\}$.  For a given $x \in \Nb^\Nb$, one readily checks that $v = \sum _{i=0}^\infty \{2^{- i } : (x)_{ i } \notin [T_i]\}$ (which $\aca$ proves exists) is indeed the supremum of
\[\{q \in \Qb : (\exists \la \sigma,r \ra \in \Nb^{<\Nb} \times \Qb_{>0})(\ball \sigma r \vcode \Phi q \andd d(x,\sigma) < r)\}.\]
This shows that $\Phi$ correctly codes the desired potential $\fnc$ and that $\fnc$ is provably total in $\aca$.

By the $\fvp$, let $x_*$ be a critical point of $\fnc$.  Let $X = \{{i} : (x_*)_{i } \in [T_i]\}$.  We show that $(\forall i \geq 0 )  (i \in X \biimp \text{$T_i$ has a path})$.  Clearly, if $i \in X$, then $T_i$ has a path.  Suppose for a contradiction that there is a $j$ such that $T_j$ has a path, but $j \notin X$.  Let $h \in [T_j]$, and let $y \in \Nb^\Nb$ be such that for all $i , n \in \Nb$,
\begin{align*}
y(\la i,n \ra) =
\begin{cases}
h(n) & \text{if $i = j  $}\\
x_*(\la i,n \ra) & \text{otherwise},
\end{cases}
\end{align*}
so that $(y)_{j } = h$ and $\forall i [i \neq j   \imp (y)_i = (x_*)_i]$.  Then
\begin{align*}
d(x_*, y) \leq 2^{-\la j  , 0 \ra} \leq 2^{- j } = \fnc(x_*) - \fnc(y).
\end{align*}
This contradicts that $x_*$ is a critical point of $\fnc$, which completes the proof.
\end{proof}

\section{The localized variational principle}\label{secLVP}

In this section we compare the strength of the free and localized variational principles in different contexts.
First, we show that, in contrast to the $\fvp$, the $\lvp$ is not affected by the boundedness of $\fnc$.
We also show that the two principles are equivalent when $\fnc$ is not assumed to be continuous.
For the latter, it is clear that the $\lvp$ implies the $\fvp$, so we focus on the other direction.
The following two lemmas aid the proof of the above facts.

\begin{Lemma}[$\rca$]\label{lem-lscOpen}
Let $\spc X$ be a complete separable metric space, let $f \colon \spc X \imp \overline{\Rb}_{\geq 0}$ be lower semi-continuous, and let $\MC C \subseteq \spc X$ be closed.  Then there is a lower semi-continuous function $g \colon \spc X \imp \overline{\Rb}_{\geq 0}$ such that
\begin{align*}
g(x) =
\begin{cases}
0 & \text{if $x \in \MC C$}\\
f(x) & \text{if $x \not \in  \MC C$.}
\end{cases}
\end{align*}
\end{Lemma}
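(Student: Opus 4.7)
The plan is to construct a code $\Psi$ for $g$ directly. Let $\Phi$ be a code for $f$ and let $U \subseteq \Nb \times X \times \Qb_{>0}$ be a code for the open complement $\MC U = \spc X \setminus \MC C$. I would put $\ball{a}{r} \vcode \Psi q$ precisely when either $q \leq 0$, or both $\ball{a}{r} \vcode \Phi q$ and there exists $(n,b,s) \in U$ with $\ball{a}{r} \ballsub \ball{b}{s}$. Both clauses are $\Sigma^0_1$ in $(a, r, q)$, so $\Psi$ exists in $\rca$ by $\Delta^0_1$-comprehension on the relevant $\Sigma^0_1$ enumeration. Note also that, since $f$ is a potential, the clause $q \leq 0$ guarantees that $\Psi$ codes a function into $\overline{\Rb}_{\geq 0}$.

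The first routine step is to verify that $\Psi$ satisfies \textsc{(lsc1)} and \textsc{(lsc2)} from Definition~\ref{def-lsc}. Shrinking $\ball{a}{r}$ to $\ball{a'}{r'} \ballsub \ball{a}{r}$ preserves $\ball{a'}{r'} \ballsub \ball{b}{s}$ and $\ball{a'}{r'} \vcode \Phi q$ via \textsc{(lsc1)} for $\Phi$; decreasing $q$ either keeps us in the trivial case $q \leq 0$ or preserves $\ball{a}{r} \vcode \Phi q$ via \textsc{(lsc2)} for $\Phi$.

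The main content is to check the pointwise values, which I would split into two cases. If $x \in \MC C$, then no ball $\ball{a}{r}$ containing $x$ can satisfy the second clause, because $\ball{a}{r} \ballsub \ball{b}{s}$ with $(n,b,s) \in U$ would place $x \in \ball{b}{s} \subseteq \MC U$, contradicting $x \in \MC C$. Hence the only $q$ with $\ball{a}{r} \vcode \Psi q$ and $x \in \ball{a}{r}$ are those with $q \leq 0$, and the defining supremum equals $0$. If instead $x \in \MC U$, pick a witness $(n_0,b_0,s_0) \in U$ with $x \in \ball{b_0}{s_0}$. For any $\ball{a}{r}$ with $x \in \ball{a}{r} \ballsub \ball{b_0}{s_0}$ the second clause is automatically met, so $\ball{a}{r} \vcode \Psi q$ iff $\ball{a}{r} \vcode \Phi q$ or $q \leq 0$. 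Conversely, $\ball{a}{r} \vcode \Psi q$ always implies $q \leq 0$ or $\ball{a}{r} \vcode \Phi q$. Using $f \geq 0$, the supremum defining $g(x)$ therefore coincides with the supremum defining $f(x)$.

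The one mild subtlety is confirming that the defining supremum for $g(x)$ exists as an element of $\overline{\Rb}$ inside $\rca$, since suprema of arbitrary bounded sets of rationals are not generally available. This is not an obstacle here because in each case we pin $g(x)$ to a value already known to exist (namely $0$ on $\MC C$ and $f(x)$ on $\MC U$), so no comprehension beyond $\rca$ is needed. No further constructions involving compactness or envelopes enter, which is why the lemma holds in the base theory.
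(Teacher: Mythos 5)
Your construction is exactly the paper's: the same two-clause enumeration ($q \leq 0$, or $\ball{a}{r} \vcode{} q$ for $f$ together with $\ball{a}{r} \ballsub \ball{b}{s}$ for some ball enumerated in the complement of $\MC C$), followed by the same two-case verification of the pointwise values, with the shrinking via \textsc{(lsc1)} handling the case $x \notin \MC C$. The proposal is correct and matches the paper's proof.
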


\begin{proof} 
Let $\Psi$ be a code for $f$, and let $U$ be a code for the complement of $\MC C$.  Enumerate a code $\Phi$ for $g$ by enumerating $\ball{a}{r} \vcode \Phi q$ if either
\begin{itemize}
\item $q \leq 0$ or
\item $\ball{a}{r} \vcode \Psi q$ and there is an open ball $\ball{b}{s}$ enumerated in $U$ with $\ball{a}{r} \ballsub \ball{b}{s}$.
\end{itemize}
If $x \in \MC C$ then there are balls $\ball{a}{r}$ with $x \in \ball{a}{r}$ and $\ball{a}{r} \vcode \Phi 0$, but there are no balls $\ball{a}{r}$ and $q > 0$ such that $x \in \ball{a}{r}$ and $\ball{a}{r} \vcode \Phi q$.  Hence $g(x) = 0$.
If $x \notin \MC C$, given $q \in \Qb_{>0}$, it is clear from the definition of $\Phi$ that if there is a ball $\ball{a}{r}$ with $x \in \ball{a}{r}$ and $\ball{a}{r} \vcode \Phi q$ then also $\ball{a}{r} \vcode \Psi q$.
Conversely, suppose that there is a ball $\ball{a}{r}$ with $x \in \ball{a}{r}$ and $\ball{a}{r} \vcode \Psi q$.
Since $x \notin \MC C$, $x \in \ball{b}{s}$ for some ball $\ball{b}{s}$ enumerated in $U$, and hence there are $a', r'$ such that $x \in \ball{a'}{r'}\ballsub \ball ar, \ball{b}{s} $. But then by condition ({\sc lsc}1) of Definition~\ref{def-lsc}, $\ball{a'}{r'} \vcode \Psi q$, so that also $\ball{a'}{r'} \vcode \Phi q$.  Since $q$ was arbitrary we conclude that $g(x) = f(x)$ given that both are the supremum of such $q$ in $\overline \Rb$.
\end{proof}

\begin{Lemma}[$\rca$]\label{lem-lscSumMax}
Let $\spc X$ be a complete separable metric space, and let $f ,g \colon \spc X \imp  \overline{\Rb}_{\geq 0}$ be lower semi-continuous.  Then $(f+g),\max\{f,g\},\min\{f,g\} \colon \spc X \imp \overline{\Rb}_{\geq 0}$ (defined in the usual way) are lower semi-continuous.
\end{Lemma}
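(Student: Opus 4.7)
The plan is to construct codes $\Phi_+$, $\Phi_{\max}$, and $\Phi_{\min}$ for the three functions directly from given codes $\Phi$ for $f$ and $\Psi$ for $g$. In each case the defining condition will be $\Sigma^0_1$ in $\Phi$ and $\Psi$, so $\rca$ produces the codes by $\Delta^0_1$-comprehension. The work then is to verify (LSC1) and (LSC2) of Definition~\ref{def-lsc} and to check that the supremum defining the value at each point $x$ evaluates to the correct real in $\overline{\Rb}_{\geq 0}$.

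For $\max\{f,g\}$ I would simply take $\ball{a}{r}\vcode{\Phi_{\max}}q$ to mean $\ball{a}{r}\vcode \Phi q$ or $\ball{a}{r}\vcode \Psi q$. Then (LSC1) and (LSC2) transfer coordinate-wise from the axioms on $\Phi$ and $\Psi$, and the set of rationals $q$ for which some ball around $x$ is a $\Phi_{\max}$-witness is the union of the corresponding sets for $\Phi$ and $\Psi$, whose supremum is $\max\{f(x),g(x)\}$. For $\min\{f,g\}$ I would set $\ball{a}{r}\vcode{\Phi_{\min}}q$ iff there exist balls $\ball{b}{s},\ball{c}{t}$ with $\ball{a}{r}\ballsub \ball{b}{s}$, $\ball{a}{r}\ballsub \ball{c}{t}$, $\ball{b}{s}\vcode \Phi q$, and $\ball{c}{t}\vcode \Psi q$. (LSC1) and (LSC2) are straightforward: (LSC1) follows because shrinking $\ball{a}{r}$ preserves the two inclusions, and (LSC2) is immediate because both $\Phi$ and $\Psi$ satisfy it. Such a witness forces both $f(x)\geq q$ and $g(x)\geq q$ whenever $x\in \ball{a}{r}$, so $\min\{f,g\}(x)\geq q$; conversely, given $q<\min\{f(x),g(x)\}$, I would pick separate balls around $x$ witnessing $f(x),g(x)\geq q$ and then shrink to a common sub-ball properly inside each.

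For $f+g$ the construction is analogous: set $\ball{a}{r}\vcode{\Phi_+}q$ iff there exist rationals $q_1,q_2$ and balls $\ball{b}{s}\vcode \Phi q_1$, $\ball{c}{t}\vcode \Psi q_2$ with $\ball{a}{r}\ballsub \ball{b}{s}$, $\ball{a}{r}\ballsub \ball{c}{t}$, and $q_1+q_2\geq q$. The forward direction of the supremum identity is immediate: any such witness forces $f(x)\geq q_1$ and $g(x)\geq q_2$, hence $f(x)+g(x)\geq q$. For the reverse, given $q<f(x)+g(x)$, I would decompose $q$ as $q_1+q_2$ with $q_1<f(x)$ and $q_2<g(x)$, find balls around $x$ individually witnessing each, and intersect. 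The one non-routine step I expect is this decomposition when $f(x)$ or $g(x)$ equals $0$ or $\infty$: since $f,g\geq 0$, arbitrarily small negative rationals are always available as witnesses by applying (LSC2) to any original witness, and when $f(x)=\infty$ (resp.\ $g(x)=\infty$) any finite $q_1$ (resp.\ $q_2$) has a witnessing ball, so a suitable decomposition exists in every case. Non-negativity of $f$ and $g$ also ensures that $f+g$, $\max\{f,g\}$, and $\min\{f,g\}$ all land in $\overline{\Rb}_{\geq 0}$, as required.
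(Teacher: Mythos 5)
Your proposal is correct and follows essentially the same route as the paper: the paper's proof simply writes down the three codes (using the same ball for both witnesses, which is equivalent to your enlarged-ball formulation via ({\sc lsc}1)) and leaves the verification of ({\sc lsc}1), ({\sc lsc}2), and the supremum identities to the reader. Your additional checks, including the decomposition of $q<f(x)+g(x)$ in the edge cases $0$ and $\infty$, are exactly the omitted details and are handled correctly.
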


\begin{proof}
Let $\Psi_0$ be a code for $f$ and let $\Psi_1$ be a code for $g$.  Enumerate a code for $f+g$ by enumerating $\ball{a}{r} \vcode{\Phi} q$ if there are $q_0,q_1$ such that $\ball{a}{r} \vcode{\Psi_0} q_0$, $\ball{a}{r} \vcode{\Psi_1} q_1$, and $q \leq q_0 + q_1$.
Enumerate a code $\Phi$ for $\max\{f,g\}$ by enumerating $\ball{a}{r} \vcode \Phi q$ if either $\ball{a}{r} \vcode {\Psi_0} q$ or $\ball{a}{r} \vcode {\Psi_1} q$.
Enumerate a code $\Phi$ for $\min\{f,g\}$ by enumerating $\ball{a}{r} \vcode \Phi q$ if both $\ball{a}{r} \vcode {\Psi_0} q$ and $\ball{a}{r} \vcode {\Psi_1} q$.
\end{proof}

\begin{Lemma}[$\rca$]\label{lemmBoundedLVP}
For every complete separable metric space $\spc X$, the $\lvp$ for bounded potentials on $\spc X$ implies the $\lvp$ on $\spc X$, and the $\lvp$ for bounded continuous potentials on $\spc X$ implies the $\lvp$ for continuous potentials on $\spc X$.
\end{Lemma}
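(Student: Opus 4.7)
The plan is to reduce the unrestricted (resp.\ continuous) $\lvp$ on $\spc X$ to its bounded counterpart by truncating $\fnc$ at the value $c:=\fnc(x_0)$. By Lemma~\ref{lemEpsRest}\eqref{itEpsUnbounded} we may restrict to $\varepsilon=1$ throughout. Given a potential $\fnc\colon\spc X\to\overline{\Rb}_{\geq 0}$ (continuous in the second implication) with non-empty support and a point $x_0\in\lscsupp(\fnc)$, the real $c:=\fnc(x_0)$ is available as a Cauchy sequence, so I would code the constant function $c$ and then take $g:=\min\{\fnc,c\}$, a potential bounded by $c$. By Lemma~\ref{lem-lscSumMax}, $g$ is lower semi-continuous; if $\fnc$ is continuous then the dual of Lemma~\ref{lem-lscSumMax} also makes $g$ upper semi-continuous, whence Proposition~\ref{prop-LSCchar}\ref{itLSCisCont} yields a continuous code for $g$.

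Since $g(x_0)=c=\fnc(x_0)$, we have $x_0\in\lscsupp(g)$. Applying the $1$-$\lvp$ for bounded (continuous) potentials on $\spc X$ to $g$ with start point $x_0$ produces an $x_\ast\in\lscsupp(g)$ with $d(x_0,x_\ast)\leq g(x_0)-g(x_\ast)$ that is critical for $g$. The key reduction step is to establish $\fnc(x_\ast)\leq c$, so that $g(x_\ast)=\fnc(x_\ast)$: if $\fnc(x_\ast)>c$, then $g(x_\ast)=c=g(x_0)$, the localization clause gives $d(x_0,x_\ast)\leq 0$, hence $x_\ast=x_0$, contradicting $\fnc(x_\ast)>c=\fnc(x_0)$. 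With $g(x_\ast)=\fnc(x_\ast)$ in hand, the localization inequality for $g$ is literally the localization inequality for $\fnc$.

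To transfer criticality, suppose $y\in\lscsupp(\fnc)$ satisfies $d(x_\ast,y)\leq \fnc(x_\ast)-\fnc(y)$. Since $d(x_\ast,y)\geq 0$, we get $\fnc(y)\leq\fnc(x_\ast)\leq c$, so $g(y)=\fnc(y)$ and $d(x_\ast,y)\leq g(x_\ast)-g(y)$; criticality of $x_\ast$ for $g$ then forces $y=x_\ast$. The main obstacle is the truncation step itself---specifically, formalizing $\min\{\fnc,c\}$ as a code definable in $\rca$ and carrying through the small case analysis showing $\fnc(x_\ast)\leq c$; the remainder is bookkeeping.
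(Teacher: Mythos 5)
Your proof is correct and takes essentially the same route as the paper's: the paper likewise truncates via $\tilde f = \min\{f, f(x_0)\}$, cites Lemma~\ref{lem-lscSumMax} for semi-continuity of the truncation, and uses the localization inequality to show that the case $\tilde f(x_\ast)=f(x_0)$ forces $x_\ast=x_0$ before transferring the localization and criticality back to $f$. The only cosmetic differences are your (harmless but unnecessary) normalization to $\varepsilon=1$ and your explicit routing of the continuity of $\min\{f,c\}$ through Proposition~\ref{prop-LSCchar}.
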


\begin{proof}
Given a complete separable metric space $\spc X$, a potential $f \colon \spc X \to \overline{\Rb}_{\geq 0}$, and an $x_0 \in \lscsupp (f)$, define $\tilde{f} \colon \spc X \to \Rb_{\geq 0}$ by setting $\tilde{f}(x) = \min \{f(x), f(x_0)\}$.  Then $\tilde{f}$ is bounded, is a potential by Lemma~\ref{lem-lscSumMax}, and is continuous if $f$ is continuous.  Suppose that $\varepsilon > 0$ and that $x_* \in \spc X$ is an $\varepsilon$-critical point of $\tilde f$ with $\varepsilon d(x_0, x_*) \leq \tilde{f}(x_0) - \tilde{f}(x_*)$.  Then either $\tilde{f}(x_*) = f(x_*)$ or $\tilde{f}(x_*) = f(x_0)$.  However, if $\tilde{f}(x_*) = f(x_0)$, then $\varepsilon d(x_0, x_*) \leq \tilde{f}(x_0) - \tilde{f}(x_*) = f(x_0) - f(x_0) = 0$.  Thus $d(x_0, x_*) = 0$ and $x_* = x_0$, in which case again $\tilde{f}(x_*) = f(x_*)$.  Thus in either case $\tilde{f}(x_*) = f(x_*)$.  Therefore $\varepsilon d(x_0, x_*) \leq \tilde{f}(x_0) - \tilde{f}(x_*) = f(x_0) - f(x_*)$.  Furthermore, if $x \in \spc X$ and $\varepsilon d(x_*, x) \leq f(x_*) - f(x)$, then also $\varepsilon d(x_*, x) \leq \tilde{f}(x_*) - \tilde{f}(x)$, so $x = x_*$ because $x_*$ is an $\varepsilon$-critical point of $\tilde f$.  Thus $x_*$ is also an $\varepsilon$-critical point of $f$ with $\varepsilon d(x_0, x_*) \leq f(x_0) - f(x_*)$.
\end{proof}

\begin{Lemma}[$\rca$]\label{lemmFVPtoLVP}
For every complete separable metric space $\spc X$, the $\fvp$ on $\spc X$ implies the $\lvp$ on $\spc X$.
\end{Lemma}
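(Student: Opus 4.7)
The plan is to reduce the $\lvp$ to the $\fvp$ by replacing $f$ with an auxiliary potential $g$ that equals $f$ inside the ``localization region'' and is identically $+\infty$ outside it, so that any critical point of $g$ in $\lscsupp(g)$ automatically satisfies the localization inequality. Fix $\spc X$, a potential $f\colon\spc X\to\overline{\Rb}_{\geq 0}$ with code $\Psi$, a point $x_0\in\lscsupp(f)$, and $\varepsilon>0$. Set
\[
\MC C=\{y\in\spc X:f(y)+\varepsilon d(x_0,y)\leq f(x_0)\}\qquad\text{and}\qquad\MC U=\spc X\setminus\MC C.
\]
Since $y\in\MC U$ iff there are rationals $p,q$ and balls witnessing $f(y)>p$ and $\varepsilon d(x_0,y)>q$ with $p+q\geq f(x_0)$, the set $\MC U$ is $\Sigma^0_1$-definable and thus has a code $U$ by Lemma~\ref{lem-code-for-open}.

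Next I would construct a code $\Phi$ for a potential $g$ with $g\rest\MC C=f\rest\MC C$ and $g\equiv +\infty$ on $\MC U$, mimicking Lemma~\ref{lem-lscOpen} but forcing $+\infty$ rather than $0$. Enumerate $\ball{a}{r}\vcode\Phi q$ whenever either (i)~$\ball{a}{r}\vcode\Psi q$, or (ii)~there exists $(n,b,s)\in U$ with $\ball{a}{r}\ballsub\ball{b}{s}$. Conditions \textsc{(lsc1)} and \textsc{(lsc2)} are routine. For the pointwise values: if $y\in\MC U$, then some $\ball{b}{s}$ enumerated in $U$ contains $y$, hence there is a ball $\ball{a}{r}$ with $y\in\ball{a}{r}\ballsub\ball{b}{s}$ so that $\ball{a}{r}\vcode\Phi q$ for \emph{every} $q\in\Qb$, giving $g(y)=+\infty$. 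If $y\in\MC C$, condition (ii) cannot apply to any $\ball{a}{r}\ni y$ because $\ball{a}{r}\ballsub\ball{b}{s}\subseteq\MC U$ would give $y\in\MC U$; so only (i) witnesses $\ball{a}{r}\vcode\Phi q$, and $g(y)=f(y)\in\overline{\Rb}_{\geq 0}$. In particular $x_0\in\MC C$ and $g(x_0)=f(x_0)<\infty$, so $\lscsupp(g)\neq\varnothing$.

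Now apply the $\fvp$ on $\spc X$ to $g$ at scale $\varepsilon$ to obtain some $x_*\in\lscsupp(g)$ that is $\varepsilon$-critical for $g$. Since $g(x_*)<\infty$, necessarily $x_*\in\MC C$, which is exactly the localization inequality $\varepsilon d(x_0,x_*)\leq f(x_0)-f(x_*)$. To transfer criticality back to $f$, suppose $y\in\lscsupp(f)$ satisfies $\varepsilon d(x_*,y)\leq f(x_*)-f(y)$. The triangle inequality gives
\[
\varepsilon d(x_0,y)\leq\varepsilon d(x_0,x_*)+\varepsilon d(x_*,y)\leq\bl f(x_0)-f(x_*)\br+\bl f(x_*)-f(y)\br=f(x_0)-f(y),
\]
so $y\in\MC C$, whence $g(y)=f(y)<\infty$ and $y\in\lscsupp(g)$. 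But then $\varepsilon d(x_*,y)\leq g(x_*)-g(y)$ and the $\varepsilon$-criticality of $x_*$ for $g$ force $y=x_*$. Hence $x_*$ is $\varepsilon$-critical for $f$ and satisfies the localization condition, proving the $\lvp$ on $\spc X$.

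The only delicate point is the construction and verification of $\Phi$ in $\rca$: we must confirm that the ``infinity outside $\MC C$'' behavior arises purely from the $\Sigma^0_1$ code $U$ of $\MC U$, without ever asking whether an individual point lies in $\MC C$. Once that is cleanly handled as above, the remaining steps are short triangle-inequality manipulations.
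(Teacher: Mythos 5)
Your proposal is correct, and it follows the same overall strategy as the paper's proof: form the closed set $\MC C=\{y: \varepsilon d(x_0,y)\le f(x_0)-f(y)\}$, penalize $f$ outside $\MC C$ so that the $\fvp$'s critical point is forced into $\MC C$, and then transfer criticality back to $f$ via the triangle inequality (that last computation is identical in both arguments). The only real difference is the penalty. The paper replaces $f$ outside $\MC C$ by $\max\{f(x),\varepsilon d(x,x_0)+f(x_0)\}$ (built from Lemmas \ref{lem-lscOpen} and \ref{lem-lscSumMax}), which keeps $\lscsupp(\tilde f)=\lscsupp(f)$, and then deduces $x_*\in\MC C$ by observing that otherwise $x_0$ itself would witness failure of criticality. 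You instead set $g\equiv+\infty$ on the complement, so that $\lscsupp(g)=\MC C$ and membership of $x_*$ in $\MC C$ --- hence the localization inequality --- is immediate from the formalization of the $\fvp$ in Definition \ref{DefFormalCritical}, which requires $x_*\in\lscsupp(g)$. This is a perfectly legitimate shortcut here, though it would not survive the restricted variants (continuous or bounded potentials), which is presumably why the paper prefers the finite penalty; since this lemma concerns only the unrestricted classes, your version suffices.

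One small point to tidy: your $\Sigma^0_1$ description of $\MC U$ uses the clause $p+q\ge f(x_0)$, which is $\Pi^0_1$ (a non-strict comparison of a rational with a real), so the formula as written is not syntactically $\Sigma^0_1$. Replacing it by the strict inequality $p+q>f(x_0)$ defines the same set (one can always shrink the slack in choosing $p$ and $q$) and is genuinely $\Sigma^0_1$, after which Lemma \ref{lem-code-for-open} applies as you intend. The verification that $g$ is total (every point lies in exactly one of $\MC C$, $\MC U$, by classical trichotomy for $\overline\Rb$) and that $g$ inherits the potential condition $\ball{a}{r}\vcode{\Phi}0$ from $\Psi$ are both fine.
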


\begin{proof}
Assume the $\fvp$.  Let $f \colon \spc X \to \overline{\Rb}_{\geq 0}$ be any potential, $\varepsilon > 0$, $x_0 \in \lscsupp (f)$, and let $\MC C$ be the closed set
\[\mathcal C = \{x\in \spc X : \varepsilon d(x,x_0) \leq f(x_0) -  f(x) \}.\]
To see that $\mathcal C$ is closed, rewrite $\mathcal C$ as $\mathcal C = \{x\in \spc X : f(x) + \varepsilon d(x,x_0) \leq f(x_0) \}$, and observe that the function $g(x) = f(x) + \varepsilon d(x,x_0)$ is lower semi-continuous by Proposition~\ref{prop-LSCchar} and Lemma~\ref{lem-lscSumMax}.  As in the proof of Proposition~\ref{prop-LSCandGraph}, $g(x) \leq f(x_0)$ is a $\Pi^0_1$-definable property of $x$, so $\mathcal C$ is closed.

By Lemma~\ref{lem-lscOpen} and Lemma~\ref{lem-lscSumMax}, the function $\tilde f \colon \spc X \to \overline{\Rb}_{\geq 0}$ given by 
\begin{align*}
\tilde{f}(x) =
\begin{cases}
f(x) & \text{if $x \in \MC C$}\\
\max\{f(x), \varepsilon d(x, x_0) + f(x_0)\} & \text{if $x \notin \MC C$}
\end{cases}
\end{align*}
is lower semi-continuous.  Let $x_\ast$ be an $\varepsilon$-critical point of $\tilde f$.  It must be that $x_\ast \in \MC C$, for otherwise we would have that $x_\ast \neq x_0$ (as $x_0 \in \MC C$), but $\tilde{f}(x_\ast) \geq \varepsilon d(x_\ast, x_0) + f(x_0) = \varepsilon d(x_\ast, x_0) + \tilde{f}(x_0)$, meaning that $x_0$ witnesses that $x_\ast$ is not an $\varepsilon$-critical point of $\tilde f$.

We have that $f(x_\ast) \leq f(x_0) - \varepsilon d(x_\ast,x_0)$ because $x_\ast \in \mathcal C$.  Moreover, if $x \in \spc X$ and $f(x) \leq f(x_\ast) - \varepsilon d(x_\ast,x)$, then
\[f(x) \leq f(x_\ast) - \varepsilon d(x_\ast,x) \leq f(x_0) - \varepsilon d(x_\ast,x_0) - \varepsilon d(x_\ast,x) \leq f(x_0) - \varepsilon d(x,x_0)\]
(where the last inequality is by the triangle inequality), so $x \in \mathcal C$ and therefore $\tilde{f}(x) = f(x)$.  Thus for an $x \in \spc X$ with $f(x) \leq f(x_\ast) - \varepsilon d(x_\ast,x)$,
\[\tilde f(x) = f(x) \leq f(x_\ast) - \varepsilon d(x_\ast,x) = \tilde f(x_\ast) - \varepsilon d(x_\ast,x),\]
and therefore $x=x_\ast$ because $x_\ast$ is an $\varepsilon$-critical point of $\tilde f$.  So $x_\ast$ is also an $\varepsilon$-critical point of $f$.
We conclude that $x_\ast$ is an $\varepsilon$-critical point of $f$ and that $f(x_\ast) \leq f(x_0) - \varepsilon d(x_\ast, x_0)$.
\end{proof}

Thus the $\fvp$ and the $\lvp$ for arbitrary $f$ are equivalent.
In contrast, the $\lvp$ for continuous functions suffices to prove the full $\fvp$.
To make this precise, we introduce the notion of {\em pseudo-fibrations.}
Below, by a {\em closed isometry} we mean a continuous function $f\colon \spc X \to \spc Y$ such that $ d_\spc X (x_0,x_1) =  d_\spc Y(f(x_0),f(x_1) ) $ for all $x_0 , x_1 \in \spc X$ and such that $f[\spc X]$ is closed.  We check that if $f$ is a closed isometry and $\mathcal C \subseteq \spc X$ is closed, then $\rca$ proves that $f[\mathcal C]$ is closed as well.

\begin{Lemma}[$\rca$]\label{lem-IsoImage}
Let $\spc X$ be a complete separable metric space, let $\MC C \subseteq \spc X$ be closed, and let $f \colon \spc X \to \spc Y$ be a closed isometry.  Then $f[\MC C] \subseteq \spc Y$ is closed.
\end{Lemma}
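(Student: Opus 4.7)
The plan is to show that the complement $\spc Y \setminus f[\MC C]$ is open by producing a $\Sigma^0_1$-definition for it (in the variable $y \in \spc Y$) and invoking Lemma~\ref{lem-code-for-open}; the code for $f[\MC C]$ as a closed set is then just the code of its complement. The decomposition I will use is
\[
\spc Y \setminus f[\MC C] \;=\; \bl \spc Y \setminus f[\spc X]\br \,\cup\, f[\MC U], \qquad \MC U := \spc X \setminus \MC C.
\]
The first piece is open by hypothesis (since $f$ is a \emph{closed} isometry, so $\spc Y \setminus f[\spc X]$ comes with an open-set code). The task reduces to giving a $\Sigma^0_1$-description of a set $\MC V \subseteq \spc Y$ such that $\MC V \cup (\spc Y \setminus f[\spc X]) = \spc Y \setminus f[\MC C]$.

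For this I take
\[
\MC V \;=\; \{ y \in \spc Y : \exists \la a,r\ra \text{ enumerated in the code for }\MC U \text{ with } d_{\spc Y}(y,f(a)) < r \}.
\]
The condition is $\Sigma^0_1$ in $y$: the enumeration of $\MC U$ is a $\Sigma^0_1$-event, and although $f(a)$ is a coded point of $\spc Y$ rather than an element of the dense set $Y$, the strict inequality $d_{\spc Y}(y,f(a)) < r$ on real numbers is $\Sigma^0_1$, witnessed by sufficiently good rational approximations. Invariance under $=_{\spc Y}$ is clear. Then I verify the two containments: if $y = f(x)$ with $x \in \MC U$, pick $\ball{a}{r}$ in the enumeration of $\MC U$ with $x \in \ball{a}{r}$; the isometry property gives $d_{\spc Y}(y,f(a)) = d_{\spc X}(x,a) < r$, so $y \in \MC V$ and hence $f[\MC U] \subseteq \MC V$. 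Conversely, if $y \in \MC V$ witnessed by $\la a,r \ra$, then either $y \notin f[\spc X]$ (and we are done by the other piece) or $y = f(x')$ for some $x' \in \spc X$, in which case $d_{\spc X}(x',a) = d_{\spc Y}(y,f(a)) < r$ forces $x' \in \ball{a}{r} \subseteq \MC U$, so $y \in f[\MC U] \subseteq \spc Y \setminus f[\MC C]$.

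Combining, the union of the $\Sigma^0_1$-definition of $\MC V$ with the open-set code for $\spc Y \setminus f[\spc X]$ yields a $\Sigma^0_1$-formula defining $\spc Y \setminus f[\MC C]$. Applying Lemma~\ref{lem-code-for-open} produces an open-set code for $\spc Y \setminus f[\MC C]$, which by definition is a code for the closed set $f[\MC C]$.

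The argument is essentially routine in $\rca$; the only point that requires a moment's care is that $f(a)$ is given as a Cauchy sequence rather than a nominal point of the dense set $Y$, so checking $d_{\spc Y}(y,f(a)) < r$ must be phrased as a $\Sigma^0_1$-event rather than a $\Delta^0_0$-event. This is the closest thing to an obstacle, and it is handled by the standard observation that strict inequalities between reals are $\Sigma^0_1$. Once this is in place, the isometry identity $d_{\spc X}(x,a) = d_{\spc Y}(f(x),f(a))$ does all the real work of pulling openness of $\MC U$ over to openness of $f[\MC U]$ within $f[\spc X]$.
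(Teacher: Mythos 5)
Your proof is correct and follows essentially the same route as the paper's: both present the complement of $f[\MC C]$ as the union of the (already coded) complement of $f[\spc X]$ with the balls around $f(a)$ of radius $r$ for $\ball{a}{r}$ enumerated in the code of $\spc X \setminus \MC C$, using the isometry identity $d_{\spc Y}(f(x),f(a)) = d_{\spc X}(x,a)$ in both directions. The only difference is bookkeeping: the paper enumerates explicit balls $\ball{b}{s}$ with $d_{\spc Y}(f(a),b)+s<r$, whereas you write the equivalent $\Sigma^0_1$ predicate and invoke Lemma~\ref{lem-code-for-open}.
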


\begin{proof}
Let $U$ be a code for the complement of $\MC C \subseteq \spc X$, and let $V_0$ be a code for the complement of $f[\spc X] \subseteq \spc Y$.  Enumerate a code $V$ for an open $\MC V \subseteq \spc Y$ by enumerating the ball $\ball{b}{s}$ in $V$ if either
\begin{itemize}
\item $\ball{b}{s}$ is enumerated in $V_0$ or
\item there is a ball $\ball{a}{r}$ enumerated in $U$ such that $d_{\spc{Y}}(f(a),b) + s < r$.
\end{itemize}
We show that a $y \in \spc Y$ is in $\MC V$ if and only if there is no $x \in \MC C$ such that $f(x) = y$.  This shows that $\MC V$ is the complement of $f[\MC C]$ and hence that $f[\MC C]$ is closed.

For $y \in \spc Y$, we know that there is no $x \in \spc X$ such that $y = f(x)$ if and only if there is a ball $\ball{b}{s}$ containing $y$ that is enumerated in $V_0$.  Thus it suffices to assume that there is an $x \in \spc X$ such that $y = f(x)$ and show that $y \in \MC V$ if and only if $x \notin \MC C$.  Note that $x$ is unique because $f$ is an isometry.

First suppose that $x \notin \MC C$.  Then $x \in \ball{a}{r}$ for some ball $\ball{a}{r}$ enumerated in $U$.  We have that $d_{\spc Y}(f(a), y) = d_{\spc X}(a,x) < r$ because $y = f(x)$ and $f$ is an isometry.  Thus by choosing $b$ sufficiently close to $y$, we see that there is a ball $\ball{b}{s}$ containing $y$ with $d_{\spc{Y}}(f(a), b) + s < r$.  Thus $y \in \MC V$.

Conversely, suppose that $y \in \MC V$.  Then there are balls $\ball{b}{s} \subseteq \spc Y$ and $\ball{a}{r} \subseteq \spc X$ such that $y \in \ball{b}{s}$, $\ball{a}{r}$ is enumerated in $U$, and $d_{\spc{Y}}(f(a), b) + s < r$.  Therefore $d_{\spc Y}(f(a), f(x)) < r$ because $f(x) = y \in \ball{b}{s}$.  Thus $d_{\spc X}(a, x) < r$ because $f$ is an isometry.  Therefore $x \in \ball{a}{r}$, so $x \notin \MC C$.
\end{proof}

\begin{Definition}\label{defPseudofib}
Let $\spc X, \spc Y$ be complete separable metric spaces.  Say that a space $\spc Z$ is an \emph{$\spc X$-pseudofibration} of $\spc Y$ if there are a closed isometry $\iota : \spc X \times \spc Y \to \spc Z$ and a continuous function $\pi \colon \spc Z \to \spc Y$ such that
\begin{enumerate}

\item \label{itPseudofib1} for all $z,z' \in \spc Z$, $d_\spc Y (\pi(z),\pi (z')) \leq d_\spc Z(z,z')$ and

\item \label{itPseudofib2} for all $\la x,y\ra  \in \spc X \times \spc Y$, $\pi  \iota (  x,y  ) = y$.
\end{enumerate}
\end{Definition}

Of course the typical example of an $\spc X$-pseudofibration of $\spc Y$ is $\spc X \times \spc Y$, but later we will see that there are others.
Pseudofibrations are useful due to the following.

\begin{Lemma}[$\rca$]\label{lemmLVPtoFVP}\
\begin{enumerate}

\item

If $\spc X$ and $\spc Z$ are complete separable metric spaces such that $\spc Z$ is an $\spc X$-pseudofibration of $\Rb_{\geq 0}$, then the $\lvp$ for continuous potentials on $\spc Z$ implies the $\fvp$ on $\spc X$.

\item

If $\spc X$ and $\spc Z$ are complete separable metric spaces such that $\spc Z$ is an $\spc X$-pseudofibration of $[0,1]$, then the $\lvp$ for continuous potentials on $\spc Z$ implies the $1$-$\fvp$ on $\spc X$.

\end{enumerate}
\end{Lemma}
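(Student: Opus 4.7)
The plan is to convert the hypothesized $\lvp$ for continuous potentials on $\spc Z$ into an $\fvp$-critical point of an arbitrary lower semi-continuous $f\colon \spc X \to \overline \Rb_{\geq 0}$ (with $f \leq 1$ in case (2)) by working on the image of the epigraph of $f$ inside $\spc Z$.  First I would reduce to $\varepsilon = 1$ by Lemma~\ref{lemEpsRest}.\ref{itEpsUnbounded}, fix $x_0 \in \lscsupp(f)$, and set $z_0 = \iota(x_0, f(x_0))$.  By Proposition~\ref{prop-LSCandGraph}.\ref{itLSCandGraphOne}, the epigraph $\MC E := \{(x,y) \in \spc X \times \spc Y : f(x) \leq y\}$ is closed in $\spc X \times \spc Y$, so by Lemma~\ref{lem-IsoImage} its image $\MC F := \iota[\MC E] \subseteq \spc Z$ is closed, and $z_0 \in \MC F$.

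I would then define the continuous potential $g\colon \spc Z \to \Rb_{\geq 0}$ by
\[
 g(z) = \pi(z) + 3\cdot d_{\spc Z}(z, \MC F),
\]
which coincides with $\pi$ on $\MC F$, and apply the $\lvp$ at $z_0$ to obtain $z_*$ critical for $g$ with $d(z_0, z_*) \leq g(z_0) - g(z_*)$.  The remainder is three short deductions using only the $\iota$-isometry and Definition~\ref{defPseudofib}. \emph{Step 1 ($z_* \in \MC F$):} if $\delta := d(z_*, \MC F) > 0$, then any $p \in \MC F$ with $d(z_*, p) < \delta + \eta$ satisfies, by Definition~\ref{defPseudofib}.\ref{itPseudofib1}, $g(p) + d(p, z_*) = \pi(p) + d(p, z_*) \leq \pi(z_*) + 2(\delta+\eta) < g(z_*) = \pi(z_*) + 3\delta$ once $\eta < \delta/2$, contradicting criticality. \emph{Step 2 ($y_* = f(x_*)$):} writing $z_* = \iota(x_*, y_*)$ with $y_* \geq f(x_*)$, if $y_* > f(x_*)$ then $z' := \iota(x_*, f(x_*)) \in \MC F \setminus \{z_*\}$ satisfies $d(z_*, z') = y_* - f(x_*) = g(z_*) - g(z')$, again contradicting criticality. \emph{Step 3 ($x_*$ is critical for $f$):} for any $x \neq x_*$ with $d_{\spc X}(x_*, x) \leq f(x_*) - f(x)$, the point $z'' := \iota(x, f(x)) \in \MC F \setminus \{z_*\}$ satisfies $d(z_*, z'') = f(x_*) - f(x) = g(z_*) - g(z'')$, a final contradiction.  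Both (1) and (2) follow.

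The hard part will be the $\rca$-level construction of $g$. Although $d_{\spc Z}(\cdot, \MC F)$ is $1$-Lipschitz (hence continuous) and a code for $\MC F^c$ as an open set in $\spc Z$ is furnished by (the proof of) Lemma~\ref{lem-IsoImage}, producing its values on a dense set in general involves infima not available in $\rca$.  I would handle this by coding $g$ directly from the given enumeration of rational balls $\ball{a}{r}$ appearing in $\MC F^c$: using the fact that $d_{\spc Z}(z, \MC F) = \sup\{r - d_{\spc Z}(z, a) : \ball{a}{r} \text{ in the enumeration of } \MC F^c,\ d_{\spc Z}(z, a) < r\}$, these $\Sigma^0_1$ lower bounds, combined with the uniform $1$-Lipschitz modulus and the continuity of $\pi$, can be assembled into an honest code for the continuous function $g$ in the sense of Definition~\ref{def-continuous}.
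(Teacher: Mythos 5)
Your overall architecture matches the paper's: embed the epigraph of $f$ into $\spc Z$ via $\iota$, perturb $\pi$ by a continuous penalty vanishing exactly on $\MC F=\iota[\MC E]$, and then run the three-step criticality argument (your Steps 2 and 3 are essentially identical to the paper's). But there is a genuine gap at the one place you flag as ``the hard part'': the function $z\mapsto d_{\spc Z}(z,\MC F)$ cannot be constructed in $\rca$ from a code for the closed set $\MC F$. A closed set is coded only by an enumeration of rational balls exhausting its complement, and your proposed formula $\sup\{r-d_{\spc Z}(z,a): \ball{a}{r}\text{ enumerated in }\MC F^{c},\ d_{\spc Z}(z,a)<r\}$ computes only a lower bound on the distance, one that depends on the particular enumeration and is in general strictly smaller than $d_{\spc Z}(z,\MC F)$ (e.g.\ code $\Rb\setminus\{0\}$ by the balls $\ball{q}{|q|/2}$, $q\in\Qb\setminus\{0\}$: the formula returns $\tfrac12$ at $z=1$, not $1$). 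For the sets $\MC F$ actually arising here, computing $d_{\spc Z}(\cdot,\MC F)$ amounts to computing infima of the lower semi-continuous $f$ over balls, i.e.\ to honestly coding $f$, which by Lemma~\ref{lem-HonestCodes} requires $\pica$ in general.

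The gap is essential rather than technical, and here is a quick sanity check that exposes it: your Steps 1--3 use only the criticality of $z_*$ and never the localization condition $d(z_0,z_*)\le g(z_0)-g(z_*)$. So if your $g=\pi+3\,d_{\spc Z}(\cdot,\MC F)$ were constructible in $\rca$ (or even $\aca$), your argument would derive the full $\fvp$ on the Baire space from the \emph{free} variational principle for continuous potentials on $\spc Z$, and hence $\aca\vdash\pica$ by Theorem~\ref{thm-ContinuousCritPt} and Proposition~\ref{prop:critical-vs-Pi11CA-RCA} --- a contradiction. The paper's proof avoids the distance function entirely: it invokes Urysohn's lemma (\cite[Lemma~II.7.3]{SimpsonSOSOA}, provable in $\rca$) to get \emph{some} continuous $g$ with $g(z)=0$ iff $z\in\Gamma$, with no quantitative control away from $\Gamma$; the price is that criticality alone no longer forces $z_*\in\Gamma$, and this is precisely where the localization condition of the $\lvp$, combined with the $1$-Lipschitz property of $\pi$ from Definition~\ref{defPseudofib}.\ref{itPseudofib1} and the fact that $z_0\in\Gamma$, is used to conclude $g(z_*)=0$. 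Replacing your Step~1 and the construction of the penalty term by this Urysohn-plus-localization argument repairs the proof.
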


\begin{proof} 
We give a uniform proof for the two claims.
Note that by Lemma \ref{lemEpsRest}.\ref{itEpsUnbounded} we can also assume for the conclusion of the first claim that $\varepsilon = 1$.
Let $\spc X$ be a complete separable metric space, let $\spc Y$ be either $\Rb_{\geq 0}$ for the first claim or $[0,1]$ for the second, and let $\spc Z$ be an $\spc X$-pseudofibration of $\spc Y$ with associated functions $\iota \colon \spc{X} \times \spc Y \to \spc{Z}$ and $\pi \colon \spc Z \to \spc Y$.  For the first claim, let $\fnc \colon \spc X \to \overline{\Rb}_{\geq 0}$ be lower semi-continuous; and for the second claim, let $\fnc \colon \spc X \to [0, 1]$ be lower semi-continuous.  By Proposition~\ref{prop-LSCandGraph}, the set
\begin{align*}
\Delta = \{\la x, y \ra \in \spc X \times \spc Y : \fnc (x) \leq y\}
\end{align*}
is closed, hence so is $\Gamma = \iota [\Delta] \subseteq \spc Z$ by Lemma~\ref{lem-IsoImage} because $\iota$ is a closed isometry.  By Urysohn's lemma, which is provable in $\rca$ by~\cite[Lemma~II.7.3]{SimpsonSOSOA}, there is a continuous function $g \colon \spc Z \to [0,1]$ such that $g(z) = 0$ if and only if $z\in \Gamma$.  Define a continuous function $\tilde\fnc \colon \spc Z \to \Rb_{\geq 0}$ by
\begin{align*}
\tilde\fnc(z) = \pi(z) + g(z).
\end{align*}
Every $z \in \Gamma$ is of the form $z = \iota( x, y )$ for some $\la x, y \ra \in \spc X \times \spc Y$.  For such a $z$, we then have that
\begin{align*}
\tilde\fnc(z) = \pi(z) + g(z) = \pi(z) = \pi \iota(  x, y ) = y,
\end{align*}
where the second equality is because $z \in \Gamma$ (and therefore $g(z) = 0$), and the last equality is by Definition~\ref{defPseudofib}.\ref{itPseudofib2}.

Now, fix any $x_0 \in \lscsupp (f)$, let $y_0 = \fnc(x_0)$, and let $z_0 = \iota( x_0, y_0 )$.  By the $\lvp$ for continuous potentials on $\spc Z$, let $z_* \in \spc Z$ be a critical point of $\tilde\fnc$ that satisfies $\tilde\fnc(z_*) \leq \tilde\fnc(z_0) - d_{\spc Z}(z_0, z_*)$.  We first claim that $z_* \in \Gamma$.  To see this, observe that $z_0 \in \Gamma$ by definition and therefore $\tilde\fnc(z_0) = \pi(z_0) = y_0$.  We then have that
\begin{align*}
\pi(z_*) + g(z_*) = \tilde\fnc(z_*) &\leq \tilde\fnc(z_0) - d_{\spc Z}(z_0, z_*)\\
&= \pi(z_0) - d_{\spc Z}(z_0, z_*)\\
&\leq \pi(z_0) - d_{\spc Y}(\pi(z_0), \pi(z_*)) & &\text{by Definition~\ref{defPseudofib}.\ref{itPseudofib1}}\\
&= \pi(z_0) - |\pi(z_0) - \pi(z_*)|\\
&\leq \pi(z_*).
\end{align*}
Therefore we must have that $g(z_*) = 0$, which means that $z_* \in \Gamma$.

As $z_* \in \Gamma$, it must be that $z_* = \iota( x_*, y_* )$ for some $\la x_*, y_* \ra \in \Delta$.  Thus $x_* \in \spc X$ and $y_* \geq \fnc(x_*)$.  In fact, it must be that $y_* = \fnc(x_*)$.  To see this, suppose for a contradiction that $y_* > \fnc(x_*)$, and let $z = \iota(  x_*, \fnc(x_*) )$.  We then have that
\begin{align*}
\tilde\fnc(z_*) - \tilde\fnc(z) = y_* - \fnc(x_*) = d_{\spc Z}(z, z_*),
\end{align*}
where the first equality is because $z_*$ and $z$ are in $\Gamma$, and the second equality is because $\iota$ is an isometry.  Thus $z$ witnesses that $z_*$ is not a critical point of $\tilde\fnc$, which is a contradiction.  Therefore $y_* = \fnc(x_*)$, so $z_* = \iota( x_*, \fnc(x_*) )$.

We now show that $x_*$ is a critical point of the original $\fnc$, which completes the proof.  Let $x \in \spc X$, and suppose that $\fnc(x_*) \geq \fnc(x) + d_{\spc X}(x, x_*)$.  Let $z = \iota( x, \fnc(x) )$.  Then
\begin{align*}
d_{\spc Z}(z_*, z) = d_{\spc X \times \spc Y}(\la x_*, \fnc(x_*) \ra, \la x, \fnc(x) \ra) = \fnc(x_*) - \fnc(x),
\end{align*}
where the first equality is because $\iota$ is an isometry, and the second equality is because $\fnc(x_*) - \fnc(x) \geq d_{\spc X}(x, x_*)$ and we use the $\max$ norm on $\spc X \times \spc Y$.  As $z_*$ and $z$ are in $\Gamma$, we have that
\begin{align*}
\tilde\fnc(z_*) - \tilde\fnc(z) = \fnc(x_*) - \fnc(x) = d_{\spc Z}(z_*, z).
\end{align*}
Therefore $z = z_*$ because $z_*$ is a critical point of $\tilde\fnc$, and since $\iota$ is injective, $x = x_*$.  We have shown that if $x \in \spc X$ satisfies $\fnc(x_*) \geq \fnc(x) + d_{\spc X}(x, x_*)$, then $x = x_*$.  Thus $x_*$ is a critical point of $\fnc$.
\end{proof}

Of course it follows from Lemma \ref{lemmLVPtoFVP} that the $\lvp$ for continuous functions on $\spc X \times \Rb_{\geq 0}$ implies the $\fvp$ on $\spc X$.
However, the notion of pseudofibrations will allow us to replace $\spc X \times \Rb_{\geq 0}$ by a more familiar space.
In what follows, we will mainly deal with $\mathcal C \big ( [0,1] \big ) $.
For given $h\in \mathcal C \big ( [0,1] \big ) $ and a closed interval $I\subseteq [0,1]$, we write $\|h\|_{I}=\sup_{t\in I}|h(t)|$.
By the definition (coding) of $\mathcal C \big ( [0,1] \big ) $, statements of the form $\|h\|_{I}\le r$ are always $\Pi^{0}_{1}$.

\begin{Lemma}[$\rca$]\label{lemmCZOPF}
Let $\spc X$ be a complete separable metric space, and let $\spc Y = [a,b] \cap \Rb$, where $a < b$ are elements of $\overline\Rb$. If $\spc X$ embeds by a closed isometry into $\mathcal C \big ( [0,1] \big ) $, then $\mathcal C \big ( [0,1] \big ) $ is an $\spc X$-pseudofibration of $\spc Y$.
\end{Lemma}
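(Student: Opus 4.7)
The plan is to build $\iota$ and $\pi$ from the given closed isometric embedding $\varphi\colon \spc X \to \mathcal C([0,1])$ by using three subintervals of $[0,1]$: the first third stores $\varphi(x)$, the last third stores $y$, and the middle third interpolates linearly. Concretely, set
\[
\iota(x,y)(t)=\begin{cases}
\varphi(x)(3t) & t\in[0,\tfrac{1}{3}],\\
(2-3t)\varphi(x)(1) + (3t-1)y & t\in[\tfrac{1}{3},\tfrac{2}{3}],\\
y & t\in[\tfrac{2}{3},1],
\end{cases}
\]
and $\pi(z)=\max\{a,\min\{b,z(1)\}\}$, with the natural conventions when $a=-\infty$ or $b=\infty$.

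Then I would verify the four required properties by direct calculation. For the isometry of $\iota$, the sup norm of $\iota(x,y)-\iota(x',y')$ splits across the three subintervals: on $[0,\tfrac 13]$ it equals $\|\varphi(x)-\varphi(x')\|_\infty = d_{\spc X}(x,x')$; on $[\tfrac 23,1]$ it equals $|y-y'|$; and on $[\tfrac 13,\tfrac 23]$ the difference is a convex combination of $\varphi(x)(1)-\varphi(x')(1)$ and $y-y'$, hence is bounded by the maximum of these. This matches the max metric on $\spc X\times\spc Y$. For $\pi$, the evaluation $z\mapsto z(1)$ is Lipschitz-$1$ from $\mathcal C([0,1])$ to $\Rb$ and clamping to $[a,b]\cap\Rb$ is also Lipschitz-$1$ and continuous, giving Definition~\ref{defPseudofib}(\ref{itPseudofib1}); since $\iota(x,y)(1)=y\in\spc Y$, clamping acts as the identity there, so $\pi\iota(x,y)=y$, giving Definition~\ref{defPseudofib}(\ref{itPseudofib2}).

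The main obstacle is formalizing the construction inside $\rca$ and verifying that $\iota[\spc X\times\spc Y]$ is closed. For the codes, I would apply Lemma~\ref{lem-code-for-conti1} to $\iota$: the identity function serves as a modulus of uniform continuity by the isometry calculation above, and the values of $\iota$ at dense points $(a,y)\in X\times(\Qb\cap\spc Y)$ can be read off from the given code for $\varphi$ via the piecewise formula. The code for $\pi$ arises as the composition of evaluation at $t=1$ (a standard continuous map $\mathcal C([0,1])\to\Rb$) with clamping to $[a,b]\cap\Rb$. For closedness of $\iota[\spc X\times\spc Y]$, I would apply Lemma~\ref{lem-code-for-open} to a $\Sigma^{0}_{1}$ description of the complement: $z\in\mathcal C([0,1])$ fails to lie in the image iff at least one of the following open conditions holds---$z$ is not linear on $[\tfrac 13,\tfrac 23]$ between $z(\tfrac 13)$ and $z(1)$; $z$ is not constant on $[\tfrac 23,1]$; $z(1)\notin\spc Y$ (open because $\spc Y=[a,b]\cap\Rb$ is closed in $\Rb$); or the rescaled function $u\mapsto z(u/3)$ lies in the open complement of $\varphi[\spc X]$ (open because $\varphi$ is a closed isometry and the rescaling is a continuous self-map of $\mathcal C([0,1])$).
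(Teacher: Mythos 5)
Your construction is correct and follows essentially the same strategy as the paper's proof: glue a rescaled copy of $\varphi(x)$ and a constant representation of $y$ with a linear interpolation in between, let $\pi$ clamp a point evaluation, verify the isometry interval by interval, and show the image is closed by exhibiting a $\Pi^0_1$ description (equivalently, a $\Sigma^0_1$ complement) via Lemma~\ref{lem-code-for-open}. The only difference is cosmetic: the paper uses two subintervals and reads $y$ off at $t=0$, while you use three and read it off at $t=1$.
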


\begin{proof}
Let $\spc Z = \mathcal C \big ( [0,1] \big )$ and let $f\colon \spc X \to \spc Z$ be a closed isometry.
Let us write $f_x$ instead of $f(x)$.
Define $\iota \colon \spc X \times \spc Y \to \spc Z$ by
\[
\big ( \iota (x,y) \big ) (t)=
\begin{cases}
2t f_x(0)+(1-2t) y & \text{if $t< \nicefrac 12$,}\\
f_x   ( 2 t - 1  ) & \text{if $t\geq \nicefrac 12$,}
\end{cases}
\]
and define $\pi \colon \spc Z \to \spc Y$ by
\[\pi(g) =
\begin{cases}
a & \text{if $g(0)<a$,}\\
b & \text{if $g(0)>b $,}\\
g(0) & \text{otherwise.}\\
\end{cases}
\]
Intuitively, $\iota (x,y)$ represents $y$ by its value on $0$ and represents $f(x)$ by its values on $[\nicefrac 12,1]$.
It is easy to check that a code for $\pi$ can be constructed using Lemma~\ref{lem-code-for-conti1}.
We may also use Lemma~\ref{lem-code-for-conti1} to construct a code for $\iota$.
To do this, let $X$ and $Y$ be the dense sets of $\spc X$ and $\spc Y$, respectively.  It is straightforward to produce the sequence $(\la \la a,q \ra, \iota(a,q) \ra : \la a,q \ra \in X \times Y)$ and observe that
\begin{align}
\label{IotaIso} d_{\spc X \times \spc Y}(\la a_0 ,q_0 \ra, \la a_1, q_1 \ra) = \max(d_{\spc X}(a_0, a_1), d_{\spc Y}(q_0, q_1)) = d_{\spc Z}(\iota(a_0, q_0), \iota(a_1, q_1)).
\end{align}
Thus the hypotheses of Lemma~\ref{lem-code-for-conti1} are satisfied, so there exists a code for $\iota$.
Equation~\eqref{IotaIso} implies that $\iota$ is an isometry. To see that $\iota$ is closed, consider an $h\in\mathcal C\big ( [0,1] \big )$, put $h_{\flat}(t)=(2t)h(1/2)-(1-2t)h(0)$, and put $h_{\sharp}(t)=h(\nicefrac t2 + \nicefrac 12)$.
Then $h \in \iota[\spc X \times \spc Y]$ if and only if $\|h-h_{\flat}\|_{[0,1/2]}=0$ (i.e., $h$ is linear on $[0,1/2]$), $h(0)\in\spc Y$, and $h_\sharp \in f[\spc X]$, which is a $\Pi^{0}_{1}$ condition because $f[\spc X]$ is closed by assumption.
Thus, $\iota[\spc X \times \spc Y]$ is closed by Lemma~\ref{lem-code-for-open}.  It is then easy to check that $\iota$ and $\pi$ make $\spc Z$ an $\spc X$-pseudofibration of $\spc Y$.
\end{proof}

In order to apply Lemma \ref{lemmCZOPF}, we need to show that many of the spaces we are interested in isometrically embed into $\mathcal C \big ( [0,1] \big )$. Let us begin with the unit interval.

\begin{Lemma}[$\rca$]\label{lemUnitEmbeds}
There exists a closed isometry $f\colon [0,1] \to \mathcal C \big ( [0,1] \big )$.
\end{Lemma}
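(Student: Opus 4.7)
The plan is to define the embedding by mapping each $x \in [0,1]$ to the constant function $f_x \in \mathcal{C}\big([0,1]\big)$ given by $f_x(t) = x$ for all $t \in [0,1]$. Since $\mathcal{C}\big([0,1]\big)$ carries the sup-norm metric, we immediately have
\[
d_{\mathcal{C}([0,1])}(f_x, f_y) = \sup_{t \in [0,1]}|x - y| = |x - y|,
\]
so $f$ is an isometry by construction.

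First I would produce an official code for $f$ by applying Lemma~\ref{lem-code-for-conti1}. The dense set of $[0,1]$ is $[0,1] \cap \Qb$, and for each rational $q$ the constant function $f_q$ is piecewise linear with rational breakpoints $\la 0,q\ra, \la 1,q\ra$, so $f_q$ lies in the official dense set of $\mathcal{C}\big([0,1]\big)$. The sequence $(\la q, f_q\ra : q \in [0,1]\cap\Qb)$ exists in $\rca$, and the identity function serves as the modulus $h$ required by Lemma~\ref{lem-code-for-conti1}. This yields a code for a continuous $f \colon [0,1] \to \mathcal{C}\big([0,1]\big)$ that agrees with $q \mapsto f_q$ on the dense set, and a standard density argument shows $f$ is then an isometry on all of $[0,1]$.

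The main step will be verifying that $f\big[[0,1]\big]$ is closed, which I plan to do by exhibiting a $\Sigma^0_1$ definition of its complement and invoking Lemma~\ref{lem-code-for-open}. A function $h \in \mathcal{C}\big([0,1]\big)$ fails to lie in the image iff either $h(0) \notin [0,1]$, or $h$ is non-constant. The first disjunct is $h(0) < 0 \vee h(0) > 1$, which is $\Sigma^0_1$ since strict real inequalities are $\Sigma^0_1$. The second disjunct, $h$ non-constant, I would express as
\[
\exists t \in [0,1]\cap\Qb\ \exists q_1,q_2 \in \Qb\bl q_1 < q_2 \andd \big((h(0)<q_1 \andd h(t)>q_2) \orr (h(t)<q_1 \andd h(0)>q_2)\big)\br,
\]
which is also $\Sigma^0_1$ and invariant under $=_{\mathcal{C}([0,1])}$. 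Lemma~\ref{lem-code-for-open} then supplies a code for the complement as an open set, so $f\big[[0,1]\big]$ is closed.

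I do not foresee a serious obstacle; the only delicate point is writing down the non-constancy condition in a genuinely $\Sigma^0_1$ fashion rather than as a $\Delta^0_0$ statement with strict real inequalities directly, which is why I would phrase separation by means of witnessing rationals $q_1 < q_2$ between the two values.
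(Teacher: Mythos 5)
Your proposal is correct and follows essentially the same route as the paper: the same constant-function embedding, the same appeal to Lemma~\ref{lem-code-for-conti1} for the code, and the same use of Lemma~\ref{lem-code-for-open} for closedness (the paper phrases the image as the $\Pi^0_1$ condition $\|h-\mathcal I_{h(0)}\|_{[0,1]}=0$, which is just the complement of your explicit $\Sigma^0_1$ non-constancy condition). Your extra care in writing out the $\Sigma^0_1$ definition, including the clause $h(0)\in[0,1]$, is sound and if anything slightly more explicit than the paper's.
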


\begin{proof}
Let $\spc X = [0,1]$ and $\spc Y = \mathcal C \big ( [0,1] \big )$.
For $r\in[0,1]$, write $\mathcal I_{r}$ for the constant function with value $r$.
Define $f \colon \spc X \to \spc Y$ by $f(x)= \mathcal I_{x}$.
It is obvious that $f$ is an isometry and thus a code for $f$ exists by arguing in the style of the proof of Lemma~\ref{lemmCZOPF} and appealing to Lemma~\ref{lem-code-for-conti1}.
We may observe that $h\in f[\spc X]$ if and only if $\|h-\mathcal I_{h(0)}\|_{[0,1]}=0$, which is a $\Pi^0_1$ condition, thus $f[\spc X]$ is closed by Lemma~\ref{lem-code-for-open}.
\end{proof}

Next, let us see that there is also a closed isometry from the Baire space into $\mathcal C \big ( [0,1] \big )$.

\begin{Definition}\label{DefBaireEmbed}
If $I = [a,b] $ is an interval with $0\leq a < b \leq 1$, define $\hat I \colon [0,1] \to \Rb$ to be the piecewise linear function with $\hat I(0) = \hat I(a) = \hat I(b) = \hat I(1) = 0$, $\hat I \big ( \frac{a+b}2 \big ) = 1$, and $\hat I$ linear elsewhere.
For $n \in \Nb$, define $I_n = [1-2^{-n},1-2^{-(n+1)}]$ and for $m,n \in \Nb$ define
\[J_{n}^m = \big [ 1-2^{-(n+1)} - 2^{-(n+m+1)} , 1-2^{-(n+1)} - 2^{-(n+m+2)} \big ].\]
Let $\spc X$ be the Baire space and $\spc Y$ be $\mathcal C \big ( [0,1] \big )$.
Then, define $F \colon \spc X \to \spc Y$ by
\[F( {x} ) = \sum_{n \in \Nb} 2^{-n} \hat J_ {{n } }^{{x}(n)}.\]
\end{Definition}

\begin{Lemma}[$\rca$]\label{lemBaireEmbeds}
The function $F$ of Definition \ref{DefBaireEmbed} is a closed isometry from the Baire space to ${\mathcal C} \big ( [ 0, 1 ] \big ) $.
\end{Lemma}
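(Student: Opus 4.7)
My plan is to verify in turn that $F$ is an isometry, that $F$ admits a $\rca$-code as a continuous function, and that $F[\spc X]$ is a closed subset of $\mathcal C\big([0,1]\big)$.  The isometric property is central and underlies the other two.

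The key geometric fact is that the supports of the bumps $\hat J_n^m$ are pairwise disjoint: distinct $I_n$'s overlap only at endpoints, distinct $J_n^m$'s within a single $I_n$ overlap only at endpoints, and every bump vanishes at the endpoints of its support.  Hence at every $t \in [0,1]$ at most one term of $\sum_n 2^{-n}\hat J_n^{x(n)}$ is non-zero, so $F(x)$ is pointwise defined and continuous on $[0,1]$.  Given $x \neq y$ with first disagreement at index $n_0$, the functions $F(x)$ and $F(y)$ coincide on $I_k$ for $k < n_0$; on $I_{n_0}$ they place bumps of equal height $2^{-n_0}$ on the disjoint sub-intervals $J_{n_0}^{x(n_0)}$ and $J_{n_0}^{y(n_0)}$, so at the peak of either bump the other vanishes, contributing a sup-norm difference of exactly $2^{-n_0}$; and on $I_k$ with $k > n_0$ the pointwise difference is bounded by $2^{-k} < 2^{-n_0}$.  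Therefore $\|F(x) - F(y)\|_\infty = 2^{-n_0} = d_{\spc X}(x, y)$.

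To code $F$ in $\rca$, I will invoke Lemma~\ref{lem-code-for-conti1} with $\MC{U} = \spc X$, letting $(a_i)_i$ enumerate the eventually-zero sequences (the dense set of the Baire space) and representing $y_i = F(a_i) \in \mathcal C\big([0,1]\big)$ by the Cauchy sequence of piecewise linear truncations $\bigl(\sum_{n < k} 2^{-n} \hat J_n^{a_i(n)}\bigr)_{k}$, which converges at rate $2^{-k}$.  Since $F$ is an isometry on the dense set, the identity $h(n) = n$ serves as the modulus required by the lemma.

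The most delicate step is showing that $F[\spc X]$ is closed; for this I will apply Lemma~\ref{lem-code-for-open} to a $\Sigma^0_1$ predicate describing the complement.  The key observation is a min--max identity: for every $g \in \mathcal C\big([0,1]\big)$, the decomposition $\|F(x) - g\|_\infty = \sup_n \|g - 2^{-n}\hat J_n^{x(n)}\|_{I_n}$ together with the independence of the coordinates $x(n)$ for different $n$ gives
\[ d\bigl(g, F[\spc X]\bigr) = \sup_n \inf_{m \in \Nb} \|g - 2^{-n}\hat J_n^m\|_{I_n}. \]
I will enumerate $\ball{g}{r}$ into the complement whenever $g$ is in the dense set, $r \in \Qb_{>0}$, and there exists $n$ with $\inf_m \|g - 2^{-n}\hat J_n^m\|_{I_n} > r$; this inner infimum is computable in $\rca$ because, for all $m$ exceeding a threshold determined by the finitely many breakpoints of $g$ falling in $I_n$, the interval $J_n^m$ lies where $g$ is linear and the value stabilises at a directly computable quantity.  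The enumeration criterion is thus genuinely $\Sigma^0_1$.  The main obstacle is verifying correctness, in particular that every $h$ not in the image is captured by some enumerated ball: the uniqueness of the nearest bump (since $\|2^{-n}\hat J_n^m - 2^{-n}\hat J_n^{m'}\|_{I_n} = 2^{-n}$ for $m \neq m'$) allows a candidate sequence $x(n) = m^*_n$ to be identified from $h$, and if $\inf_m \|h - 2^{-n}\hat J_n^m\|_{I_n} = 0$ for every $n$ then $h = F((m^*_n)_n) \in F[\spc X]$.  Consequently, if $h \notin F[\spc X]$ the infimum must exceed some positive rational on some $I_n$, and approximating $h$ by a piecewise linear $g$ yields an enumerated ball containing $h$.
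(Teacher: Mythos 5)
Your proof is correct in substance, and its first two steps (the disjoint-support computation establishing the isometry, and the appeal to Lemma~\ref{lem-code-for-conti1} with modulus $h(n)=n$) coincide with the paper's. Where you genuinely diverge is the closedness of the image. The paper characterizes \emph{membership} in $F[\Nb^{\Nb}]$ by a $\Pi^0_1$ condition --- $h\in F[\Nb^{\Nb}]$ iff $\|h\|_{I_{n}}=2^{-n}$ for every $n$ and, for every $n,m$, either $\|h\|_{J_{n}^{m}}=0$ or $\|h-2^{-n}\hat J_{n}^{m}\|_{I_{n}}=0$ --- and then invokes Lemma~\ref{lem-code-for-open}; since assertions of the form $\|h\|_I\le r$ are $\Pi^0_1$ by the coding of $\mathcal C\big([0,1]\big)$, no infima over $m$ ever need to be computed. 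You instead enumerate the complement directly, which forces you to decide, for each piecewise linear rational $g$ and each $n$, whether $\inf_m\|g-2^{-n}\hat J_n^m\|_{I_n}>r$. This works, but two points need tightening. First, the min--max identity for $d(g,F[\spc X])$ involves suprema and infima that $\rca$ need not prove exist; fortunately you use it only as motivation, and the two correctness directions (each enumerated ball misses $F[\spc X]$; and if $h\notin F[\spc X]$ then for some $n$ and some $c\in\Qb_{>0}$ one has $\|h-2^{-n}\hat J_n^m\|_{I_n}\ge c$ for all $m$, else the unique near-minimizers $m^*_n$ reassemble $h$ as $F((m^*_n)_n)$) can be run with explicit inequalities. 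Second, the values $\|g-2^{-n}\hat J_n^m\|_{I_n}$ do \emph{not} literally stabilise once $m$ passes the threshold where $g$ is linear near the right endpoint of $I_n$: the bump keeps moving and shrinking, so these values merely converge. The infimum is still computable because the limit and a modulus of convergence are computable from the rational data of $g$, so your enumeration criterion is indeed $\Sigma^0_1$, but the justification as written is slightly off. On balance the paper's $\Pi^0_1$ characterization is the cheaper route; your enumeration is more explicit (it hands you a code for the open complement directly) at the cost of this extra quantitative bookkeeping.
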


\begin{proof}
A code for $F$ exists by checking that $F$ is an isometry, arguing as in the proof of Lemma~\ref{lemmCZOPF}, and appealing to Lemma~\ref{lem-code-for-conti1}.
For $h\in {\mathcal C} \big ( [ 0, 1 ] \big ) $, $h\in F[\Nb^{\Nb}]$ if and only if for every $n\in\Nb$, $\|h\|_{I_{n}}=2^{-n}$, and for every $n,m\in\Nb$, either $\|h\|_{J_{n}^{m}}=0$ or $\|h-2^{-n}\hat J_{n}^{m}\|_{I_{n}}=0$, which is a $\Pi^{0}_{1}$ statement.
It is easy to check the ``only if'' direction, so we check the ``if'' direction. For a given $h\in {\mathcal C} \big ( [ 0, 1 ] \big )$ which satisfies the assumption, define $h_{\Nb}:\Nb\to\Nb$ by letting $h_{\Nb}(n)$ be the unique $m\in\Nb$ such that $\|h\|_{J_{n}^{m}}\neq 0$ (which can be found effectively).  Then $F(h_{\Nb})=h$.  Thus the image of $F$ is closed by Lemma~\ref{lem-code-for-open}.
\end{proof}

\begin{Proposition}[$\rca$]\label{propLVPReverse}\
\begin{enumerate}

\item \label{itLVPReverseACA} The $\lvp$ for bounded, continuous potentials on $[0,1] \times  [0,1]$ with the max metric implies $\aca$.

\item \label{itLVPReversePica} The $\lvp$ for bounded, continuous potentials on $\mathcal C \big ( [0,1] \big )$ implies $\pica$.

\end{enumerate}
\end{Proposition}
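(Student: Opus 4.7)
The plan is to derive each reversal by routing through the pseudofibration machinery of Section~\ref{secLVP}: in each case, I would exhibit the given space as an appropriate pseudofibration of a simpler space, apply Lemma~\ref{lemmLVPtoFVP} to get the $\fvp$ on that simpler space, and then invoke a previously-established reversal. The only subtlety is that Lemma~\ref{lemmLVPtoFVP} needs the $\lvp$ for possibly unbounded continuous potentials, while the hypothesis gives us only the bounded version; this gap is bridged by Lemma~\ref{lemmBoundedLVP}.

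For part~(\ref{itLVPReverseACA}), I would first apply Lemma~\ref{lemmBoundedLVP} to upgrade the hypothesis to the $\lvp$ for (possibly unbounded) continuous potentials on $[0,1]\times[0,1]$. Then I would observe that $[0,1]\times[0,1]$ with the max metric is trivially a $[0,1]$-pseudofibration of $[0,1]$: take $\iota(\la x, y\ra) = \la x, y\ra$ and $\pi(\la x, y\ra) = y$. Both conditions of Definition~\ref{defPseudofib} are immediate, since $|y - y'| \leq \max(|x-x'|, |y-y'|)$ and $\pi \circ \iota$ is the second projection. Lemma~\ref{lemmLVPtoFVP}(2) then yields the $1$-$\fvp$ on $[0,1]$, and since $[0,1]$ is a closed ball in $\Rb$, Lemma~\ref{lemEpsRest}(3)(d) promotes this to the $\fvp$ for bounded lower semi-continuous potentials on $[0,1]$ (for any $\varepsilon > 0$). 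The potential constructed in the proof of Proposition~\ref{PropCompactFVP} is honestly coded (hence lower semi-continuous) and bounded by $2$, so this consequence suffices to run that argument and delivers $\aca$.

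For part~(\ref{itLVPReversePica}), I would again first apply Lemma~\ref{lemmBoundedLVP} to promote the hypothesis to the $\lvp$ for continuous potentials on $\mathcal{C}\bl[0,1]\br$. Next, Lemma~\ref{lemBaireEmbeds} provides a closed isometry from the Baire space $\Nb^{\Nb}$ into $\mathcal{C}\bl[0,1]\br$, and Lemma~\ref{lemmCZOPF} (applied with $\spc X = \Nb^{\Nb}$ and $\spc Y = [0, \infty) \cap \Rb = \Rb_{\geq 0}$, i.e., taking $a = 0$ and $b = \infty$ in its statement) then shows that $\mathcal{C}\bl[0,1]\br$ is an $\Nb^{\Nb}$-pseudofibration of $\Rb_{\geq 0}$. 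Lemma~\ref{lemmLVPtoFVP}(1) now delivers the $\fvp$ on the Baire space, and Proposition~\ref{prop:critical-vs-Pi11CA-RCA} concludes $\pica$.

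No single step is technically demanding; the main thing to check is that the pseudofibration axioms are correctly satisfied and that boundedness is properly tracked through the reduction. In particular, the hypothesis gives only \emph{bounded} continuous potentials, but the continuous potential $\tilde\fnc = \pi + g$ built in the proof of Lemma~\ref{lemmLVPtoFVP} can easily be unbounded when $\spc Y = \Rb_{\geq 0}$, so Lemma~\ref{lemmBoundedLVP} is essential in part~(\ref{itLVPReversePica}); in part~(\ref{itLVPReverseACA}), since $\spc Y = [0,1]$ is already bounded, Lemma~\ref{lemmBoundedLVP} is used just to make the pipeline uniform.
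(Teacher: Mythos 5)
Your proof is correct and follows essentially the same route as the paper's: Lemma~\ref{lemmBoundedLVP} to drop boundedness, the pseudofibration machinery (the trivial product pseudofibration for part~(1), Lemmas~\ref{lemmCZOPF} and~\ref{lemBaireEmbeds} for part~(2)) fed into Lemma~\ref{lemmLVPtoFVP}, and then the prior reversals of Proposition~\ref{PropCompactFVP} (via Lemma~\ref{lemEpsRest}) and Proposition~\ref{prop:critical-vs-Pi11CA-RCA}. The only, harmless, divergence is in part~(2), where you take $\spc Y=\Rb_{\geq 0}$ and clause~(1) of Lemma~\ref{lemmLVPtoFVP} to reach the full $\fvp$ on the Baire space directly, whereas the paper takes $\spc Y=[0,1]$ and clause~(2), obtaining the $1$-$\fvp$ and then passing through Lemma~\ref{lemEpsRest}; both are valid, since Lemma~\ref{lemmCZOPF} explicitly allows $b=\infty$, and you correctly note that your choice makes Lemma~\ref{lemmBoundedLVP} genuinely necessary because $\tilde\fnc=\pi+g$ is then unbounded.
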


\begin{proof}
Note that in view of Lemma \ref{lemmBoundedLVP} the bounded $\lvp$ implies the full $\lvp$, so we may remove `bounded' from both items.
For the first item, we note by Proposition \ref{PropCompactFVP} that the $\fvp$ on $[0,1]$ implies $\aca$.
Moreover, by Lemma \ref{lemEpsRest}.\ref{itBallRn}, since $[0,1]$ is a closed ball in $\Rb^1$, we may replace the $\fvp$ by the $1$-$\fvp$.
Since $[0,1] \times [0,1]$ is clearly a $[0,1]$-pseudofibration of $[0,1]$, by Lemma \ref{lemmLVPtoFVP} we have that the $\lvp$ for bounded, continuous potentials on $[0,1] \times [0,1]$ implies $\aca$ as well.

For the second, let $\spc X$ be the Baire space and $\spc Z$ be ${\mathcal C} \big ( [0,1] \big )$. By Lemma \ref{lemEpsRest}.\ref{itBaireSpace} and Proposition \ref{prop:critical-vs-Pi11CA-RCA}, the $1$-$\fvp$ on $\spc X$ implies $\pica$. By Lemma \ref{lemBaireEmbeds}, $\spc X$ embeds by a closed isometry into $\spc Z$, so that by Lemma \ref{lemmCZOPF}, $\spc Z$ is an $\spc X$-pseudofibration of $[0,1]$. Hence, reasoning as above, the $\lvp$ for bounded, continuous functions on $\spc Z$ implies the $1$-$\fvp$ on $\spc X$, from which we obtain $\pica$.
\end{proof}

\begin{remark}\label{RemUnitBall}
Let $\spc B$ be the closed unit ball of $\mathcal C \big ( [0,1] \big )$. It is easy to see that the isometries used in Lemmas \ref{lemUnitEmbeds} and \ref{lemBaireEmbeds} map into $\spc B$, and the proof of Lemma \ref{lemmCZOPF} can be modified to show that $\spc B$ is an $\spc X$-pseudofibration of $[0,1]$. With this we may replace $\mathcal C \big ( [0,1] \big )$ by $\spc B$ in Proposition \ref{propLVPReverse}.\ref{itLVPReversePica}.
\end{remark}

\section{Conclusion}\label{secConc}

We have considered formalizations of Ekeland's variational principle with various natural restrictions and have shown that they are equivalent to well-known theories of second-order arithmetic. In this final section we synthesize these results to produce the definitive versions of our main results.

There are three main weakenings of the $\lvp$ we considered: the free, rather than localized, statement; the case where $\spc X$ is compact; and the case where $f$ is continuous. When the $\lvp$ is weakened in the three ways simultaneously, we obtain a statement equivalent to $\wkl$.

\begin{Theorem}
Over $\rca$, the following are equivalent:

\begin{enumerate}[label=(\alph*)]

\item \label{ItWKL} $\wkl$;

\item the $\fvp$ for continuous $\fnc$ and compact $\spc X$;

\item \label{ItCantor} the $\fvp$ for continuous $\fnc$ on the Cantor space or on $[0,1]$.

\end{enumerate}

\end{Theorem}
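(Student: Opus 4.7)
The plan is to close a three-way cycle of implications, all of whose ingredients are already in hand from earlier in the paper. The only genuine content is in the two nontrivial directions, which have already been proved as Proposition~\ref{propWKLtoFVP} and Proposition~\ref{propFVPtoWKL}; the remaining implications are bookkeeping observations about which spaces are compact.

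For \ref{ItWKL}$\Imp$\ref{ItCantor}, I would first note that \ref{ItWKL}$\Imp$(b) is exactly Proposition~\ref{propWKLtoFVP}: under $\wkl$, any continuous $f \colon \spc X \to \overline{\Rb}_{\geq 0}$ on a compact complete separable metric space attains its infimum, and any minimizer is a critical point by Lemma~\ref{lemmCritApproxMin} (hence an $\varepsilon$-critical point for every $\varepsilon > 0$). For (b)$\Imp$\ref{ItCantor}, it suffices to observe that both the Cantor space $2^\Nb$ and the unit interval $[0,1]$ are compact in the sense of Definition~\ref{def-compact} provably in $\rca$: for $2^\Nb$, the finite strings of length $n$ (padded with $0$) give an explicit $2^{-n}$-net, and for $[0,1]$ the dyadic rationals $k/2^n$ for $0 \leq k \leq 2^n$ serve the same role. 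Both sequences of nets are straightforwardly definable, so applying (b) to these two spaces yields \ref{ItCantor} with no additional work.

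For the reversal \ref{ItCantor}$\Imp$\ref{ItWKL}, I would appeal directly to Proposition~\ref{propFVPtoWKL}. Note that the hypothesis of \ref{ItCantor} (the $\fvp$ for continuous $f$) is \emph{a priori} stronger than the hypothesis of Proposition~\ref{propFVPtoWKL} (the $\fvp$ for continuous \emph{bounded} $f$), since the bounded version is a special case of the unrestricted one. Hence \ref{ItCantor} for either space implies the bounded form considered in Proposition~\ref{propFVPtoWKL}, which already yields $\wkl$ over $\rca$.

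There is no real obstacle here; the work was done in Propositions~\ref{propWKLtoFVP} and \ref{propFVPtoWKL}, and the theorem is essentially a consolidation. The only subtlety worth flagging in writing is that one should make explicit the trivial reduction from the unbounded to the bounded form used in the reversal, and that $\rca$ verifies compactness of $2^\Nb$ and $[0,1]$ without needing $\wkl$ (which is only required to pass from compactness to Heine--Borel compactness, as in Theorem~\ref{thm-WKLCompact}, and is not needed for what we do here).
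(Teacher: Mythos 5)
Your proposal is correct and follows exactly the paper's route: the forward direction is Proposition~\ref{propWKLtoFVP}, the reversal is Proposition~\ref{propFVPtoWKL}, and the cycle is closed by noting that the Cantor space and $[0,1]$ are compact (provably in $\rca$) and that the unrestricted $\fvp$ trivially implies its bounded restriction. The paper's own proof is just a terser version of the same consolidation.
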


\begin{proof}
That \ref{ItWKL} implies the other items is Proposition \ref{propWKLtoFVP}, while \ref{ItCantor} implies \ref{ItWKL} by Proposition \ref{propFVPtoWKL}.
\end{proof}

If we choose two, but not three, of the weakenings, we instead obtain statements equivalent to $\aca$. Moreover, if we impose only one of the three weakenings, we typically obtain statements that are not provable in $\aca$.  The only exception to this is the case where $\spc X$ is compact.

\begin{Theorem}\label{TheoCritEquivACA}
Over $\rca$, the following are equivalent:

\begin{enumerate}[label=(\alph*)]

\item \label{ItACA} $\aca$;

\item\label{ItContFVP} the $\fvp$ for continuous $\fnc$;

\item \label{ItCompactFVP} the $\fvp$ for compact $\spc X$;

\item\label{ItCompLVP} the $\lvp$ for compact $\spc X$;

\item\label{ItCompContLVP} the $\lvp$ for continuous $f$ and compact $\spc X$.

\end{enumerate}
Moreover in item \ref{ItContFVP} we may take $\spc X$ to be the Baire space, in items \ref{ItCompactFVP} and \ref{ItCompLVP} we may take $\spc X = [0,1]$, and in \ref{ItCompLVP} and \ref{ItCompContLVP} we may take $\spc X = [0,1] \times [0,1]$ with the max metric.
\end{Theorem}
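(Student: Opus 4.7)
The plan is to deduce this theorem by assembling results from Sections \ref{SecFormalEVP}, \ref{secCPAP}, and \ref{secReversals}; essentially all of the mathematical work has already been done, so what remains is bookkeeping to align each statement with the corresponding previously proved implication.

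For the forward direction, $\aca$ proves (b) directly by Theorem \ref{thm-ContinuousCritPt}, and it proves (c) by Theorem \ref{thm-CritPtsExist}\ref{CritPtsExistTwo}. Applying Lemma \ref{lemmFVPtoLVP} on a fixed compact space $\spc X$ transports (c) to (d), and (e) follows from (d) trivially since every continuous potential is a potential in the sense of Definition \ref{def-lsc}. These per-space implications preserve whichever specific $\spc X$ is required by the ``moreover'' clause: applying Lemma \ref{lemmFVPtoLVP} with $\spc X = [0,1]$ or with $\spc X = [0,1] \times [0,1]$ delivers (d) on the same space.

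For the reversals, I would treat each item using its matching proposition from Section \ref{secReversals}. Item (b), restricted to the Baire space, yields $\aca$ by Proposition \ref{prop-FVPContinuousReversal}. Item (c), restricted to $\spc X = [0,1]$, yields $\aca$ by Proposition \ref{PropCompactFVP}: the witness $f$ constructed there is an honestly-coded potential on $[0,1]$, and (c) applies to all potentials on this compact space, so in particular to that $f$. The $\lvp$ trivially implies the $\fvp$ on the same space (just discard the localization clause $\varepsilon d(x_0,x_\ast) \leq f(x_0) - f(x_\ast)$), so (d) with $\spc X = [0,1]$ gives (c) with $\spc X = [0,1]$ and hence $\aca$; the same observation handles the $\spc X = [0,1] \times [0,1]$ formulation. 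Finally, (e) with $\spc X = [0,1] \times [0,1]$ implies $\aca$ by Proposition \ref{propLVPReverse}\ref{itLVPReverseACA}; to invoke that proposition I would just note that a continuous potential on the compact product $[0,1] \times [0,1]$ is automatically bounded, matching the ``bounded continuous'' hypothesis.

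The only mild subtlety, and therefore the main (if minor) obstacle, is tracking the space and regularity restrictions across each implication so that the forms line up exactly with the preceding propositions; for example, one should avoid trying to derive $\aca$ from (e) on an arbitrary compact space, since the pseudofibration argument underlying Proposition \ref{propLVPReverse}\ref{itLVPReverseACA} specifically exploits the product structure $[0,1] \times [0,1]$.
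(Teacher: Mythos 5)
Your proposal assembles exactly the same ingredients as the paper's proof (Theorems \ref{thm-ContinuousCritPt} and \ref{thm-CritPtsExist}, Lemma \ref{lemmFVPtoLVP}, and Propositions \ref{prop-FVPContinuousReversal}, \ref{PropCompactFVP}, and \ref{propLVPReverse}); the only structural difference is that the paper closes a single cycle \ref{ItCompactFVP}$\Rightarrow$\ref{ItCompLVP}$\Rightarrow$\ref{ItCompContLVP}$\Rightarrow\aca$, whereas you run each reversal back to $\aca$ separately, which is equally valid.

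Two small corrections are needed. First, your justification for invoking Proposition \ref{propLVPReverse}.\ref{itLVPReverseACA} --- that a continuous potential on the compact space $[0,1]\times[0,1]$ is ``automatically bounded'' --- is not provable in $\rca$: boundedness of continuous functions on $[0,1]$ is itself equivalent to $\wkl$. Fortunately the claim is also unnecessary, since the implication you need goes the easy way: the $\lvp$ for \emph{all} continuous potentials on $[0,1]\times[0,1]$ trivially implies the $\lvp$ for the subclass of bounded continuous potentials, which is all that Proposition \ref{propLVPReverse}.\ref{itLVPReverseACA} requires (the paper makes the same point via Lemma \ref{lemmBoundedLVP}). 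Second, your treatment of the $[0,1]\times[0,1]$ instance of \ref{ItCompLVP} by ``discarding the localization clause'' lands you at the $\fvp$ on $[0,1]\times[0,1]$, for which no reversal is available among the cited propositions (Proposition \ref{PropCompactFVP} is specifically about $[0,1]$). The correct route is to restrict \ref{ItCompLVP} on $[0,1]\times[0,1]$ to continuous potentials, obtaining \ref{ItCompContLVP} on that space, and then apply Proposition \ref{propLVPReverse}.\ref{itLVPReverseACA}. Both fixes are one-liners, so the proof is essentially complete.
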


\begin{proof}
The equivalence between \ref{ItACA} and \ref{ItContFVP} is given by Theorem \ref{thm-ContinuousCritPt} and Proposition~\ref{prop-FVPContinuousReversal}, while the equivalence between \ref{ItACA} and \ref{ItCompactFVP} is given by Theorem \ref{thm-CritPtsExist} and Proposition \ref{PropCompactFVP}. By Lemma \ref{lemmFVPtoLVP}, \ref{ItCompactFVP} implies \ref{ItCompLVP}. Clearly, \ref{ItCompLVP} implies \ref{ItCompContLVP}, which by Proposition \ref{propLVPReverse} implies \ref{ItCompactFVP}.
\end{proof}

Other than compactness, if we impose at most one weakening to the $\lvp$ we obtain statements equivalent to $\pica$.

\begin{Theorem}\label{TheoCritEquivPiCA}
The following are equivalent:

\begin{enumerate}[label=(\alph*)]

\item\label{itPica} $\pica$;

\item\label{itFVP} the $\fvp$;

\item\label{itContLVP} the $\lvp$ for continuous $f$;

\item\label{itLVP} the $\lvp$.

\end{enumerate}
Moreover, in \ref{itFVP} and \ref{itLVP} we may take $\spc X$ to be the Baire space, and in any of \ref{itFVP}-\ref{itLVP} we may take $\spc X$ to be (the closed unit ball of) $\mathcal C \big ( [0,1] \big )$.
\end{Theorem}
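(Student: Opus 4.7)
The plan is to establish the cycle $\ref{itPica}\Rightarrow\ref{itFVP}\Rightarrow\ref{itLVP}\Rightarrow\ref{itContLVP}\Rightarrow\ref{itPica}$, each step of which can be read off directly from results proved earlier in the paper; there is no new content to introduce. For $\ref{itPica}\Rightarrow\ref{itFVP}$ I would invoke Theorem \ref{thm-CritPtsExist}\ref{CritPtsExistThree}, which asserts exactly that $\pica$ proves the free variational principle for arbitrary complete separable metric spaces and arbitrary potentials. The implication $\ref{itFVP}\Rightarrow\ref{itLVP}$ is Lemma \ref{lemmFVPtoLVP}, applied uniformly to every $\spc X$. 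The implication $\ref{itLVP}\Rightarrow\ref{itContLVP}$ is immediate, since continuous potentials form a subclass of all potentials. For the final arrow $\ref{itContLVP}\Rightarrow\ref{itPica}$, I would specialize hypothesis $\ref{itContLVP}$ to continuous potentials on the space $\mathcal C\bigl([0,1]\bigr)$ and apply Proposition \ref{propLVPReverse}\ref{itLVPReversePica}, which gives $\pica$.

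For the ``moreover'' clause concerning the Baire space, the forward directions $\pica\Rightarrow\ref{itFVP}$ and $\pica\Rightarrow\ref{itLVP}$ restrict Theorem \ref{thm-CritPtsExist}\ref{CritPtsExistThree} and Lemma \ref{lemmFVPtoLVP} to $\spc X=\Nb^\Nb$. Conversely, the Baire-space $\fvp$ already implies $\pica$ by Proposition \ref{prop:critical-vs-Pi11CA-RCA}, and the Baire-space $\lvp$ trivially implies the Baire-space $\fvp$ (a localized critical point is in particular a critical point), closing the loop. For the clause about $\mathcal C\bigl([0,1]\bigr)$, the direction $\pica\Rightarrow\ref{itFVP},\ref{itContLVP},\ref{itLVP}$ again uses Theorem \ref{thm-CritPtsExist}\ref{CritPtsExistThree} together with Lemma \ref{lemmFVPtoLVP}. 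For the reversal, the $\fvp$ on $\mathcal C\bigl([0,1]\bigr)$ implies the $\lvp$ on the same space by Lemma \ref{lemmFVPtoLVP}, which implies the $\lvp$ restricted to continuous potentials, which yields $\pica$ by Proposition \ref{propLVPReverse}\ref{itLVPReversePica}. The analogous statements with $\mathcal C\bigl([0,1]\bigr)$ replaced by its closed unit ball follow from Remark \ref{RemUnitBall}.

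No step presents a real technical obstacle; this is a summary theorem assembling facts already in hand. The only point that warrants care is ordering the ``moreover'' arguments so that the strongest reversal (Proposition \ref{propLVPReverse}\ref{itLVPReversePica}, which applies to the \emph{continuous} $\lvp$ on $\mathcal C\bigl([0,1]\bigr)$) is used as the common endpoint, so that it services simultaneously the $\fvp$, the $\lvp$, and the continuous $\lvp$ on that space via the trivial inclusions and Lemma \ref{lemmFVPtoLVP}.
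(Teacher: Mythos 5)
Your proposal is correct and follows essentially the same route as the paper: the cycle \ref{itPica}$\Rightarrow$\ref{itFVP}$\Rightarrow$\ref{itLVP}$\Rightarrow$\ref{itContLVP}$\Rightarrow$\ref{itPica} via Theorem \ref{thm-CritPtsExist}\ref{CritPtsExistThree}, Lemma \ref{lemmFVPtoLVP}, the trivial inclusion, and Proposition \ref{propLVPReverse}\ref{itLVPReversePica} (with Remark \ref{RemUnitBall}), plus Proposition \ref{prop:critical-vs-Pi11CA-RCA} for the Baire-space refinement. The only difference is that you spell out the ``moreover'' bookkeeping more explicitly than the paper does, which is harmless.
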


\begin{proof}
The equivalence between \ref{itPica} and \ref{itFVP} is given by Theorem \ref{thm-CritPtsExist} and Proposition~\ref{prop:critical-vs-Pi11CA-RCA}.  That \ref{itFVP} implies \ref{itLVP} is Lemma \ref{lemmFVPtoLVP}. Clearly, \ref{itLVP} implies \ref{itContLVP}, and the latter implies \ref{itPica} by Proposition \ref{propLVPReverse} (with Remark \ref{RemUnitBall}).
\end{proof}

We have also considered variants of the variational principle where $f$ is bounded, but this restriction has not affected the proof-theoretic strength of the theorem; indeed, in each of these theorems we may additionally assume that $f$ is bounded.

\section*{Acknowledgments}
We thank our anonymous reviewers for their many helpful suggestions for improving the clarity of this work.  This project was partially supported by the \emph{Fonds voor Wetenschappelijk Onderzoek -- Vlaanderen} Pegasus program, a grant from the John Templeton Foundation (``A new dawn of intuitionism: mathematical and philosophical advances'' ID 60842), JSPS KAKENHI (grant numbers 16K17640 and 15H03634), JSPS Core-to-Core Program (A.~Advanced Research Networks) and  JAIST Research Grant 2018(Houga).  The opinions expressed in this work are those of the authors and do not necessarily reflect the views of the John Templeton Foundation.

\bibliographystyle{amsplain}
\bibliography{RMFixedPointTheorems}

\end{document}